\newtheorem{theorem}{Theorem}[section]
\newtheorem{corollary}[theorem]{Corollary}
\newtheorem{addendum}[theorem]{Addendum}
\newtheorem{lemma}[theorem]{Lemma}
\newtheorem{proposition}[theorem]{Proposition}
\newtheorem{prop}[theorem]{Proposition}
\theoremstyle{definition}
\newtheorem{remark}[theorem]{Remark}
\newtheorem{example}[theorem]{Example}
\newtheorem{definition}[theorem]{Definition}
\def\G{\Gamma}
\def\R{\mathbb{R}}
\def\Z{\mathbb{Z}}
\def\Q{\mathbb{Q}}
\def\FF{\mathcal{AF}}
\def\AF{\mathcal{AF}}
\def\FFn{\mathcal{AF}_n}
\def\OF{\mathcal{OF}}
\def\GL{{\rm{GL}}}
\def\PGL{{\rm{PGL}}}
\def\aut{{\rm{Aut}}}
\def\Aut{{\rm{Aut}}}
\def\Out{{\rm{Out}}}
\def\autn{{\rm{Aut}}(F_n)}
\def\<{\langle}
\def\>{\rangle}
\def\D{\Delta}
\def\L{\Lambda}
\def\core{{\rm{core}}}
\def\injrad{{\rm{injrad }}}
\def\fold{{\rm{fold}}}
\def\sub{{\rm{sub}}}
\def\rk{{\rm{rank}}}
\def\rank{{\rm{rank}}}
\def\Lk{{\rm{Lk}}}
\def\E{{\rm{E}}}
\def\A{\mathcal{A}}
\def\O{\mathcal{O}}
\def\tits{{\rm{Tits}}}
\def\e{\varepsilon}
\title{Rigidity of the free factor complex}
\author[Bestvina and Bridson]{Mladen Bestvina and Martin R. Bridson}
\address{Department of Mathematics\\
  University of Utah\\
  Salt Lake City, UT 84103, USA}
\address{Mathematical Institute\\
Andrew Wiles Building\\ ROQ\\ Oxford\\ OX26GG\\ U.K.}
\begin{document}
\begin{abstract}{We establish the following non-abelian analogue of the Fundamental Theorem of Projective Geometry:
the natural map from $\autn$ to the automorphism
group of the free-factor complex $\FF_n$  is an isomorphism. We also prove the corresponding 
theorem for the action of $\Out(F_n)$ on the complex of conjugacy classes of free factors.} 
\end{abstract}

\maketitle

\section{Introduction}   

Our purpose in this article is to describe the symmetries of the complex of free factors $\FF_n$ associated to a 
finitely generated free group $F_n$. We shall prove that the natural map from $\autn$ to the automorphism
group of $\FF_n$  is an isomorphism.  We shall also prove the corresponding 
theorem for the action of $\Out(F_n)$ on the complex of conjugacy classes of free factors. These results
can be viewed as  non-abelian analogues of the Fundamental Theorem of Projective Geometry, as we shall now explain. 

The Fundamental Theorem of Projective Geometry \cite{vonS} 
establishes that, for any field $K$,  
the only bijections of a projective space over $K$  
that preserve incidence relations are the natural ones, i.e. combinations of field automorphisms
and projective-linear maps. This can be rephrased 
in terms of the {\em Tits building} $\tits_n^<(K)$, 
which is the poset of proper non-trivial subspaces of $K^n$. If $K=\Q$ then there are no field automorphisms 
and the theorem tells us that the natural map $\PGL(n,\Q)\to \aut(\tits_n^<(\Q))$ is an isomorphism provided $n\ge 3$.  
The geometric realisation $\tits_n(\Q)$ of this poset has an additional symmetry: 
its group of simplicial automorphisms  
is $\PGL(n,\Q)\rtimes\Z/2$,  with the generator of $\Z/2$ swapping each vertex $V$ with $V^\perp$,
where the orthogonal complement is taken with respect to an inner product on $\Q^n$. (This is an anti-isomorphism of
the poset $\tits_n^<(\Q)$.)

The inclusion $\Z^n\hookrightarrow\Q^n$ induces an isomorphism $\mathcal{D}_n(\Z)\to\tits_n^<(\Q)$, where 
$\mathcal{D}_n(\Z)$ is the poset of proper direct factors of $\Z^n$, ordered by inclusion. Passing from the free abelian group $\Z^n$ to the non-abelian free group $F_n$, the natural analogue of $\mathcal{D}_n(\Z)$ is the poset of non-trivial proper {\em free factors} of $F_n$, ordered by inclusion. We shall work with the geometric realisation of this poset, which we denote by $\FFn$.
This complex was introduced by Allen Hatcher and Karen Vogtmann \cite{HV, HV2} who used it to study the
cohomology of $\autn$; they proved, in analogy with the Solomon-Tits theorem for $\tits_n(\Q)$, that $\FFn$
has the homotopy type of a wedge of spheres of dimension $n-2$.

As in the classical case, one has to assume $n\ge 3$ in order to obtain the desired rigidity for the automorphism group of 
this complex.

\begin{theorem}\label{thm1}
For $n\geq 3$ the natural homomorphism $\aut(F_n)\to \aut(\FF_n)$ is an isomorphism. 
\end{theorem}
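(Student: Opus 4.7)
The plan is to prove both injectivity and surjectivity of the natural homomorphism $\Psi : \aut(F_n) \to \aut(\FFn)$.

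For injectivity, suppose $\phi \in \ker\Psi$. Then $\phi$ setwise fixes each rank-$1$ vertex $\<a\>$, so $\phi(a)\in\{a, a^{-1}\}$ for every primitive $a\in F_n$. Fix a basis $a_1,\dots,a_n$ and write $\phi(a_i)=a_i^{\e_i}$. Since $a_ia_j$ is primitive for $i\neq j$, we get $\phi(a_ia_j)=a_i^{\e_i}a_j^{\e_j}=(a_ia_j)^{\pm 1}$; as a reduced word the right-hand side is either $a_ia_j$ or $a_j^{-1}a_i^{-1}$, forcing $\e_i=\e_j=1$. Hence $\phi=\mathrm{id}$.

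Surjectivity is the main content. Given $\Phi \in \aut(\FFn)$, the plan is to construct $\phi \in \aut(F_n)$ inducing $\Phi$ in three stages. (i) \emph{Rank preservation:} the rank of a vertex $A$ equals one plus the length of the longest descending chain below it in the poset, a purely combinatorial invariant, so $\Phi$ preserves ranks. (ii) \emph{Apartments:} for each basis $\{a_1,\dots,a_n\}$ of $F_n$, the associated \emph{apartment} $\A(a_1,\dots,a_n)\subset\FFn$ is the full sub-poset on vertices $\<a_i : i \in I\>$ with $\emptyset\neq I\subsetneq\{1,\dots,n\}$; its geometric realisation is the barycentric subdivision of $\partial\Delta^{n-1}$. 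Give a combinatorial characterisation of apartments purely in terms of $\FFn$, so that $\Phi$ permutes apartments and hence sends bases of $F_n$ (viewed as unordered $n$-tuples of rank-$1$ vertices) to bases. (iii) \emph{Reconstruction:} fix a basis $\{a_1,\dots,a_n\}$ and lift each $\Phi(\<a_i\>)$ to a primitive $\alpha_i$. By (ii), $\{\alpha_1,\dots,\alpha_n\}$ is a basis, so $a_i\mapsto\alpha_i$ defines $\phi\in\aut(F_n)$. After replacing $\Phi$ by $\Psi(\phi)^{-1}\Phi$, we may assume $\Phi$ fixes the apartment on $\{a_i\}$ pointwise; tracking the images of arbitrary free factors via chains of Nielsen-type moves between adjacent apartments (each of which is detectable inside $\FFn$) then forces $\Phi=\mathrm{id}$.

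The main obstacle is stage (ii), the intrinsic recognition of apartment structures, which is also where the hypothesis $n\ge 3$ is used. My expectation is that this is carried out by induction on $n$: for $n\geq 4$, the subcomplex of vertices strictly below a corank-$1$ factor $A$ is canonically isomorphic to $\FF(A)\cong\FF_{n-1}$, so $\Phi$ restricts to an automorphism there, coming inductively from some $\psi_A\in\aut(A)$; compatibility of the $\psi_A$ as $A$ varies over corank-$1$ factors then pins $\Phi$ down. The base case $n=3$, where $\FF_2$ degenerates to a discrete vertex set and the induction breaks, must be handled by a direct analysis of the bipartite graph $\FFn$, using that the rank-$1$ vertices below each rank-$2$ factor $B\cong F_2$ are naturally parametrised by the vertices of the Farey graph; matching this structure across the different $B$ should supply the rigidity needed to conclude.
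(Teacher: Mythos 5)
Your outline matches the paper's at the level of headings (rank preservation, apartment recognition, propagation from a fixed apartment), and your injectivity argument is fine, but two of your three stages contain genuine gaps. Stage (i) fails as stated: the rank of a vertex is \emph{not} recoverable as ``one plus the length of the longest descending chain below it,'' because $\Phi$ is only a simplicial automorphism of the order complex, which records chains as unordered simplices and forgets the direction of the partial order. A priori $\Phi$ could reverse or scramble the order --- exactly as the involution $V\leftrightarrow V^\perp$ does for $\tits_n(\Q)$ --- so the longest chain through a vertex cannot distinguish rank $1$ from rank $n-1$. The paper spends all of Section \ref{s:rank} on this: links of vertices of intermediate rank are joins while links of rank $1$ and rank $n-1$ vertices are not (a diameter computation), and then rank $1$ is separated from rank $n-1$ by a metric asymmetry (Lemma \ref{l:link-antip} versus Proposition \ref{p:extend}, the latter resting on Lemma \ref{l:ranky}). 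None of this is ``purely combinatorial'' in your sense.

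Stage (ii) is where the real content lives, and your proposal does not engage with the central obstruction: $\FF_n$ contains many \emph{fake apartments}, i.e.\ subcomplexes abstractly isomorphic to the barycentric subdivision of $\partial\Delta^{n-1}$ with the correct ranks that are not spanned by a basis (Figure \ref{f:fake} and Section \ref{s:fakes}). Consequently no characterisation by local combinatorial type can succeed; the paper instead proves the Antipode Lemma (Theorem \ref{p:antipode}), a metric characterisation of pairs $A,\<u\>$ with $A\ast\<u\>=F_n$, whose proof requires subfactor projections, the hyperbolicity of $\OF_{n-1}$, and fully irreducible automorphisms. Your induction on links of corank-$1$ factors is indeed how the paper reduces to $n=3$ (Proposition \ref{p:standard}), but both the base case and the inductive step need the Antipode Lemma, and your Farey-graph sketch for $n=3$ does not supply a substitute. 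Finally, in stage (iii), arranging that $\Phi$ fixes a standard apartment pointwise does not pin $\Phi$ down: the signed permutations $a_i\mapsto a_i^{\pm1}$ fix $\Delta(a_1,\dots,a_n)$ pointwise. The paper resolves this by enlarging the apartment with \emph{sticks} and \emph{supersticks} (Proposition \ref{p:fix-sticks}, Lemma \ref{l2}) before propagating to Nielsen-adjacent apartments; without that enlargement the propagation step you describe does not go through.
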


Note in particular that every automorphism of $\FF_n$ preserves the type of each
vertex, i.e. the rank of each  free factor;  there is no equivalent of the involution $V\leftrightarrow V^\perp$ 
of $\tits_n(\Q)$. 

We also prove a version of the above theorem for $\Out(F_n)$. In this case, the natural analogue of $\tits_n(\Q)$ is
the geometric realisation  $\OF_n$ of the poset of conjugacy classes of
non-trivial proper free factors in $F_n$, i.e. the quotient $\FF_n/{\rm{Inn}}(F_n)$. 
The large-scale geometry of $\OF_n$ was elucidated by Bestvina and Feighn \cite{BF}, who proved that it is 
a space of infinite diameter that is hyperbolic in the
sense of Gromov. 

\begin{theorem}\label{thm2}
For $n\geq 3$ the natural homomorphism $\Out(F_n)\to \Aut(\OF_n)$ is an
isomorphism. 
\end{theorem}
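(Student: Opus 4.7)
The natural homomorphism is defined as follows: since $\mathrm{Inn}(F_n)$ is normal in $\Aut(F_n)$ and acts trivially on the quotient $\OF_n=\FF_n/\mathrm{Inn}(F_n)$, the simplicial $\Aut(F_n)$-action on $\FF_n$ descends to an action of $\Out(F_n)$ on $\OF_n$. The task is to verify that the resulting homomorphism $\Out(F_n)\to\Aut(\OF_n)$ is both injective and surjective.

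For injectivity, suppose $\phi\in\Aut(F_n)$ sends every proper non-trivial free factor of $F_n$ to a conjugate of itself; I wish to show $\phi$ is inner. I plan to proceed by successive inner-automorphism adjustments. First, composition with an inner automorphism lets me arrange $\phi(H)=H$ for a fixed rank-$(n-1)$ free factor $H=\langle x_1,\dots,x_{n-1}\rangle$. The induced automorphism $\phi|_H$ still preserves the $F_n$-conjugacy class of each free factor of $H$, and an inductive reduction on $n$ (together with further inner adjustments by elements of $H$) yields $\phi|_H=\mathrm{id}_H$. The remaining freedom is the value of $\phi(x_n)$; the constraints $\phi(\langle x_n\rangle)\sim\langle x_n\rangle$ and $\phi(\langle x_i,x_n\rangle)\sim\langle x_i,x_n\rangle$ for the various $i<n$ (non-trivially many when $n\geq 3$) then combine to force $\phi(x_n)=x_n$, giving $\phi=\mathrm{id}$.

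For surjectivity, given $\psi\in\Aut(\OF_n)$, I would construct a simplicial lift $\tilde\psi\in\Aut(\FF_n)$ and then invoke Theorem~\ref{thm1} to realise $\tilde\psi$ by an element of $\Aut(F_n)$ projecting to $\psi$. To build $\tilde\psi$, I would fix a basepoint maximal simplex $\sigma_0$ of $\FF_n$ and choose any lift $\tilde\psi(\sigma_0)\in p^{-1}(\psi(p\sigma_0))$, where $p\colon\FF_n\to\OF_n$ is the quotient. For any other simplex $\tau$ of $\FF_n$, I would connect $\sigma_0$ to a maximal simplex containing $\tau$ via a chain of face-adjacent maximal simplices, project the chain to $\OF_n$, apply $\psi$, and lift the resulting chain back to $\FF_n$ step by step starting from $\tilde\psi(\sigma_0)$; the terminal value defines $\tilde\psi(\tau)$.

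The main obstacle is proving that this lift is well-defined, i.e.\ independent of the chosen chain from $\sigma_0$ to $\tau$. This reduces to showing that the monodromy of the quotient $p$ around loops in $\OF_n$ is trivial, which is delicate because $p$ is not a covering map---the $F_n$-stabilizer of a vertex $[A]$ is the normalizer $N_{F_n}(A)=A$, which is non-trivial. I would establish triviality of the monodromy by combining a combinatorial connectivity property of $\OF_n$, which reduces loops to a generating family of short combinatorial loops, with a careful analysis of how stabilizers fit together along such short loops, exploiting in particular that the stabilizer of an ascending chain of free factors is contained in the smallest term of the chain and that a chain of inclusions rigidifies admissible choices of representatives. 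Applying the whole argument to $\psi^{-1}$ in parallel then shows $\tilde\psi$ is bijective, completing the construction.
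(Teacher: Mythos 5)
Your surjectivity argument has a genuine gap at exactly the step you flag as the ``main obstacle,'' and the strategy you propose for closing it rests on a false premise. The quotient $p\colon\FF_n\to\OF_n$ is the quotient by ${\rm{Inn}}(F_n)\cong F_n$, and this group acts on the connected complex $\FF_n$ with connected quotient; consequently, for \emph{any} $g\in F_n$ not stabilizing $\sigma_0$, a path in $\FF_n$ from $\sigma_0$ to $g\sigma_0$ projects to a loop in $\OF_n$ whose lift at $\sigma_0$ is not a loop. So the monodromy of $p$ is emphatically non-trivial (it is essentially all of $F_n$ modulo small stabilizers), and your chain-lifting recipe is not well-defined even for $\psi=\mathrm{id}$: routing a chain from $\sigma_0$ to $\tau$ through $g\sigma_0$ produces a lift terminating at $g\tau$ rather than $\tau$. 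What would actually be needed is not triviality of the monodromy but that $\psi$ \emph{intertwines} the monodromy data of $p$ -- and establishing that is essentially equivalent to showing that $\psi$ comes from $\Out(F_n)$, i.e.\ it is the whole theorem. This is why the paper does not attempt to lift to $\FF_n$ and quote Theorem \ref{thm1}: rigidity of a quotient does not follow formally from rigidity of the total space. Instead the paper redevelops the entire argument directly in $\OF_n$ (rank recognition, the $\OF_n$ Antipode Lemma, and -- the genuinely new difficulty -- recognition of standard apartments via Proposition \ref{apartments} and Lemma \ref{l:build-up}, since apartments in $\OF_n$ are no longer determined by their rank-$1$ vertices; see Example \ref{ex:bad}), followed by the stick/superstick propagation of Lemmas \ref{o:l1}--\ref{one off}.

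Your injectivity sketch also has a concrete hole: after arranging $\phi|_H=\mathrm{id}$ for $H=\<x_1,\dots,x_{n-1}\>$, the constraints you invoke ($\phi\<x_n\>\sim\<x_n\>$ and $\phi\<x_i,x_n\>\sim\<x_i,x_n\>$) do \emph{not} force $\phi(x_n)=x_n$: the automorphism fixing each $x_i$ ($i<n$) and sending $x_n\mapsto x_n^{-1}$ satisfies all of them yet is not inner. To rule it out you must use further free factor classes, e.g.\ $[\<x_ix_n\>]$, whose image $[\<x_ix_n^{-1}\>]$ is distinct. This part is repairable (the fact that an automorphism preserving the conjugacy class of every free factor is inner is true and can be proved along the lines you indicate), but as written the claimed conclusion does not follow from the listed constraints.
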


A key similarity between $\tits_n(\Q)$, on the one hand, and $\FF_n$ and
$\OF_n$ on the other, is that each is composed of {\em{standard apartments}}. In the case of
$\tits_n(\Q)$, such an apartment is the full subcomplex whose vertices represent
the subspaces spanned by the proper, non-empty subsets of a basis for $\Q^n$.
A standard apartment in $\FF_n$ is defined in much the same way, taking the free factors
 spanned by the non-empty proper subsets of a basis.  
In each case, an apartment is simplicially isomorphic to the
barycentric subdivision of the boundary of an $(n-1)$-simplex. 

There are also important differences between $\tits_n(\Q)$ and $\FF_n$. The
former is a spherical building of
diameter 3, while $\FF_n$ has infinite diameter. From a technical
point of view, a major difficulty in understanding the automorphisms of $\FF_n$ comes from the fact that, in 
contrast to $\tits_n(\Q)$,
there are many ``fake apartments'' in $\FF_n$, i.e. subcomplexes abstractly
isomorphic to the 
barycentric subdivision of the boundary of an $(n-1)$-simplex that are
not standard apartments (Section \ref{s:fakes}).

The first stage in our proof of  Theorem \ref{thm1} involves establishing another difference, to which we have already alluded:
every simplicial automorphism of $\FF_n$ preserves the partial ordering on the vertex set, i.e. the rank of  free
factors; this is achieved in Section \ref{s:rank}.

Our aim in the  second stage of the proof  (Section \ref{s:standard}) 
is to show that standard apartments can be recognized intrinsically: they can be distinguished from fake apartments by metric properties of their neighbourhoods. From this it follows that the set of standard apartments
is preserved by all automorphisms of $\FF_n$.
The key technical result in this part of the proof is the {\em Antipode Lemma} (Theorem \ref{p:antipode}), which 
provides an intrinsic (metric) characterisation of pairs of vertices $A,L$ such that $A\ast L =F_n$. 

In the third stage of the proof, working outwards from a fixed standard apartment,
we consider adjacent apartments that have large overlaps. 
A key role is played in this part of the argument by {\em sticks} --  
certain rank 1 factors that, when gathered in appropriate families,  
provide rigid, highly-symmetric frames controlling 
large overlaps between apartments (see Section \ref{s:sticks}).  

With these tools in hand, the final step in our proof is straightforward: $\aut(F_n)$ 
acts transitively on the set of standard apartments,
preserving the rank of vertices,
so by composing an arbitrary automorphism $\Phi$ of $\FF_n$ with a suitable element of $\aut(F_n)$, we may assume that
$\Phi$ fixes a standard apartment; we argue that one can compose with a further element of $\aut(F_n)$ to ensure
that $\Phi$ fixes the apartment and all of the adjacent sticks pointwise; this forces $\Phi$ to fix the neighbouring apartments
and their sticks pointwise (Proposition \ref{p:fix-sticks}), and by propagation $\Phi$ is forced to be the identity everywhere.

Our proof of Theorem \ref{thm2}  follows the same outline but there are some additional difficulties to address, 
notably that it is harder to recognise standard apartments, which are no longer uniquely determined by their
rank 1 vertices.

The parallel that we focussed on to motivate Theorem \ref{thm1} compared
$\FF_n$  to  $|{\mathcal{D}}_n(\Z)| \cong \tits_n(\Q)$. This
is a facet of the powerful 3-way analogy between automorphism groups of free groups, lattices such as
${\rm{SL}}(n,\Z)$, and mapping class groups of surfaces of finite type \cite{mb-icm, BV-survey}. In this grand analogy,
the object corresponding to
$\FFn$ and $\OF_n$  in the setting of mapping class groups is the curve complex \cite{harvey}.  Ivanov \cite{ivan} proved the analogue of Theorems \ref{thm1} and \ref{thm2} in this setting: the natural map from the extended mapping class group of a surface of finite type  to the group of simplicial
automorphisms of the corresponding curve complex is an isomorphism (with some exceptions for small surfaces -- cf.~\cite{kork}, \cite{luo}).

Ivanov used his theorem to deduce that the extended mapping class group of a 
surface of finite type is equal to its own abstract commensurator (with the same exceptions for small surfaces).
In connection with this,  
we should comment on the fact that $\Aut(\FF_n)$ is $\Aut(F_n)$,
whereas $\Aut(\mathcal{D}_n(\Z))$ is ${\PGL}(n,\Q)$ not  ${\PGL}(n,\Z)$. This difference can be interpreted as a 
manifestation of the fact that ${\GL}(n,\Q)$ is the abstract commensurator of ${\GL}(n,\Z)$.  
In contrast, commensurations of $\Aut(F_n)$ (i.e.~isomorphisms between subgroups of finite index)
are as restricted as they are in the mapping class group case: 
Bridson and Wade \cite{BW} prove that the action of $\Aut(F_n)$ on $\FF_n$ extends to a faithful action  by 
 ${\rm{Comm}}(\Aut(F_n))$,  
and it then follows from Theorem \ref{thm1}  that $\Aut(F_n)={\rm{Comm}}(\Aut(F_n))$. 
The corresponding result for $\Out(F_n)$ is due to Farb and Handel \cite{FarbH} for $n\ge 4$ 
and to Horbez and Wade \cite{HW2} for $n\ge 3$ (with proofs that do not follow the template we have described).

Theorems \ref{thm1} and \ref{thm2} also extend the range of faithful geometric models  
for $\Aut(F_n)$ and $\Out(F_n)$ --  by which we mean spaces $X$  where a
natural action induces an isomorphism $\Aut(F_n)\to {\rm{Aut}}(X)$ or $\Out(F_n)\to {\rm{Aut}}(X)$.
The first such rigidity result was proved by Bridson and Vogtmann, who showed that $\Out(F_n)$ is the group
of simplicial automorphisms of the spine of Outer space \cite{BV-duke}. Other such spaces $X$ include  
the simplicial closure of Outer space \cite{AS}, the free and cyclic splitting complexes \cite{AS, HW1}, 
and Outer space endowed with the Lipschitz metric \cite{franco}. This last result, due to 
Francaviglia and Martino, is the natural analogue
of Royden's theorem on the isometries of Teichm\"{u}ller space \cite{royden}, which was reproved by
Ivanov \cite{ivan} using the rigidity of the curve complex (the analogue of Theorem \ref{thm1}), with an argument 
modelled on the proof of Mostow rigidity in higher rank \cite{mostow}, which in turn relies on understanding  
the automorphisms of spherical buildings such as $\tits_n(\R)$, which
is where we  began.

{\bf Acknowledgements.} The first author gratefully acknowledges the
support by the National Science Foundation under grant number
DMS-1905720.

\section{Background and Preliminaries}

We shall assume that the reader is familiar with basic algebraic facts about free groups and
their subgroups. For example, if $L<F_n$ is a free factor and $H<F_n$ then $H\cap L$ is a free factor of $H$; in particular any intersection of free factors in $F_n$ is a free factor. 

Throughout this paper we shall explore subgroups of free groups by working with labeled graphs that represent them. 
In this section we gather a range of facts that we shall need concerning these graphical representations.

\subsection{Labeled graphs and Stallings folds}

We fix a basis $\{a_1,\dots,a_n\}$ for $F_n$ and identify $F_n$ with the fundamental group of the rose $R_n$,
which is a graph\footnote{we allow graphs to have multiple edges and loops}
 with one vertex $v$ and $n$ edges, directed and labeled $a_1,\dots,a_n$. The length of a word 
 $w$ in the letters $a_i^{\pm 1}$  (equivalently, an edge path in
 $R_n$) will be denoted by $|w|$. A {\it morphism} of
 graphs is a continuous map that sends vertices to vertices and
 edges to edges.
Formally, a {\em labeled graph} is a morphism of 
graphs $\lambda:\G\to R_n$;  
in practice, we  regard $\G$ as a graph in which the edges have been oriented and labeled by letters $a_i$
so that $\lambda$ preserves the orientation and labeling. Given $H<F_n$, the {\em pointed} $\core_*(H)$
is the labeled graph obtained by restricting the (based) covering map $(\widetilde{R}_n,\ast)/H\to (R_n,v)$ to the minimal connected subgraph containing all the embedded loops and the basepoint, while the
(unpointed)  $\core(H)\subset \core_*(H)$ is the
minimal connected subgraph containing all the embedded loops. $H_1$ is conjugate to $H_2$ if and only if
$\core(H_1)=\core(H_2)$.

If a pair of directed edges $e, e'$ in a labeled graph $\G$ have the
same label and the same initial (resp. terminal) vertex, then the
morphism of labeled graphs $\G\to \G'$ that identifies these edges
and their terminal (resp. initial) vertices is called a Stallings {\em
  fold}, \cite{stallings}. Any morphism of finite graphs can be
expressed as a finite sequence of folds followed by an immersion
(locally injective map).  There is a unique graph $\fold(\G)$ obtained from $\G$
by a maximal sequence of folds; such a graph is said to be {\em fully
  folded}; its labeling map $\fold(\G)\to R_n$ is an immersion.

We say that a labeled graph with basepoint $(\G,\ast)$ {\em supports} 
a subgroup $K<F_n$ if $K$ is contained in the $\pi_1$-image of the
labeling map $\G\to R_n$.  

For labeled graphs $\G_1$ and $\G_2$  with basepoints, 
$\G_1\vee\G_2$ will denote the labeled graph obtained from $\G_1\sqcup \G_2$ by identifying the basepoints.
We refer to $\G_1\vee\G_2$  as the {\em wedge} of $\G_1$ and $\G_2$. 
If $\G_1=\core_*(H_1)$ and $\G_2=\core_*(H_2)$, then $\fold(\G_1\vee\G_2)=\core_*\<H_1,H_2\>$. 
The following special case of this observation will be useful.

\begin{lemma} A subgroup
$H<F_n$ of rank $k$ is a free factor if and only if there is a labeled graph $\G$ of rank $(n-k)$
such that $\core_*(H)\vee \G$ folds to $R_n$.  
\end{lemma}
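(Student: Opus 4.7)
The plan is to prove both directions using the identity $\fold(\core_*(H_1)\vee\core_*(H_2))=\core_*\<H_1,H_2\>$ recalled just above the lemma, combined with an Euler characteristic (rank) accounting that promotes the folding map to a $\pi_1$-isomorphism.

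For the ``only if'' direction, I would suppose $H$ is a free factor of rank $k$, so that $F_n=H\ast K$ with $K$ of rank $n-k$, and take $\G=\core_*(K)$, a labeled graph of rank $n-k$. The identity above then gives
$\fold(\core_*(H)\vee\G)=\core_*\<H,K\>=\core_*(F_n)=R_n$,
as required.

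For the ``if'' direction, suppose $\G$ has rank $n-k$ and $\core_*(H)\vee\G$ folds to $R_n$. A rank count shows $\core_*(H)\vee\G$ has rank $k+(n-k)=n$, the same as $R_n$. Every Stallings fold either preserves the first Betti number (when the two edges being identified have distinct terminal vertices, in which case the fold is a homotopy equivalence) or decreases it by one (when the terminal vertices coincide, in which case a loop is killed). Since the initial and final ranks agree, every fold in the sequence must be of the first kind, so the composite map $\core_*(H)\vee\G\to R_n$ is a homotopy equivalence.

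This upgrades the induced map $\pi_1(\core_*(H)\vee\G)\to F_n$ to an isomorphism. Since $\pi_1(\core_*(H)\vee\G)=H\ast\pi_1(\G,\ast)$ and the restriction to $\core_*(H)$ is the inclusion $H\hookrightarrow F_n$, writing $K$ for the image of $\pi_1(\G,\ast)$ in $F_n$, we obtain $F_n=H\ast K$, exhibiting $H$ as a free factor. The main---and really only---obstacle is the rank bookkeeping that turns the sequence of folds from a mere $\pi_1$-surjection (which is automatic from $\fold(\core_*(H)\vee\G)=R_n$) into a $\pi_1$-isomorphism; without this upgrade one would only conclude $\<H,K\>=F_n$, not $F_n=H\ast K$.
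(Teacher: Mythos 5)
Your proof is correct, and it supplies exactly the standard argument: the paper states this lemma without proof, presenting it as an immediate special case of the identity $\fold(\core_*(H_1)\vee\core_*(H_2))=\core_*\langle H_1,H_2\rangle$, and your Euler-characteristic bookkeeping (each fold preserves the first Betti number unless it identifies two edges with a common terminal vertex, in which case it drops by one) is precisely the missing step that upgrades the $\pi_1$-surjection to an isomorphism and hence yields $F_n=H\ast K$ rather than merely $\langle H,K\rangle=F_n$. No gaps.
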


The following well-known lemma is proved by observing how a graph of rank 1 can fold into  $\core(L_{n-1})$.

\begin{lemma}\label{complement} Let $L_{n-1}=\<a_1,\dots,a_{n-1}\>$. 
Then   $L_{n-1}*\<u\>=F_n$ if and only if $u=xa_n^{\pm 1}y$
for some $x,y\in L_{n-1}$. 
\end{lemma}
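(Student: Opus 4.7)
The ``if'' direction is direct. If $u = xa_n^{\varepsilon}y$ with $x, y \in L_{n-1}$ and $\varepsilon = \pm 1$, the endomorphism $\varphi$ of $F_n$ sending $a_i \mapsto a_i$ for $i < n$ and $a_n \mapsto u$ admits the explicit inverse $a_i \mapsto a_i$ ($i < n$), $a_n \mapsto x^{-1}a_n^{\varepsilon}y^{-1}$. Hence $\varphi \in \Aut(F_n)$, so $\{a_1, \ldots, a_{n-1}, u\}$ is a basis of $F_n$, which is precisely $L_{n-1} \ast \langle u \rangle = F_n$.

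For ``only if'', suppose $L_{n-1} \ast \langle u\rangle = F_n$. By the preceding lemma applied with $H = \langle u\rangle$ of rank $1$ and $\Gamma = R_{n-1} = \core_*(L_{n-1})$ of rank $n-1$, the wedge $C := \core_*(\langle u\rangle) \vee R_{n-1}$ folds onto $R_n$; here $\core_*(\langle u\rangle)$ is a cycle (or lollipop, if $u$ is not cyclically reduced) labelled by the reduced word for $u$. Write that reduced form as $u = w_0 a_n^{\varepsilon_1} w_1 \cdots a_n^{\varepsilon_k} w_k$ with each $w_i \in L_{n-1}$; the case $k = 0$ forces $u \in L_{n-1}$ and contradicts $\langle L_{n-1}, u\rangle = F_n$. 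The target is therefore to show $k = 1$.

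The plan is to run the Stallings folds explicitly. Since $w_0$ begins at the wedge vertex $v_0$ and $w_k$ ends there, their $L_{n-1}$-letters successively fold into the corresponding loops of $R_{n-1}$, identifying each of their vertices with $v_0$; this places one endpoint of the first $a_n$-edge $E_1$ and one endpoint of the last $a_n$-edge $E_k$ at $v_0$. The remaining $E_2, \ldots, E_{k-1}$ and $w_1, \ldots, w_{k-1}$ form a rank-$1$ subgraph attached to $v_0$ only through $E_1$ and $E_k$. Assume for contradiction $k \ge 2$: a case analysis on $\varepsilon_1, \varepsilon_k$ shows that either no fold at $v_0$ is possible (so $C$ is already fully folded with extra vertices), or $\varepsilon_1 \ne \varepsilon_k$ and the fold of $E_1$ and $E_k$ merges their other endpoints but leaves a strictly smaller rank-$1$ sub-configuration at the merged vertex on which the same obstruction recurs. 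In every case the process terminates with a fully-folded graph having at least one vertex distinct from $v_0$, contradicting $\fold(C) = R_n$. Hence $k = 1$, and the reduced form $u = w_0 a_n^{\varepsilon_1} w_1$ is the asserted $x a_n^{\pm 1} y$. The main obstacle is organising this cascade case analysis uniformly; the paper's hint (``observing how a graph of rank $1$ can fold into $\core(L_{n-1})$'') locates the structural point, namely that a rank-$1$ cycle attached to $v_0$ through two $a_n$-edges can collapse onto $R_{n-1}$ as a single $a_n$-loop only when it already consists of a single $a_n$-edge.
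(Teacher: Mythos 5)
Your strategy is the one the paper's hint points to (wedge the $u$-loop onto the rose $R_{n-1}$ and track the $a_n$-edges under folding), and your final conclusion in each case is correct, but there are two problems. The small one: your explicit inverse in the ``if'' direction fails when $\varepsilon=-1$. With $\varphi(a_n)=xa_n^{-1}y$ and $\psi(a_n)=x^{-1}a_n^{-1}y^{-1}$ one computes $\psi\varphi(a_n)=x\,(x^{-1}a_n^{-1}y^{-1})^{-1}y=xy\,a_n\,xy\neq a_n$ in general; the correct formula is $\psi(a_n)=y\,a_n^{-1}x$. (Simpler: $a_n=(x^{-1}uy^{-1})^{\varepsilon}$ shows that $\{a_1,\dots,a_{n-1},u\}$ generates $F_n$, and an $n$-element generating set of $F_n$ is a basis.)

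The substantive issue is the recursion in the case $\varepsilon_1\neq\varepsilon_k$. After $E_1$ and $E_k$ fold, the residual loop at the merged vertex $v_1$ is labelled by the reduced word $W'=w_1a_n^{\varepsilon_2}\cdots a_n^{\varepsilon_{k-1}}w_{k-1}$, whose extremal edges are in general labelled by letters of $L_{n-1}$, not by $a_n$. So ``the same obstruction'' does not recur: you cannot iterate the $\varepsilon_1$-versus-$\varepsilon_k$ dichotomy at $v_1$, and your claim that every branch of the cascade terminates away from $R_n$ is asserted rather than derived. What actually has to be checked is: (i) $W'\neq 1$ (if $W'=1$ then $k=2$, $w_1=1$ and $\varepsilon_2=-\varepsilon_1$, contradicting reducedness of $u$); (ii) the $W'$-loop at $v_1$ folds to a nonempty lollipop $zVz^{-1}$ with $V$ cyclically reduced and nonempty; and (iii) this lollipop creates no further fold with the surviving $a_n$-edge $v_0\to v_1$ or with the rose --- here reducedness of $u$ is used a second time, to rule out the first letter of $W'$ being $a_n^{-1}$ and its last letter being $a_n$ when $\varepsilon_1=+1,\ \varepsilon_k=-1$. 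Points (i)--(iii) guarantee that the fully folded graph retains $v_1\neq v_0$, which is the contradiction you want; without them the argument is a plausible sketch rather than a proof of the only nontrivial implication.
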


The following criterion for recognising factors of corank 1 will also be useful.

\begin{prop}\label{p:identify} If $H<F_n$ is a  free factor of rank $n-1$, then either
$\core_*(H)$ embeds in the rose $R_n$ or else the labeled graph obtained by identifying two
of its vertices folds to $R_n$.
\end{prop}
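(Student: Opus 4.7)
\medskip
\noindent\textbf{Proof plan.}
If $\core_*(H)$ has only one vertex it is a subrose of $R_n$ and embeds, giving the first alternative. So assume $K := \core_*(H)$ has at least two vertices. The plan is to use the preceding lemma to produce a rank-$1$ labelled graph $\Gamma$ such that $K \vee \Gamma$ folds to $R_n$, and then to extract from a Stallings fold sequence the desired pair of vertices of $K$.

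Choose such a $\Gamma$ with the minimum possible number of edges. Because $K$ is already folded (it is a core) and $\Gamma$ admits no rank-preserving internal fold (otherwise a smaller $\Gamma'$ would still satisfy $K \vee \Gamma' \to R_n$, contradicting minimality), and because a non-rank-preserving internal fold of $\Gamma$ would drop the rank below $n$ and prevent the sequence from ever reaching $R_n$, every fold in a Stallings fold sequence from $K\vee\Gamma$ to $R_n$ must pair a $K$-edge with a $\Gamma$-edge at a common vertex (initially only the basepoint $*$). Trace the fold sequence. Successive folds identify $\Gamma$-vertices with $K$-vertices in turn, effectively reading the generator of $\pi_1(\Gamma)$ along a path in $K$ starting at $*$. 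Since this generator complements $H$ and in particular is not itself a loop in $K$, the cycle of $\Gamma$ cannot close up inside $K$ alone, and so the propagation must eventually force two originally-distinct vertices $v_1, v_2$ of $K$ to be identified (possibly transitively through intermediate $\Gamma$-vertices). Let $v_1,v_2$ be such a pair at the first moment this happens.

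The remaining claim is that $K/(v_1\sim v_2)$ itself folds to $R_n$: after performing this single identification in $K$, the remaining folds of the original sequence can be replayed inside $K/(v_1\sim v_2)$, absorbing the edges of $\Gamma$ one-by-one into $K$-edges. The main obstacle is this last verification; the key is to show, using the minimality of $\Gamma$, that no further identification of originally-distinct $K$-vertices is needed to fully consume $\Gamma$ once $v_1\sim v_2$ has been imposed — otherwise a shorter $\Gamma'$ could be built by short-circuiting through $K$-paths, contradicting the minimal choice.
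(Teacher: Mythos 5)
Your strategy is the same as the paper's --- attach a rank-one complement at the basepoint, fold the wedge to $R_n$, and watch for the first moment two vertices of $K=\core_*(H)$ are identified --- but the step you yourself flag as ``the main obstacle'' is precisely the content of the proposition, and it is left unproved. What must be shown is that a \emph{single} identification $v_1\sim v_2$ suffices, i.e.\ that no second pair of original $K$-vertices needs to be identified while edges of $\Gamma$ are still present. Minimality of the edge-count of $\Gamma$ does not obviously forbid this: a fold sequence is a global object, and ``short-circuiting through $K$-paths'' to build a smaller $\Gamma'$ is not something your set-up lets you do without further argument. As written, the proof stops exactly where the real work begins.

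The way to close the gap is to control the \emph{shape} of the unabsorbed part of $\Gamma$ rather than its size, which is what the paper does. Take $\Gamma$ to be a single subdivided loop at $*$ labeled by a word $u$ with $H\ast\<u\>=F_n$ (one may take $u$ cyclically reduced). Since $K$ is already folded and the loop meets $K$ only at $*$, at every stage the unabsorbed portion of the loop is a single arc whose interior is disjoint from the image of $K$ and whose endpoints lie at $K$-vertices; a fold consuming a non-terminal edge of this arc identifies an interior arc-vertex with a $K$-vertex and therefore never identifies two original $K$-vertices. Only the fold consuming the \emph{last} edge of the arc can identify two $K$-vertices (and it must do so, since otherwise the loop would close up inside $K$, forcing $u\in H$). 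At that instant $\Gamma$ has been entirely absorbed, so the graph in hand is literally $K$ with one pair of vertices identified, and the remainder of the fold sequence carries it to $R_n$. This observation replaces both your minimality device and your unproved ``replay'' step; without it, or something equivalent, the argument is incomplete.
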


\begin{proof}
Choose $u\in F_n$ such that $H\ast\<u\>=F_n$. We add a loop
labeled $u$ to $\G=\core_*(H)$ at $*$ and start folding to obtain $R_n$. Initially, at every
step an edge of the $u$-loop folds with an edge of $\Gamma$. If the
process stops before the whole loop is folded in,  
$\core_*(H)$ embeds in $R_n$. Otherwise, when the last edge of the $u$-loop is
folded in, two vertices of  $\core_*(H)$ will be identified before the folding to $R_n$ continues.
\end{proof}

 \subsection{Concerning visible factors and powers}\label{s:2.2}
 
The following standard facts will be used without further comment throughout the paper; the second is
used in the proof of the lemma that follows.
 
\begin{enumerate} 
\item[•] If $H_1<H_2$ then there is a unique label-preserving immersion $\core_*(H_1)\to \core_*(H_2)$
restricting to an immersion $\core(H_1)\to \core(H_2)$.
\item[•] $u\in F_n$ is conjugate into $H<F_n$ if and only if there is an oriented loop in $\core(H)$ whose label
is a cyclically reduced word representing the conjugacy class of $u$; if $H$ is malnormal (e.g. a free factor) then there is a unique such loop (up to rotation).  
\end{enumerate}

We need an elaboration on the second point. To explain this, recall that 
the set $L_H$ 
of reduced words representing the elements of a finitely generated subgroup $H<F_n$ 
consists of the labels on the reduced edge paths
in $\core_*(H)$ that begin and end at the basepoint. This sits inside $\sub(H)$, the set of labels on all reduced paths
in $\core_*(H)$,
i.e. words $v$ such that some $uvw$ is a reduced word in $L_H$. Define
$$
\E_{a_i}(H) = \{ n \mid a_i^n \in  \sub(H)\}.
$$

\begin{lemma} If $H$ is a free factor, then
 $\E_{a_i}(H)$ is infinite if and only if $\core_*(H)$ has a loop labeled by the basis element $a_i$.
\end{lemma}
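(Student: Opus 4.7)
The plan is to prove the ``if'' direction by inspection and reduce the ``only if'' direction to the algebraic fact that free factors of free groups are root-closed.

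For $(\Leftarrow)$, if $\core_*(H)$ has an edge-loop at some vertex labeled $a_i$, then for every $n\geq 0$ traversing this loop $n$ times gives a reduced edge-path labeled $a_i^n$, so $\E_{a_i}(H)$ is infinite.

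For $(\Rightarrow)$, I would start from the fact that $\core_*(H)$ is a fully folded finite labeled graph, so at each vertex there is at most one outgoing and at most one incoming $a_i$-edge. Consequently the subgraph spanned by the $a_i$-edges is a disjoint union of simple paths and simple cycles, and the path components have bounded length. If $\E_{a_i}(H)$ is infinite then reduced paths in $\core_*(H)$ with label $a_i^n$ exist for arbitrarily large $n$, so finiteness of $\core_*(H)$ forces a cycle of $a_i$-edges, i.e.\ a reduced loop in $\core(H)$ labeled by some $a_i^m$ with $m\geq 1$. Such a loop witnesses that $a_i^m$ is conjugate into $H$, say $a_i^m\in gHg^{-1}$, so $(g^{-1}a_ig)^m\in H$.

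Here I would invoke the key algebraic input: a free factor $H$ of $F_n$ is \emph{root-closed}, i.e.\ $w^m\in H$ with $m\geq 1$ forces $w\in H$. This is proved by choosing a complement $F_n=H*K$ and using the retraction $r:F_n\to H$ that kills $K$: one has $r(w)^m=r(w^m)=w^m$, and $m$-th roots are unique in the free group $H$ because any $a,b$ with $a^m=b^m$ both lie in the centralizer of $a^m$, which is cyclic in a free group, forcing $a=b$. Applied to $w=g^{-1}a_ig$ this yields $g^{-1}a_ig\in H$, so $a_i$ is conjugate into $H$; by malnormality of free factors the corresponding cyclically reduced loop in $\core(H)$ is unique and has length one, giving the desired edge-loop labeled $a_i$. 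The only substantive point is root-closedness of free factors; once that is granted, the rest is a short combinatorial argument using only that $\core_*(H)$ is finite and folded.
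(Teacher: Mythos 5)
Your proof is correct, and it diverges from the paper's at the decisive algebraic step. Both arguments begin the same way: the "if" direction is immediate, and for "only if" both use finiteness (and foldedness) of $\core_*(H)$ to extract an embedded loop labeled $a_i^m$ for some $m\neq 0$. At that point the paper argues that an embedded loop in a core graph represents a primitive element of $H$ (it extends to a basis, by collapsing a suitable spanning tree), hence a primitive element of $F_n$ since $H$ is a free factor, and $a_i^m$ is primitive only when $|m|=1$; so the loop already found has length one. You instead note that the loop exhibits $a_i^m$ as conjugate into $H$ and invoke root-closedness of free factors, which you prove cleanly via the retraction onto $H$ and uniqueness of roots in free groups; you then get a (possibly different) loop labeled $a_i$ from the standard correspondence between conjugates into $H$ and cyclically reduced loops in $\core(H)$. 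Your route is more self-contained, since the paper leaves the primitivity of embedded loops as an unproved standard fact while you derive your key input from first principles; it is also slightly more general, applying verbatim to any retract of $F_n$ rather than only to free factors. The paper's route is shorter granted the standard fact and identifies the $a_i$-loop as the one already constructed. Either way the hypothesis that $H$ is a free factor is essential (the statement fails for $H=\langle a_i^2\rangle$), and both proofs use it correctly.
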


\begin{proof}
Any edge path in $\core_*(H)$ whose length exceeds the number of vertices will contain a loop, and a shortest such loop 
along the path will be embedded.
So if  $\E_{a_i}$ is infinite then $\core_*(H)$ contains 
an embedded loop labeled $a_i^m$ for some $m\neq 0$. This embedded loop represents the
conjugacy class of a primitive element,  so $|m|=1$. The converse is obvious.
\end{proof}

\begin{corollary}\label{c:loops}
Let $A< F_n$ be a free factor of  rank $n-1$. Then, either $\E_{a_i}(A)$ is finite for some $i\ge 2$, or else
$\core_*(A)$ is a tree with $n-1$ loops attached, labeled $a_2,\dots,a_n$. 
\end{corollary}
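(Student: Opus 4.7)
The plan is to combine the preceding lemma with malnormality of free factors and a Betti number count. Assume the second alternative, i.e.~$\E_{a_i}(A)$ is infinite for every $i \ge 2$. By the preceding lemma (and its proof, which shows that the embedded loop labeled $a_i^m$ satisfies $|m|=1$), $\core_*(A)$ then contains a self-loop edge labeled $a_i$ for each $i\ge 2$.

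I next want to argue that for each $i$ there is \emph{exactly} one such self-loop. At a single vertex, the immersion property forbids two outgoing $a_i$-edges, hence two $a_i$-self-loops based at one vertex. Two $a_i$-self-loops based at different vertices would be distinct embedded loops in $\core(A)$ both representing the conjugacy class of $a_i$ in $F_n$; since $A$ is malnormal (as a free factor), they in fact represent the same $A$-conjugacy class, and by the uniqueness bullet in Section \ref{s:2.2} any two such loops must coincide. So $\core_*(A)$ contains $n-1$ distinct self-loop edges, labeled $a_2,\dots,a_n$.

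Finally I run a Betti number count. The connected graph $\core_*(A)$ has first Betti number equal to the rank of $A$, namely $n-1$. Removing a self-loop does not disconnect the graph and drops the first Betti number by exactly one, so removing all $n-1$ of these self-loops leaves a connected graph with trivial $H_1$, i.e.~a tree. Hence $\core_*(A)$ is a tree with the self-loops labeled $a_2,\dots,a_n$ attached, as required. The main nontrivial step is the uniqueness of the $a_i$-self-loop; without it the Betti number count would fail to pin the structure of $\core_*(A)$ down exactly.
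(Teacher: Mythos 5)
Your proof is correct and follows essentially the same route as the paper: the preceding lemma supplies an embedded $a_i$-loop for each $i\ge 2$, and a first-Betti-number count against $\rank(A)=n-1$ forces the rest of the graph to be a tree. Your separate malnormality-based uniqueness step is harmless but redundant — the Betti count alone already rules out a second self-loop with any label, since $n$ self-loops in a connected graph would force $b_1\ge n$ — so, contrary to your closing remark, the rank count pins down the structure without it.
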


\begin{proof}
If $\E_{a_i}(A)$ is infinite for each  $i\ge 2$, then the lemma provides loops labeled $a_i$, and since the rank of $\core_*(A)$ is $n-1$, the remainder of the graph is a tree.
\end{proof}
 
\begin{lemma}\label{lift}
Let $V<F_{n}$  
be a free factor of rank $n-1$
and assume that both $\<a_3,\cdots,a_n\>$ and $\<a_2,\cdots,a_{n-1}\>$
  can be conjugated into $V$.
\begin{enumerate}[(1)]
\item If $n\geq 4$ then $V$ is conjugate to $\<a_2,\cdots,a_n\>$.
\item If $n=3$ then $V$ is conjugate to $\<a_2^{\gamma},a_3\>$ for some
  $\gamma\in F_3$.
\end{enumerate}
\end{lemma}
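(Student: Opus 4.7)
The plan is to use Corollary \ref{c:loops} to force $\core_*(V)$ to have a very restricted shape, and then to use the two conjugacy hypotheses to locate where the loops labeled $a_2,\dots,a_n$ attach inside $\core(V)$.

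First I would observe that the hypotheses imply $a_i$ is conjugate into $V$ for every $i\in\{2,\dots,n\}$: the indices $i\ge 3$ are covered by $\<a_3,\dots,a_n\>$ and the indices $i\le n-1$ by $\<a_2,\dots,a_{n-1}\>$. Since $V$ is a free factor (hence malnormal), the discussion in Section \ref{s:2.2} yields a unique embedded loop in $\core(V)$ labeled $a_i$ for each such $i$; in particular, $\E_{a_i}(V)$ is infinite for all $i\ge 2$. Corollary \ref{c:loops} then forces $\core_*(V)$ to be a tree with $n-1$ single-edge loops labeled $a_2,\dots,a_n$ attached.

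Next I would locate two distinguished vertices. For an appropriate conjugate $V'$ of $V$ containing $\<a_3,\dots,a_n\>$, the label-preserving immersion $\core_*(\<a_3,\dots,a_n\>)\to \core_*(V')$ sends the basepoint of the rose to some vertex $v_1\in\core(V)$ and each of its $n-2$ loops to the unique embedded loop in $\core(V)$ with that label; hence all of the loops labeled $a_3,\dots,a_n$ attach at $v_1$. Symmetrically, there is a vertex $v_2\in\core(V)$ at which all the loops labeled $a_2,\dots,a_{n-1}$ attach.

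For part (1), when $n\ge 4$, pick any index $i\in\{3,\dots,n-1\}$; the unique single-edge loop labeled $a_i$ attaches at a unique vertex, forcing $v_1=v_2$. Then all $n-1$ loops are based at the common vertex $v:=v_1=v_2$, and minimality of $\core(V)$ forces the tree part between loops to be trivial; so $\core(V)$ is the rose at $v$ with loops $a_2,\dots,a_n$, whence $V$ is conjugate to $\<a_2,\dots,a_n\>$. For part (2), when $n=3$, there is no shared index, and $\core(V)$ consists of the $a_3$-loop at $v_1$ and the $a_2$-loop at $v_2$ joined by the reduced path in the tree; if $w\in F_3$ denotes the word read along this path from $v_1$ to $v_2$, then based at $v_1$ we get $V=\<a_3,\, w a_2 w^{-1}\>$, which is conjugate to $\<a_2^\g,a_3\>$ for an appropriate $\g\in F_3$.

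The main step that takes a little care is the second paragraph, where one must argue that the immersion of a wedge of $n-2$ loops into $\core_*(V')$ really does land with all loops based at a single vertex $v_1$ of $\core(V)$; the rest is combinatorial bookkeeping with Corollary \ref{c:loops} and the uniqueness of embedded loops in malnormal subgroups.
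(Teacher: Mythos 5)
Your proposal is correct and follows essentially the same route as the paper: both arguments lift the cores of $\<a_3,\dots,a_n\>$ and $\<a_2,\dots,a_{n-1}\>$ into $\core(V)$ and use the uniqueness of the embedded $a_i$-loops (sharing the $a_3$-loop when $n\ge 4$) to force a common wedge vertex, with the valence-$1$/rank count handling $n=3$. Your preliminary appeal to Corollary \ref{c:loops} to pin down the shape of $\core_*(V)$ first is a harmless extra step; the paper gets the same conclusion directly from the overlapping lifts.
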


\begin{proof}
In this proof factors are considered up to conjugacy so we ignore
basepoints and work with $\core(V)$.
 
The assumptions imply that the inclusions $\core(\<a_3,\cdots,a_n\>)\hookrightarrow R_n$ and
$\core(\<a_2,\cdots,a_{n-1}\>)\hookrightarrow R_n$ both lift to $\core(V)\to
R_n$. If $n\geq 4$ these lifts both contain the unique loop of $\core(V)$ labeled $a_3$, so they
overlap  and their union is a wedge of $n$ loops labeled $a_2,\cdots,a_n$, thus proving (1).

If $n=3$ we know only that $\core(V)$ contains embedded loops labeled
$a_2$ and $a_3$. As $\core(V)$ has no vertices of valence $1$
and $\rank(V)=2$, it must be the
graph obtained from these two loops by connecting them with an arc,
labeled $\gamma$ say. This proves (2).
\end{proof}

The case $n\ge 4$ in the preceding lemma can also be deduced from the 
following consequence of the second bullet point above.
 
\begin{lemma}\label{fig-8}
If $V<F_n$ is a free factor that contains conjugates of $a_1, a_2$ and $a_1a_2$, then 
the loops labeled $a_1$ and $a_2$ are based at the same vertex of $\core (V)$. 
\end{lemma}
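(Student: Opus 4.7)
The plan is to derive a contradiction from the free-factor hypothesis via a homological argument. The key observation is that if $V$ is a free factor of $F_n$, writing $F_n = V * W$ gives $F_n^{\mathrm{ab}} = V^{\mathrm{ab}} \oplus W^{\mathrm{ab}}$, so the abelianization of the inclusion $V \hookrightarrow F_n$ is injective; equivalently, the map $H_1(\core(V); \Z) \to H_1(R_n; \Z) = \Z^n$ induced by the labeling morphism is injective.

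The second bullet at the start of Section~\ref{s:2.2} provides three unique loops in $\core(V)$: the $a_1$-self-loop $c_1$ at $v_1$, the $a_2$-self-loop $c_2$ at $v_2$, and the unique loop $c_3$ labeled $a_1 a_2$, which has the form $p \xrightarrow{a_1} q \xrightarrow{a_2} p$. I would argue by contradiction, assuming $v_1 \neq v_2$. The first step is to verify that the four edges supporting these three cycles are pairwise distinct. If the $a_1$-edge of $c_3$ coincided with the self-loop $c_1$, then $p = q = v_1$ and the $a_2$-edge of $c_3$ would be a self-loop at $v_1$ labeled $a_2$, forcing $v_1 = v_2$ by uniqueness of the $a_2$-loop -- contrary to assumption. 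The symmetric argument handles the $a_2$-edge of $c_3$, and $c_1, c_2$ carry different letters, so they are distinct.

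From edge-disjointness, $c_1, c_2, c_3$ are $\Z$-independent in $H_1(\core(V); \Z)$: each edge appears in at most one of the cycles, so in any linear relation the coefficient of that edge pins down exactly one coefficient of the relation, which must therefore vanish. But reading labels gives $c_1 \mapsto e_1$, $c_2 \mapsto e_2$, $c_3 \mapsto e_1 + e_2$ in $\Z^n$, so $c_1 + c_2 - c_3$ is a nontrivial element of the kernel of $H_1(\core(V)) \to H_1(R_n)$, contradicting injectivity. This contradiction forces $v_1 = v_2$.

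The only delicate step is the small case analysis establishing edge-disjointness; the homological core of the argument is clean once the setup is in place.
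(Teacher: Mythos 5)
Your proof is correct and takes essentially the same route as the paper's: both arguments rest on the injectivity of $H_1(V)\to H_1(F_n)$ for a free factor $V$, which forces the loop labeled $a_1a_2$ to be homologous in $\core(V)$ to the sum of the $a_1$- and $a_2$-loops and hence to share their edges. Your write-up is a more explicit, contrapositive rendering of the paper's terser ``the subgraphs coincide'' argument, with the edge-disjointness case analysis spelled out.
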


\begin{proof}
The union of the loops in $\core (V)$ labeled $a_1, a_2$ and $a_1a_2$ is equal in homology
to the union of the loops  labeled $a_1$ and $a_2$, because  
$H_1(V)$ injects into $H_1(F)$. It follows that these subgraphs coincide, and hence the loop labelled $ab$
is based at the same vertex as either the $a$-loop or the $b$-loop, forcing all three loops to be based at the same vertex.
\end{proof}

\subsection{Intersections and pullbacks}

Given finitely generated $H_1,H_2<F_n$ one can compute the
intersection $H_1\cap H_2$ by constructing the pullback of the
labeling maps $\core_*(H_i)\to R_n$: the vertex set of the pullback
graph $P$ consists of pairs of vertices $(v,v')\in\core_*(H_1)\times
\core_*(H_2)$ with the same image in $R_n$, and the directed edges of
$P$ are pairs of directed edges with the same image in $R_n$.  The
component of $P$ that contains the basepoint $(*,*)$ is
$\core_*(H_1\cap H_2)$, possibly with trees attached. Some of the
components of $P$ may be trees, while those with non-trivial fundamental
group correspond  to the non-trivial intersections of $H_1$ with
the conjugates of $H_2$.

\subsection{Free factor graphs, distance in $\FF_n$ and $\OF_n$, and links}

$\FF_n$ is the geometric realisation of the poset of non-trivial
proper free factors of $F_n$ ordered by inclusion. 
For $n\ge 3$ it is a flag complex, so every automorphism of its 1-skeleton $\FF_n^{(1)}$ extends uniquely to a
simplicial automorphism of $\FF_n$. Thus, studying the group of simplicial automorphisms of $\FF_n$ is equivalent to 
studying the group of isometries of the graph $\FF_n^{(1)}$, metrized so that each edge has length $1$. 
To lighten the notation, we sometimes write $\FF_n$ in place of $\FF_n^{(1)}$, when concentrating on the
{\em free factor graph}, which has vertices the non-trivial free factors $A<F_n$ and has an edge joining $A$ to $B$
if $A<B$. Similarly, rather than studying $\OF_n$ as a simplicial complex we shall sometimes
concentrate on its 1-skeleton,
i.e.  the quotient of the free factor graph by the action of ${\rm{Inn}}(F_n)$ -- so vertices are conjugacy classes of 
proper free factors and there is an edge from $[A]$ to $[B]$ if there are representatives of these conjugacy classes
with $A<B$.  

When $n\ge 3$, we write $d_\A(A,B)$ for the combinatorial distance between vertices in (the 1-skeleton of) $\FF_n$
 and $d_\O([A],[B])$ for the distance in $\OF_n$. When there is no danger of ambiguity, we will simply write $d$.
 We shall use the terms ``automorphism" and ``isometry" interchangeably and supress mention of
 the restriction isomorphism from the group of simplicial automorphisms of the full complex $\FF_n$ to
 the isometry group of its 1-skeleton, writing both groups as ${\rm{Isom}}(\AF_n)$ or ${\rm{Aut}}(\AF_n)$ (and similarly for $\OF_n$).

We shall not have to bother much with the case $n=2$, but when we
do we must modify the above definition because $\FF_2$ is just a discrete set: to account for this we 
regard $\FF_2$ as the vertex set of the graph that has an edge joining
$\<a\>$ to $\<b\>$ whenever $\<a,b\>=F_2$ and metrize it and $\OF_2$ accordingly. (This makes $\OF_2$
isometric to the vertex set of the Farey graph.)

Estimating distances and understanding neighbourhoods 
in $\FF_n$ and $\OF_n$ is difficult in general, as we shall see  in the  proof of 
Theorems \ref{thm1} and \ref{thm2}, but there are some simple facts relating distance to the algebra of free factors.
For example:

\begin{lemma}\label{l:le2}
Let $V_1$ and $V_2$ be vertices in $\FF_n$ with $\rank(V_1)=n-1$. Then $d(V_1,V_2)\leq 2$ if and only
if $V_1\cap V_2\neq 1$.
\end{lemma}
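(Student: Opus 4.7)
The plan is to handle both directions using the algebraic facts quoted at the start of Section~2 together with one structural observation: any neighbour of $V_1$ in $\AF_n$ is a proper free factor of $V_1$. Indeed, if $W$ is a proper free factor of $F_n$ with $V_1 \subseteq W$, then the quoted fact applied to $L = V_1$ and $H = W$ shows that $V_1 = V_1 \cap W$ is a free factor of $W$; since $\rank V_1 = n-1$ and $\rank W \leq n-1$, this forces $W = V_1$. In particular $V_1$ has no strictly upward neighbours in $\AF_n$.

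For the forward direction, assume $d(V_1,V_2) \leq 2$. If $d \leq 1$, then either $V_1 = V_2$ or, by the observation above, $V_2 \subsetneq V_1$, and in both cases $V_1 \cap V_2 \neq 1$. If $d = 2$, choose a common neighbour $W$ of $V_1$ and $V_2$; the observation forces $W \subsetneq V_1$, while $W$ is comparable with $V_2$, giving either $W \subseteq V_1 \cap V_2$ or $V_2 \subseteq W \subseteq V_1$. In either case $V_1 \cap V_2 \neq 1$.

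Conversely, suppose $V_1 \cap V_2 \neq 1$. If $V_2 \subseteq V_1$ then $d(V_1,V_2) \leq 1$, so assume $V_2 \not\subseteq V_1$ and set $W := V_1 \cap V_2$. By the quoted fact $W$ is a free factor of $F_n$; it is nontrivial by hypothesis and proper because $W \subseteq V_1 \subsetneq F_n$. The containment $W \subseteq V_2$ is strict (since $V_2 \not\subseteq V_1$), and the containment $W \subseteq V_1$ is also strict (otherwise $V_1 \subseteq V_2$, which by the opening observation forces $V_1 = V_2$, contradicting $V_2 \not\subseteq V_1$). Hence $W$ is a vertex of $\AF_n$ adjacent to both $V_1$ and $V_2$, so $d(V_1,V_2) \leq 2$.

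There is no serious obstacle to overcome; the only point requiring attention is verifying in each case that the intermediate vertex $W$ is a genuine vertex of $\AF_n$ — proper, nontrivial, and a free factor of $F_n$ — and this is precisely what the rank-$(n-1)$ hypothesis on $V_1$ and transitivity of the free-factor relation deliver.
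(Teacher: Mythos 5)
Your proof is correct and follows essentially the same route as the paper: a common neighbour of $V_1$ must lie strictly below $V_1$ (since $\rank V_1 = n-1$), hence is comparable with $V_2$ in one of two ways, each forcing $V_1 \cap V_2 \neq 1$; and conversely $V_1 \cap V_2$ itself serves as the intermediate vertex. You are somewhat more thorough than the paper, which leaves the converse direction and the verification that the intersection is a genuine vertex implicit.
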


\begin{proof} $d(V_1,V_2)=1$ if and only if $V_2<V_1$.
If $d(V_1,V_2)=2$, then there is a free factor $U$ with $d(V_1,U)=1=d(U,V_2)$, whence
 $U<V_1$ and either $V_2<U$ or  $U<V_2$. In the first case $V_2<V_1$
and in the second case $U\subset V_1\cap V_2$.
\end{proof}

A similar argument establishes:

\begin{lemma}\label{l:ole2}
Let $[V_1]$ and $[V_2]$ be vertices in $\OF_n$ with $\rank(V_1)=n-1$. Then $d([V_1],[V_2])\leq 2$ if and only
if $V_1^w\cap V_2\neq 1$ for some $w\in F_n$.
\end{lemma}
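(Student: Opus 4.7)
The plan is to mimic the argument for Lemma \ref{l:le2}, with care taken to track conjugating elements. The backward direction is the simpler half: given $w \in F_n$ with $U := V_1^w \cap V_2 \neq 1$, I appeal to the preliminary fact that an intersection of free factors is a free factor. Since $U < V_2$ and $V_2$ is proper, $U$ is a non-trivial proper free factor of $F_n$, so $[U]$ is a vertex of $\OF_n$. The inclusions $U < V_1^w$ and $U < V_2$ yield edges $[U]$--$[V_1]$ and $[U]$--$[V_2]$, hence $d([V_1],[V_2]) \le 2$.

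For the forward direction, I split by cases on $d := d([V_1],[V_2]) \in \{0,1,2\}$. If $d=0$ then $V_1 = V_2^w$ for some $w$, and $V_1^{w^{-1}} \cap V_2 = V_2 \neq 1$. If $d=1$ then there exist representatives with either $V_1 < V_2^u$ or $V_2^u < V_1$; the first is excluded because $\rank(V_1) = n-1$ and $V_2$ is proper, so $V_2^u < V_1$, whence $V_2 < V_1^{u^{-1}}$ and the conclusion follows with $w = u^{-1}$.

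The only substantive case is $d=2$. Here there is a vertex $[U]$ with $d([V_1],[U]) = d([U],[V_2]) = 1$. The first equality, combined with the rank hypothesis on $V_1$ and the same rank argument as in the $d=1$ case, forces $U^x < V_1$ for some $x$; after replacing $U$ with $U^x$ (which does not change $[U]$), I may assume $U < V_1$. The edge $[U]$--$[V_2]$ then gives either $U^y < V_2$ for some $y$, in which case $V_1^y \cap V_2 \supset U^y \neq 1$, or $V_2^y < U$ for some $y$, in which case $V_2^y < U < V_1$ and so $V_1 \cap V_2^y \neq 1$. In either subcase we have produced a single $w \in F_n$ with $V_1^w \cap V_2 \neq 1$.

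I expect no genuine obstacle. The one thing to monitor is the bookkeeping of conjugating elements in the $d=2$ case, since the final statement involves a single $w$ while the intermediate vertex $[U]$ introduces two potentially independent conjugations; the replacement $U \mapsto U^x$ above is precisely what consolidates these into a single element.
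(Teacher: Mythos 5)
Your proof is correct and follows exactly the route the paper intends: the paper simply says Lemma \ref{l:ole2} is established by ``a similar argument'' to Lemma \ref{l:le2}, and your write-up is precisely that argument with the conjugator bookkeeping made explicit (including the key rank observation that an edge at a rank $n-1$ vertex must point downward). No issues.
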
 

There will be certain points in our argument where it is convenient to work with the whole complex $\FF_n$ rather than
just its 1-skeleton. This is particularly true of arguments that involve links $\Lk(V)$. The following observations are
useful in induction arguments.

\begin{lemma}\label{l:links}
If $V\in\FF_n$ has rank $n-1$, then there is a rank-preserving isomorphism
$\Lk(V)\cong \FF_{n-1}$. More generally, if $\rank(V)=k$ then
the subcomplex  $\Lk_-(V)\subset\Lk(V)$ spanned by vertices of rank less than $k$ is isomorphic to $\FF_{k}$.
Similarly, if $[V]\in\OF_n$ has rank $k$,  then
 $\Lk_-[V]\subset\Lk[V]$ is isomorphic to $\OF_{k}$.
\end{lemma}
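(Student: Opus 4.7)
The plan is to establish the claimed isomorphisms by exhibiting an explicit rank-preserving bijection on vertex sets and then observing that incidence is preserved in both directions. The whole lemma rests on a single algebraic fact, already recorded in the preliminaries: if $H < F_n$ is a free factor and $K < F_n$, then $H \cap K$ is a free factor of $K$. In particular, for $A < V < F_n$ with $V$ a free factor of $F_n$, the subgroup $A$ is a free factor of $F_n$ if and only if $A$ is a free factor of $V$ (one direction is the cited fact applied with $K=V$ and $H=A$; the other is that a free factor of a free factor is a free factor).

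First I would treat $\FF_n$. Fix $V$ of rank $k$ and identify $V \cong F_k$. By the observation above, the non-trivial proper free factors of $F_n$ strictly contained in $V$ are exactly the non-trivial proper free factors of $V$; this yields a rank-preserving bijection between the vertex set of $\Lk_-(V)$ and the vertex set of $\FF_k$. Because both $\FF_n$ and $\FF_k$ are flag complexes (for $k\geq 3$; for $k=2$ the statement is vacuous at the edge level and the bijection on vertices still matches the definition of $\FF_2$ we fixed), and the edge relation in each case is ``strict inclusion as free factors,'' the bijection extends to a simplicial isomorphism $\Lk_-(V) \cong \FF_k$. The first sentence then follows as the special case $k=n-1$: there is no proper free factor of $F_n$ strictly containing $V$ (since its rank would have to be $n$), so $\Lk(V)=\Lk_-(V)$, and the isomorphism is $\Lk(V)\cong \FF_{n-1}$.

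For the $\OF_n$ statement, fix a representative $V$ of rank $k$. The point is that $F_n$-conjugacy and $V$-conjugacy coincide on non-trivial subgroups of $V$. Indeed, free factors of $F_n$ are malnormal, so if $A, A' < V$ are non-trivial and $g^{-1}Ag = A' \subseteq V$, then the non-trivial subgroup $A$ lies in $V \cap V^{g^{-1}}$, forcing $V = V^{g^{-1}}$ and hence $g \in V$. Thus the $F_n$-conjugacy class $[A]$ of a non-trivial free factor $A$ contained in $V$ is determined by, and determines, its $V$-conjugacy class $[A]_V$. Sending $[A]\mapsto [A]_V$ gives a rank-preserving bijection from the vertex set of $\Lk_-[V]$ to the vertex set of $\OF_k$; the incidence relation on both sides says that after choosing suitable representatives one class is a free factor of the other, and again malnormality lets one perform the adjustment inside $V$, so the bijection is an isomorphism of complexes.

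The only subtle point — and therefore the step I expect to require the most care — is the $\OF_n$ case: one must make sure that the edge relation in $\Lk_-[V]$, which a priori allows representatives to be adjusted independently by arbitrary conjugations in $F_n$, coincides with the edge relation in $\OF(V)$, where adjustments happen only inside $V$. This is precisely where the malnormality of $V$ is used, and with that observation in hand the proof is short.
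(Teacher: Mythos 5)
Your proof is correct and follows essentially the same route as the paper: the $\FF_n$ assertions come straight from the fact that free factors of $V$ are exactly the free factors of $F_n$ contained in $V$, and the $\OF_n$ assertion hinges on malnormality of $V$ forcing $F_n$-conjugacy of subgroups of $V$ to be realized inside $V$. The paper's proof is a two-line version of exactly this argument, so no further comment is needed.
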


 \begin{proof}
 The assertions about $\FFn$ are immediate from the definitions. For the assertion about $\OF_n$ 
 one needs to note that because $V$ is malnormal in $F_n$, free factors $A,A'<V$ are conjugate in
 $V$ if they are conjugate in $F_n$.
 \end{proof} 
 
 We shall also need the following observation concerning links.
 
 \begin{lemma}\label{l:lk1} 
If $[C]\in \OF_n$ is a vertex of rank 1, then
$\Lk_{\OF_n}([C])\cong \Lk_{\AF_n}(C)$.
\end{lemma}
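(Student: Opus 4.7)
My plan is to show that the natural quotient map $\pi\colon A\mapsto [A]$ restricts to a simplicial isomorphism $\Lk_{\AF_n}(C)\to\Lk_{\OF_n}([C])$, with all of the content coming from the malnormality of free factors of $F_n$. Because $C$ has rank $1$ and there are no rank-$0$ free factors, the vertices of $\Lk_{\AF_n}(C)$ are precisely the proper free factors $A$ with $C<A$, while the vertices of $\Lk_{\OF_n}([C])$ are the conjugacy classes $[A]$ admitting a representative that properly contains $C$. Surjectivity on vertices is then immediate: given such an $[A]$, any conjugate representative containing $C$ serves as a preimage.

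For injectivity on vertices, suppose $A_1, A_2$ are proper free factors both properly containing $C$, with $A_2=wA_1w^{-1}$ for some $w\in F_n$. Choose any nontrivial $c\in C$: then $c\in A_1$ directly, and $c\in A_2=wA_1w^{-1}$, so $c\in A_1\cap wA_1w^{-1}$. Since the free factor $A_1$ is malnormal in $F_n$, this forces $w\in A_1$, whence $A_2=A_1$. For edges (which are relevant only when $n\ge 4$), $\pi$ clearly sends an inclusion $A_1<A_2$ of factors containing $C$ to an adjacency in $\OF_n$. Conversely, suppose $A_1, A_2$ both contain $C$ and $[A_1], [A_2]$ are adjacent in $\OF_n$; without loss of generality, $wA_1w^{-1}\subseteq A_2$ for some $w\in F_n$. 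Then for any $1\neq c\in C$, the element $wcw^{-1}$ lies both in $A_2$ (since $c\in A_1$ forces $wcw^{-1}\in wA_1w^{-1}\subseteq A_2$) and in $wA_2w^{-1}$ (since $c\in A_2$). Malnormality of $A_2$ therefore forces $w\in A_2$, and then $A_1\subseteq w^{-1}A_2w=A_2$, so the edge $A_1<A_2$ is present in $\Lk_{\AF_n}(C)$.

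Because $\AF_n$ and $\OF_n$ are flag complexes for $n\ge 3$, a vertex bijection that preserves and reflects adjacency extends uniquely to a simplicial isomorphism of the links, completing the proof. There is no substantial obstacle: the entire argument reduces to two applications of the malnormality of free factors---once for vertex injectivity, once to show that the bijection reflects edges---with only the bookkeeping of conjugation conventions requiring care.
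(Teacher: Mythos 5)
Your proof is correct and rests on the same key fact as the paper's: malnormality of free factors pins down, within each conjugacy class adjacent to $[C]$, the unique representative containing $C$. The paper packages this graphically --- the unique loop labelled $a_1$ in $\core(L)$ determines a canonical basepoint, giving an explicit inverse $[L]\mapsto L_{a_1}$ to the projection --- whereas you argue algebraically and, usefully, spell out the verification on edges (and the reduction to the $1$-skeleton via the flag property) that the paper leaves implicit.
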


\begin{proof} Without loss of generality we may assume $C=\<a_1\>$. If $d([C], [L])=1$,
then $L$ contains a conjugate of $a_1$ and
$\core(L)$ contains a unique loop labeled $a_1$.   
We select a conjugate $L_{a_1}\in [L]$ by decreeing
the vertex at which this loop is based to be the basepoint. This choice 
$[L]\mapsto L_{a_1}$ provides an inverse to the canonical projection $\Lk_{\AF_n}(C)\to \Lk_{\OF_n}([C])$.
\end{proof}

\subsection{Fully irreducible automorphisms and injectivity radius}

Recall that an automorphism $f:F_n\to F_n$ is called {\em fully irreducible} if for every proper free factor $A<F_n$
and every $k>1$, the free factor $f^k(A)$ is {\em not} conjugate to $A$. 
Fully irreducible automorphisms exist in every rank
$n\ge 2$,  \cite{PV}. The results in this section are valid for an {\bf{arbitrary fully irreducible automorphism $f$}}
but when we come to use them in Section \ref{s:antipode} we will be free to fix a choice, so it would be
enough, for example, to prove these results for $a_1\mapsto a_2\mapsto a_1a_2$ in the case $n=2$, or
\begin{equation}
f_0: a_1\mapsto a_2\mapsto a_3\mapsto\dots \mapsto a_{n-1}\mapsto a_n\mapsto a_1 a_3 a_4\dots a_n a_2,
\end{equation}
in the general case. 
\medskip

The following proposition can be proved using standard facts 
about stable laminations \cite{BFH97}. We give an alternative proof suited to the study of free factor
complexes; the general theory is hidden in our appeal to \cite{BF}.   

\begin{proposition} \label{l:loxo} Let $f\in{\rm{Aut}}(F_n)$ be a fully irreducible automorphism.
For all $\ell >0, R>0$ and every free factor $A< F_n$, there is an integer $K=K(f,A,\ell,R)$ such that,
 all $k\ge K$, 
$$
d_\A(f^k(A), C) \ge d_\O([f^k(A)], [C]) \ge R
$$
for all rank-1 free factors $C=\<c\>$ with $|c|\le \ell$.
\end{proposition}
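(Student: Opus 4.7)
The plan is to deduce both inequalities from simple general considerations plus the loxodromic action of $f$ on $\OF_n$ provided by Bestvina--Feighn \cite{BF}.

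First I would dispose of the left-hand inequality $d_\A(f^k(A),C)\ge d_\O([f^k(A)],[C])$: the quotient map $\AF_n \to \OF_n$ induced by $\mathrm{Inn}(F_n)$ sends vertices to vertices and edges to edges (any edge $A<B$ projects to the edge $[A],[B]$, unless these conjugacy classes coincide, in which case it collapses). Hence it is $1$-Lipschitz on $1$-skeleta, and the inequality is immediate. It therefore suffices to prove the lower bound $d_\O([f^k(A)],[C])\ge R$.

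Next I would observe that the set
\[
S_\ell \;=\; \bigl\{\,[\langle c\rangle]\in\OF_n \;:\; c\in F_n,\ |c|\le\ell,\ \langle c\rangle \text{ a free factor of rank }1\,\bigr\}
\]
is finite. Indeed, a conjugacy class of a rank-$1$ free factor is determined by the cyclic word representing any generator, and there are only finitely many cyclically reduced words in $F_n$ of length at most $\ell$. Consequently the diameter
\[
D \;=\; \max_{[C]\in S_\ell} d_\O\bigl([A],[C]\bigr)
\]
is a finite constant depending only on $A$ and $\ell$.

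Finally I would invoke the theorem of Bestvina--Feighn \cite{BF} that $\OF_n$ is Gromov hyperbolic and that every fully irreducible automorphism acts on it as a loxodromic isometry. In particular, the orbit $k\mapsto [f^k(A)]$ is a quasigeodesic and
\[
d_\O\bigl([A],[f^k(A)]\bigr)\;\longrightarrow\;\infty \qquad (k\to\infty).
\]
Choose $K$ so large that $d_\O([A],[f^k(A)])\ge R+D$ for all $k\ge K$. For any rank-$1$ free factor $\langle c\rangle$ with $|c|\le\ell$, the triangle inequality gives
\[
d_\O\bigl([f^k(A)],[C]\bigr)\;\ge\; d_\O\bigl([A],[f^k(A)]\bigr)-d_\O\bigl([A],[C]\bigr)\;\ge\; (R+D)-D \;=\; R,
\]
which is the required bound.

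The only non-routine input is the loxodromic-action statement from \cite{BF}, which is precisely the ``general theory'' the authors acknowledge hiding behind this appeal; everything else is the elementary finiteness of short conjugacy classes and the triangle inequality in the quotient graph.
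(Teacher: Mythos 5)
Your proof is correct and follows essentially the same route as the paper: both arguments reduce to the $1$-Lipschitz projection $\AF_n\to\OF_n$, the finiteness of the set of (conjugacy classes of) rank-$1$ free factors generated by words of length at most $\ell$, and the fact from \cite{BF} that a fully irreducible automorphism acts on $\OF_n$ with unbounded orbits, combined with the triangle inequality. The paper phrases the last step via a positive lower bound $k\lambda_f$ on the translation length of $f^k$, but it explicitly remarks that unboundedness of the orbit -- which is what you use -- suffices.
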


\begin{proof}
The first inequality is obvious. As there are only finitely many rank-1 free factors with $|c|\le \ell$, 
the second inequality is an immediate consequence of the fact (Theorem 9.3 of \cite{BF}) that fully irreducible elements
act on $\OF_n$ as isometries with positive translation length, so   $\inf\{ d_\O(f^k(V), V) \mid V\in \OF_n\} > k\lambda_f$
with $\lambda_f>0$, hence
$$d_\O([f^k(A)], [C]) \ge k\lambda_f - d_\O([A], [C]).$$ 
\end{proof}

In the above proof it was overkill to use the fact that orbits of $f$ grow linearly: we only needed the orbits to be
unbounded.  

\begin{corollary}\label{c:injrad}
$\injrad(\core(f^k(A)))\to\infty$ as $k\to\infty$.
\end{corollary}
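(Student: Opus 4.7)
My plan is to argue by contradiction, deriving the corollary from Proposition \ref{l:loxo}: a short embedded loop in $\core(f^k(A))$ will be converted into a rank-1 free factor $\<c\>$ of $F_n$ with $|c|$ bounded that lies within $\OF_n$-distance $1$ of $[f^k(A)]$, contradicting the proposition for all sufficiently large $k$.

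Suppose the corollary fails. Then there exist $\ell_0>0$ and a subsequence $k_i\to\infty$ such that $\core(f^{k_i}(A))$ contains an embedded loop $\gamma_i$ with $|\gamma_i|\le\ell_0$. Choose a vertex $v_i$ on $\gamma_i$ as a basepoint; then $\pi_1(\core(f^{k_i}(A)),v_i)$ is a specific conjugate $H_i$ of $f^{k_i}(A)$ with $[H_i]=[f^{k_i}(A)]$. Pick an edge $e_i$ of $\gamma_i$; the complement $\gamma_i\setminus\{e_i\}$ is an embedded path through $v_i$, which I extend to a spanning tree $T_i$ of $\core(f^{k_i}(A))$. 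In the basis of $H_i$ determined by $T_i$, the element $c_i$ associated to $e_i$ is the label of the loop that runs from $v_i$ along $T_i$ to the tail of $e_i$, crosses $e_i$, and returns along $T_i$ to $v_i$. Because $v_i$ and both endpoints of $e_i$ lie on the sub-path $\gamma_i\setminus\{e_i\}$ of $T_i$, uniqueness of tree-paths forces both tree legs into $\gamma_i\setminus\{e_i\}$, so this loop traverses each edge of $\gamma_i$ exactly once and has length $|\gamma_i|\le\ell_0$. Since $\core(f^{k_i}(A))$ is fully folded the path is reduced in $F_n$, giving $|c_i|\le\ell_0$; and since $c_i$ is a basis element of the free factor $H_i$, $\<c_i\>$ is a rank-1 free factor of $H_i$, hence of $F_n$ by transitivity of the free-factor relation. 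The inclusion $\<c_i\>\le H_i$ then yields $d_\O([\<c_i\>],[f^{k_i}(A)])\le 1$.

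Applying Proposition \ref{l:loxo} to $A$ with $\ell=\ell_0$ and $R=2$ produces an integer $K$ such that $d_\O([f^k(A)],[\<c\>])\ge 2$ for every $k\ge K$ and every rank-1 free factor $\<c\>$ with $|c|\le\ell_0$; taking any $k_i\ge K$ contradicts the previous estimate. The subtle step is the construction of $c_i$: an embedded loop in $\core(H)$ merely represents a non-power element of $H$, which in general does not generate a rank-1 free factor of $F_n$, so a short loop does not on its own exhibit a nearby rank-1 free factor. The resolution is to place the basepoint on $\gamma_i$ and to adapt the spanning tree to $\gamma_i$, so that $\gamma_i$ itself becomes the reduced loop realising one of the basis edges; transitivity of ``free factor of'' then lifts primitivity from $H_i$ up to $F_n$.
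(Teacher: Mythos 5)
Your argument is correct and follows the same route as the paper: the paper's proof is the one-line observation that a word $c$ of bounded length labelling an embedded loop in $\core(f^k(A))$ gives a rank-1 free factor $\<c\>$ with $d_\O([f^k(A)],[C])=1$, contradicting Proposition \ref{l:loxo} for large $k$. Your spanning-tree construction simply makes explicit the standard fact (already used in Section \ref{s:2.2} of the paper) that an embedded loop in the core graph of a free factor represents a primitive element of $F_n$.
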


\begin{proof} If $c$ is a word  of length $\ell$ labeling an embedded loop in $\core(f^k(A))$, then $C=\<c\>$ is a rank-1
free factor conjugate into  $f^k(A)$, so $d_\O([f^k(A)], [C])=1$, which contradicts the proposition unless $k< K(f,A, \ell, 2)$.
\end{proof}

\begin{corollary}\label{l:no-tiddler}
Let $A<F_n$ be a factor of rank $n-1$ and fix $\ell>0$. 
If $k$ is sufficiently large, then $\< f^k(A), w\> = F_n$ implies that every word conjugate to $w$ has length at
least $\ell$.
\end{corollary}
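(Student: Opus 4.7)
The plan is to prove the contrapositive using Proposition \ref{l:loxo}. Suppose $\<f^k(A), w\> = F_n$ and $w$ has some conjugate of length less than $\ell$. Conjugating $w$ to this short form and replacing $f^k(A)$ by the corresponding conjugate $B$ simultaneously, I may assume $|w| < \ell$ and $\<B, w\> = F_n$, where $B$ is a conjugate of $f^k(A)$; in particular $[B] = [f^k(A)]$ in $\OF_n$. Since $B$ has rank $n-1$ and $\<B, w\> = F_n$ has rank $n$, the natural surjection $B * \<w\> \onto F_n$ between free groups of rank $n$ must be an isomorphism, because $F_n$ is Hopfian. Thus $F_n = B * \<w\>$ as an internal free product, and so $\<w\>$ is a rank-$1$ free factor of $F_n$.

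The key step is to bound $d_\O([f^k(A)], [\<w\>])$ independently of $k$. Assuming $n \ge 3$ we have $\rank(B) = n-1 \ge 2$, so I choose a proper non-trivial free factor $H$ of $B$ with $\rank(H) \le n-2$. A basis of $H$ extends to a basis of $B$, and appending $w$ yields a basis of $F_n$, so $H * \<w\>$ is a proper free factor of $F_n$ of rank at most $n-1$. The chain
\[ B \;\supset\; H \;\subset\; H * \<w\> \;\supset\; \<w\> \]
is a path of length $3$ in $\AF_n$; hence $d_\A(B, \<w\>) \le 3$ and therefore $d_\O([f^k(A)], [\<w\>]) = d_\O([B], [\<w\>]) \le 3$.

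Applying Proposition \ref{l:loxo} with $R = 4$ then yields an integer $K = K(f, A, \ell, 4)$ such that, for all $k \ge K$ and every rank-$1$ free factor $C = \<c\>$ with $|c| \le \ell$, we have $d_\O([f^k(A)], [C]) \ge 4$. Taking $C = \<w\>$ contradicts the upper bound of $3$, which completes the proof for $n \ge 3$. The case $n = 2$ is handled the same way but more simply: the hypothesis $\<B, w\> = F_2$ directly gives $d_{\AF_2}(B, \<w\>) = 1$ by the Farey-style definition of $\AF_2$, so Proposition \ref{l:loxo} with $R = 2$ provides the contradiction. The only non-routine point is recognizing $H * \<w\>$ as a free factor of $F_n$, which rests on the basis-extension property inside the free product decomposition $F_n = B * \<w\>$.
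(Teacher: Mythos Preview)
Your proof is correct and follows essentially the same approach as the paper: both arguments observe that $\langle w\rangle$ is a rank-$1$ free factor (you justify this via the Hopfian property, which the paper leaves implicit), bound the distance in $\OF_n$ between $[f^k(A)]$ and $[\langle w\rangle]$ by $3$ (for $n\ge 3$) or $1$ (for $n=2$), and then invoke Proposition~\ref{l:loxo} to obtain a contradiction when $w$ has a short conjugate. Your explicit construction of the length-$3$ chain $B\supset H\subset H*\langle w\rangle\supset\langle w\rangle$ and the simultaneous conjugation step are not strictly needed---the paper simply asserts $d(f^k(A),C)=3$ and applies Proposition~\ref{l:loxo} to the short conjugate of $w$ directly---but they make the argument more self-contained.
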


\begin{proof} Let $C=\<w\>$; it is a free factor. Then $\< f^k(A), w\>= F_n$ implies 
$d(f^k(A), C) = 3$ when $n\ge 3$ or $d(f^k(A), C) = 1$ when $n=2$. The lemma tells us
that this cannot happen if $k$ is sufficiently large and $w$ is
conjugate to a word of length less than $\ell$.
\end{proof}

\subsection{Subfactor Projections}\label{s:subfactor}

Subfactor projections were introduced by Bestvina and Feighn  in \cite{subfactorproj}. The definition and use
of subfactor projections
is motivated by the theory of subsurface projections introduced by Masur and Minsky \cite{MM}. 
For $n\geq 3$, if $A<F_n$ is a free factor of rank $n-1$, then the subfactor projection $\pi_{A}$ 
assigns to suitable vertices $[B]\in \OF_n$ a subcomplex  $\pi_{A}([B])$  of uniformly bounded diameter in
the free factor complex $\Lk[A]\cong\OF_{n-1}$.

In more detail (see \cite{sam}), $\pi_{A}$ is defined on $[B]\neq [A]$ provided  that $[B]$ 
does not contain a conjugate $B^w$ antipodal to $A$ and that it has the following properties:
    \begin{itemize}
    \item the diameter of $\pi_{A}([B])$  is uniformly bounded
    \item if $B$ is conjugate into $A$ then $\pi_{A}([B])=[B]$ 
    \item $\pi_{A}$ is coarsely Lipschitz, i.e.~there is a constant $\delta$ such that
     if $d_\O([B], [C]) = 1$ and  $\pi_{[A]}$ is defined for both $[B]$ and $[C]$, then
     the Hausdorff distance between $\pi_{A}([B])$ and  $\pi_{A}([C])$ is at most $\delta$.
    \end{itemize}     

\section{Automorphisms of $\FF_n$ preserve the rank of vertices}\label{s:rank}  

Let $\FF_n(i)\subset\FF_n$ be the set of vertices of rank $i$. 
The following proposition is the first step in the proof of Theorem \ref{thm1}.

\begin{prop}\label{rank i} Every
automorphism of $\FF_n$ preserves $\FF_n(i)$ for $i=1,\dots,n-1$.
\end{prop}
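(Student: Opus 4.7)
My plan is to prove the proposition by induction on $n$, treating $n=3$ as a separately-handled base case.

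The starting point is the join decomposition of links: for each vertex $V\in\FF_n$ of rank $k$, the link is $\Lk(V)=\Lk_-(V)*\Lk_+(V)$, where $\Lk_-(V)$ is spanned by the vertices below $V$ (isomorphic to $\FF_k$ by Lemma \ref{l:links}, of dimension $k-2$) and $\Lk_+(V)$ is spanned by the proper free factors of $F_n$ strictly containing $V$ (of dimension $n-k-2$, with the convention $\dim(\emptyset)=-1$). Any simplicial automorphism $\Phi$ of $\FF_n$ carries $\Lk(V)$ isomorphically to $\Lk(\Phi(V))$ and preserves the maximal join decomposition, hence preserves the unordered pair of dimensions $\{k-2,\,n-k-2\}$. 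This forces $\rank(\Phi(V))\in\{k,n-k\}$; in particular, the only possible rank changes under $\Phi$ are the swaps $k\leftrightarrow n-k$.

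For the inductive step ($n\geq 4$, assuming the result for $\FF_{n-1}$), the crux is to rule out the rank $1$--rank $n-1$ swap. The key additional observation is that, besides $\Lk(V)\cong \FF_{n-1}$ when $\rank(V)=n-1$ (Lemma \ref{l:links}), we also have $\Lk(V)\cong \FF_{n-1}$ when $V=\langle a\rangle$ has rank $1$, via the natural bijection $W\mapsto W/\langle\langle a\rangle\rangle_W$ between proper free factors of $F_n$ containing $a$ and proper non-trivial free factors of $F_n/\langle\langle a\rangle\rangle\cong F_{n-1}$. The two identifications respect rank differently: under the first, a vertex of rank $j$ in $\FF_{n-1}$ matches rank $j$ in $F_n$; under the second, rank $j$ in $\FF_{n-1}$ matches rank $j+1$ in $F_n$. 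If $\rank(V)=1$ while $\rank(\Phi(V))=n-1$, the restriction of $\Phi$ to $\Lk(V)$ induces an automorphism $\Phi^*$ of $\FF_{n-1}$; by the inductive hypothesis $\Phi^*$ preserves rank, but the mismatched conventions translate this to the equation $\rank(\Phi(W))=j$ for a vertex $W\in\Lk(V)$ of rank $j+1$. This is incompatible with the first-step constraint $\rank(\Phi(W))\in\{j+1,n-j-1\}$ unless $j=n-j-1$; taking $j=1$ forces $n=3$, a contradiction. Hence $\Phi$ preserves rank $1$.

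Once $\Phi$ preserves rank $1$, the coherence condition enforced by $\rank(V_1)+\rank(V)=n$ on an edge $V_1<V$ (with ranks $1$ and $n-1$) shows that $\Phi$ also preserves rank $n-1$: if $\Phi(V)$ had rank $1$, then $\Phi(V_1)$ and $\Phi(V)$ would be two distinct rank $1$ vertices, hence incomparable, contradicting that they span an edge. Then for any rank $n-1$ vertex $V$, $\Phi|_{\Lk(V)}$ is an automorphism of $\Lk(V)\cong\FF_{n-1}$, so by induction it preserves rank; since every vertex of rank $k<n-1$ lies below some rank $n-1$ vertex, $\Phi$ preserves rank on all of $\FF_n$.

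The main obstacle will be the base case $n=3$: here $\FF_3$ is bipartite with only rank $1$ and rank $2$ vertices, both kinds of links are isomorphic to the discrete complex $\FF_2$, and the rank-tracking above degenerates because the only available rank $j=1$ in $\FF_2$ satisfies $j=n-j-1$ at $n=3$. An independent argument is needed. I would try to exploit the rigid 6-cycle structure of standard apartments in $\FF_3$, which correspond bijectively to bases of $F_3$, together with primitive elements of a rank $2$ factor that lie outside any given apartment containing that factor, so as to break the bipartite symmetry.
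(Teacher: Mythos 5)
Your reduction to the pair $\{k,n-k\}$ and your final induction for intermediate ranks are fine in outline (the paper does essentially the same: links of intermediate-rank vertices are joins, links of rank $1$ and rank $n-1$ vertices are not because they have diameter greater than $2$, and intermediate ranks are then handled by restricting to $\Lk(V)\cong\FF_{n-1}$ for a rank $n-1$ vertex $V$). But the heart of the proposition is separating rank $1$ from rank $n-1$, and both of your mechanisms for this fail. First, the base case $n=3$ is exactly where the difficulty is concentrated, and you leave it as a statement of intent; nothing you have written resolves it. Second, the inductive step rests on the claim that for $V=\<a\>$ of rank $1$ the assignment $W\mapsto \overline{W}\le F_n/\<\<a\>\>$ is a bijection from $\Lk(V)$ to the proper non-trivial free factors of $F_{n-1}$. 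This map is not injective: in $F_4=\<a,b,c,d\>$ the subgroups $\<a,b\>$ and $\<a,\,bcac^{-1}\>$ are distinct rank $2$ free factors containing $a$ (the second because $\{a,\,bcac^{-1},\,c,\,d\}$ is Nielsen-equivalent to the standard basis), yet both have image $\<\bar b\>$ in $F_4/\<\<a\>\>$. The upward link of a rank $1$ vertex is genuinely more complicated than $\FF_{n-1}$, so the ``mismatched rank conventions'' contradiction never gets off the ground. (A smaller issue: your claim that an automorphism preserves the unordered pair $\{k-2,n-k-2\}$ for intermediate $k$ needs the join factors $\Lk_-(V)$ and $\Lk_+(V)$ to be join-irreducible, which you do not argue, and your inductive step invokes this constraint for rank $2$ vertices.)

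The missing idea is a direct metric asymmetry between rank $1$ and rank $n-1$ that works uniformly for all $n\ge3$, including $n=3$. The paper's version: if $A$ has rank $n-1$ and $C=\<u\>$ is a rank $1$ factor with $A\ast C=F_n$, then every neighbour $L$ of $A$ satisfies $d(L,C)=2$, because $L<A$ gives the proper factor $L\ast C$ adjacent to both (Lemma \ref{l:link-antip}); so a geodesic of length $3$ ending at $A$ cannot be prolonged without backtracking. With the roles reversed this fails: for every rank $1$ vertex $C$ and every rank $n-1$ vertex $A$ with $C\not<A$ one constructs a rank $2$ free factor $L>C$ with $L\cap A=1$, e.g.\ $L=\<a_1,\ a_2^Ma_1a_3\>$ for $M$ large when $C=\<a_1\>$ (Lemma \ref{l:ranky}), and then $d(C,L)=1$ while $d(L,A)>2$ by Lemma \ref{l:le2}. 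This single observation separates $\FF_n(1)$ from $\FF_n(n-1)$ in every rank and is what your proposal lacks; once it is in place, your closing induction over $\Lk(V)$ for $V$ of rank $n-1$ finishes the proof as in the paper.
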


The proof is broken into several preliminary results. 

\begin{lemma}\label{1 and n-1}
Every automorphism of $\FF_n$ preserves $\FF_n(1)\cup\FF_n(n-1)$. 
\end{lemma}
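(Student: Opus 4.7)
The strategy is to characterise $\FF_n(1)\cup\FF_n(n-1)$ intrinsically via the simplicial structure of $\FF_n$, so that invariance under $\aut(\FF_n)$ is automatic. Specifically, I claim that a vertex $V\in\FF_n$ has rank $1$ or $n-1$ if and only if its link $\Lk(V)$ is \emph{not} a nontrivial simplicial join (i.e.\ cannot be written as $X_1*X_2$ with both $X_i$ nonempty). Being a nontrivial join is a purely combinatorial property of a simplicial complex, so this characterisation immediately yields the lemma: any automorphism of $\FF_n$ sends a link to a link and preserves the join property, hence preserves $\FF_n(1)\cup\FF_n(n-1)$.

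The key structural input is a \emph{join lemma}: for every $V\in\FF_n$,
\[ \Lk(V)\;=\;\Lk_-(V)\,*\,\Lk_+(V), \]
where $\Lk_\pm(V)$ are the subcomplexes spanned respectively by proper free factors strictly contained in, and strictly containing, $V$. Indeed, $A<V<B$ implies $A<B$, so any $A\in\Lk_-(V)$ is adjacent in $\FF_n$ to any $B\in\Lk_+(V)$, and every flag through a vertex of $\Lk(V)$ splits at $V$ into a lower and an upper piece. When $2\le\rk(V)\le n-2$ both halves are nonempty (a rank-$1$ subfactor of $V$ belongs to $\Lk_-$, and $V*\<x\>$ for a basis element $x$ of a complementary factor belongs to $\Lk_+$), so $\Lk(V)$ is a nontrivial join.

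The substantive content is the converse: for $\rk(V)\in\{1,n-1\}$, no nontrivial join decomposition of $\Lk(V)$ exists. The case $n=3$ is vacuous since every vertex then already has rank $1$ or $n-1$, so assume $n\ge 4$. For $V=\<a_1\>$ of rank $1$, suppose for contradiction that $\Lk(V)=S_1*S_2$ is nontrivial. The rank-$2$ factors $F_i:=\<a_1,a_i\>$ ($i=2,\dots,n$) lie in $\Lk(V)$ and are pairwise incomparable, so they occupy a single part, say $S_1$. Any $W\in S_2$ must be comparable with every $F_i$; the alternative $W\subseteq F_i$ makes $W$ a free factor of $F_i$ of rank $\ge 2$ (using that intersections of free factors are free factors, together with $V\subsetneq W$), forcing $W=F_i\in S_1$, a contradiction. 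Hence $F_i\subseteq W$ for all $i$, so $W\supseteq\<a_1,\dots,a_n\>=F_n$, absurd. For $V$ of rank $n-1$, Lemma~\ref{l:links} identifies $\Lk(V)$ with $\FF_{n-1}$, and the analogous argument applied to the pairwise incomparable rank-$1$ factors $\<a_1\>,\dots,\<a_{n-1}\>$ of $F_{n-1}$ concludes the proof.

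The main obstacle is the converse just sketched: one must rule out every nontrivial join decomposition of $\Lk(V)$. The essential lattice-theoretic input is that in a rich enough family of pairwise incomparable vertices, any element comparable with the entire family is forced to be the top or the bottom of the poset, which in our context is excluded because $F_n$ is not a vertex and $V$ is not in its own link.
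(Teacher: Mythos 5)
Your proof is correct, and its overall framework is the same as the paper's: both characterise $\FF_n(1)\cup\FF_n(n-1)$ intrinsically as the vertices whose links are not nontrivial joins, using the decomposition $\Lk(V)=\Lk_-(V)*\Lk_+(V)$ for vertices of middle rank. Where you genuinely diverge is in the proof that the links of rank $1$ and rank $n-1$ vertices admit no nontrivial join decomposition. The paper argues metrically: a nontrivial join has diameter at most $2$ in its $1$-skeleton, so it suffices to exhibit two vertices of the link at distance at least $3$ (for rank $n-1$, the factors $\<a_1\>$ and $\<a_2,\dots,a_{n-1}\>$ inside $\Lk(V)\cong\FF_{n-1}$; for rank $1$, the factors $\<a_1,a_2\>$ and $\<a_1,a_3,\dots,a_n\>$), the distance bound coming from Lemma \ref{l:le2}. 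You argue order-theoretically: you produce an antichain that must lie in a single part of any putative join and show that a vertex of the other part, being comparable with every member of the antichain, would have to contain all of $F_n$ (or, in the excluded subcase, coincide with a member of the antichain). Both routes are elementary and complete; the paper's has the advantage of recycling Lemma \ref{l:le2} and of previewing the metric techniques (diameters, antipodes) that drive the rest of the argument, while yours is self-contained poset reasoning that never touches the metric and makes the lattice-theoretic mechanism --- a large antichain with no common comparable element --- explicit.
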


\begin{proof} 
When $n=3$ there is nothing to prove, so assume that $n>3$. 
In this proof it is convenient to work with the whole complex $\FF_n$
rather than just the 1-skeleton.
If $A$ is
a factor of rank $i$ with $1<i<n-1$ then $\Lk(A)$ can be written as the
join $\Lk_-(A)*\Lk_+(A)$, where $\Lk_-(A)\cong\FF_i$ is the full subcomplex spanned by factors
contained in $A$ and $\Lk_+(A)$ is the full subcomplex spanned by factors  containing
$A$.  
To finish the
proof we need to argue that links of vertices of rank 1 and $n-1$ are
not joins. We will argue that they have diameter greater than $2$.

In the case of a rank $n-1$ factor $A$, we have $\Lk(A)\cong \FF_{n-1}$. As ${\rm{Aut}}(F_n)$ acts transitively on the set of factors of each rank, we may  assume $A=\<a_1,\dots,a_{n-1}\>$.
We could appeal to the non-trivial fact that $ \FF_{n-1}$ has infinite diameter, but it is easy to see
that it has diameter at least $3$, which suffices here: by Lemma \ref{l:le2}, it is
enough to exhibit a rank $1$ free factor $C<A$ and a rank $n-2$ free factor
$B<A$ such that $C\cap B=1$; let $C=\<a_1\>$ and let $B=\<a_2,\dots,a_{n-1}\>$. 

For the rank 1 case we examine the link of $\<a_1\><F_n$, focusing on
$\<a_1,a_2\>$ and $\<a_1,a_3,\dots,a_n\>$.  The intersection of these
factors is $\<a_1\>$ so, arguing as in the proof of Lemma \ref{l:le2},
we see that their distance in the link is greater than $2$.
\end{proof}

To distinguish rank $1$ vertices from rank $n-1$ vertices, we examine the geometry of their
 neighbourhoods in $\FF_n$.

 \begin{lemma}\label{l:link-antip}
 Let $A<F_n$ be a free factor of rank $n-1$, 
let $C=\<u\>$ be a free factor of rank $1$, and suppose $F_n= A\ast C$.
For any vertex $L$, if $d(A,L)=1$ then $d(L,C)=2$.
 \end{lemma}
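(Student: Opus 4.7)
The plan is to pin down where $L$ must sit, rule out direct adjacency of $L$ and $C$, and then exhibit an explicit common neighbour.

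First I would observe that, because $\rk(A)=n-1$, no proper free factor of $F_n$ strictly contains $A$ (such a factor would have rank $n$ and hence equal $F_n$). So the edge hypothesis $d(A,L)=1$ forces $L$ to be a proper free factor strictly contained in $A$; in particular $\rk(L)\le n-2$, and in any decomposition $A=L*L'$ the complementary factor $L'$ is nontrivial.

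Next I would rule out $d(L,C)=1$. Since $F_n=A*C$ we have $A\cap C=1$, so $L\cap C=1$ and in particular $L\ne C$. Neither $L<C$ nor $C<L$ is possible: the former would force $L=C$ (the only nontrivial free factor of an infinite cyclic group is itself), and the latter would put $u\in L\le A$, contradicting $A\cap C=1$. Hence $d(L,C)\ge 2$.

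Finally, I would construct a vertex $M$ adjacent to both $L$ and $C$. Combining $A=L*L'$ with $F_n=A*C$ yields the internal free factorisation $F_n=L*L'*C$, which regroups as $(L*C)*L'$. So $M:=L*C$ is a free factor of $F_n$, proper because $L'$ is nontrivial, and both inclusions $L<M$ and $C<M$ give edges in $\FF_n$. Thus the edge path from $L$ to $C$ through $M$ has length $2$, giving $d(L,C)\le 2$ and therefore $d(L,C)=2$.

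I do not expect a substantial obstacle; the only delicate point is the bookkeeping in the first step, namely ensuring $L\ne A$ so that $L'$ is nontrivial and $M$ is a proper free factor of $F_n$. This follows immediately from the definition of vertices of $\FF_n$ (proper nontrivial free factors) together with the rank constraint on $A$.
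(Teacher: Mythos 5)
Your proof is correct and follows essentially the same route as the paper: note that $d(A,L)=1$ forces $L<A$ by the rank constraint, and then use $V=L\ast C$ as the common neighbour realising $d(L,C)=2$. You are somewhat more explicit than the paper in ruling out $d(L,C)\le 1$ and in checking that $L\ast C$ is proper, but the core argument is identical.
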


\begin{proof} If $L<A$ then $C$ is not contained in $L$ and $V=\<L, C\>=L\ast C$ is a free factor with $d(L,V)=d(V,C)=1$.
\end{proof} 

This lemma says that a geodesic from $C$ to $A$ (which has length $3$) cannot be extended to a geodesic of length $4$; indeed any extension will
necessarily backtrack towards the initial vertex $C$. We shall prove
that this metric property fails if we reverse the roles of rank $1$
and rank $n-1$ vertices, that is, we find extensions that don't
backtrack. 

\begin{proposition}\label{p:extend}
For every rank $1$ vertex $C$ and every rank 
$n-1$ vertex $A$, if $d(A,C)>1$ then there exists a 
vertex $L$ with $d(C,L)=1$ and $d(L,A)>2$.
\end{proposition}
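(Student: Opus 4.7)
The plan is to use the $\Aut(F_n)$-action on $\FF_n$ by isometries, together with the transitivity of this action on rank $n{-}1$ free factors, to reduce to the standardised case $A=\langle a_1,\dots,a_{n-1}\rangle$; then $\core_\ast(A)=R_{n-1}$. The hypothesis $d(A,C)>1$ forces $C\not\subset A$ (free factors are root-closed and malnormal), so writing $C=\langle u\rangle$ with $u$ primitive, the reduced form of $u$ must use $a_n^{\pm 1}$. I would look for $L$ of rank $n-1$ with $C<L$: by Lemma \ref{l:le2}, such an $L$ satisfies $d(L,A)>2$ precisely when $L\cap A=1$, so the task becomes producing a rank $n-1$ free factor containing $u$ whose intersection with $A$ is trivial.

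The candidate I would try is
\[
L \;=\; \langle\, u,\ a_1 u a_2,\ a_2 u a_3,\ \dots,\ a_{n-2} u a_{n-1}\,\rangle,
\]
a subgroup on $n-1$ generators in which every generator after the first is a ``bridge'' sandwiching $u$ between two consecutive basis elements of $A$. To verify that this is a rank $n-1$ free factor of $F_n$, I would append a single extra element---one of $a_1$, $a_{n-1}$, or $a_n$ depending on the first and last letters of $u$---and check by a short Nielsen computation that the augmented set of size $n$ generates $F_n$ and is therefore a basis. The equality $L\cap A=1$ is then checked by computing the pullback of $\core_\ast(L)$ with $\core_\ast(A)=R_{n-1}$: because $R_{n-1}$ has a single vertex and no $a_n$-edge, the pullback is essentially $\core_\ast(L)$ with every $a_n$-edge deleted, and the bridges $a_{i-1} u a_i$ are arranged precisely so that deleting $a_n$-edges leaves the basepoint component a tree.

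The main obstacle I anticipate is controlling Stallings folds inside $\core_\ast(L)$. When $u$ begins or ends with some $a_j$ for $j<n$, an initial or terminal edge of the $u$-loop coincides with the first or last edge of some bridge, and several vertices of $\core_\ast(L)$ get identified before the pullback is taken; in principle, these identifications could create cycles in the pullback. I plan to handle this either by a short case analysis on the first and last letters of $u$, or by padding the bridges more robustly (e.g.\ replacing $a_{i-1} u a_i$ by $a_{i-1} a_n u a_n a_i$) so that folds at the basepoint never involve bridge-edges. Once the folds are controlled, the basepoint component of the pullback is easily seen to be a tree, giving $L\cap A=1$ and therefore $d(L,A)\geq 3>2$, as required.
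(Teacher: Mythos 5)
Your overall strategy---reduce via Lemma \ref{l:le2} to producing a free factor $L$ with $C<L$ and $L\cap A=1$---is the same as the paper's, but your candidate $L$ is not in general a free factor, and this breaks the proof. Take $n=3$, $A=\langle a_1,a_2\rangle$ and $u=a_1a_2^{-1}a_3^2$, which is primitive (the set $\{u,a_2,a_3\}$ generates $F_3$, hence is a basis) and does not lie in $A$. Your $L=\langle u,\ a_1ua_2\rangle$ has image in $H_1(F_3)=\Z^3$ generated by $(1,-1,2)$ and $(2,0,2)$, equivalently by $(1,-1,2)$ and $(1,1,0)$; the gcd of the $2\times 2$ minors of this pair is $2$, so the image is not a direct summand of $\Z^3$. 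Since the homology of a free factor injects onto a direct summand, $L$ is not a free factor, hence not a vertex of $\FF_3$ at all. The proposed verification step is also unsound even when $L$ does happen to be a free factor: $\langle L,a_1\rangle=\langle L,a_{n-1}\rangle=\langle A,u\rangle$, which equals $F_n$ only when $u$ is antipodal to $A$ (Lemma \ref{complement}), and $\langle L,a_n\rangle$ can likewise be proper---for $u=(a_3a_1)a_3(a_3a_1)^{-1}$ one checks that $\langle L,a_j\rangle\neq F_3$ for every $j$, although this $L$ is a free factor with complement $\langle a_3a_1\rangle$.

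The paper sidesteps all of this by normalising the other vertex: it puts $C=\langle a_1\rangle$ (not $A$) in standard position and builds $L$ of rank $2$, namely $L=\langle a_1,\ a_2^Ma_1a_3\rangle$ or $L=\langle a_1,\ a_2a_1^pa_3\rangle$ according to the two cases of Corollary \ref{c:loops} describing $\core_*(A)$. There the free-factor property is immediate, since $\langle L,a_2\rangle=\langle a_1,a_2,a_3\rangle$, and $L\cap A=1$ follows from a one-line subword argument ($a_2^M\notin\sub(A)$, resp.\ $a_1^p\notin\sub(A)$) rather than a pullback computation. Note also that nothing in the statement forces $\rank(L)=n-1$; insisting on corank $1$ is precisely what makes your construction hard to control. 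To salvage your approach you would need, at a minimum, to replace the uniform formula for $L$ by one adapted to $u$ and to prove the free-factor property by an argument that does not reduce to adjoining a single basis element.
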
   

This proposition is an immediate consequence of Lemma \ref{l:le2} and the following result.

\begin{lemma} \label{l:ranky}
If $A<F_n$ is a free factor of rank $n-1$ and $C$ is a free factor of rank $1$ that is not contained in $A$, then
there exists a free factor $L$ of rank $2$ with $C<L$ and $L\cap A =1$.
\end{lemma}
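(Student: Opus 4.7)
Using the transitivity of $\aut(F_n)$ on the set of rank-$(n-1)$ free factors, I first reduce to the case $A=\langle a_1,\dots,a_{n-1}\rangle$. Then $C=\langle c\rangle$ for some primitive $c\in F_n$ and, since $c\not\in A$, the reduced form of $c$ involves $a_n^{\pm 1}$. The plan splits into two sub-cases according to whether $F_n=A\ast C$.

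\emph{Case 1 ($F_n=A\ast C$).} By Lemma~\ref{complement}, $c=xa_n^{\pm 1}y$ for some $x,y\in A$, so the Nielsen-type automorphism fixing $A$ pointwise and sending $a_n\mapsto x^{-1}a_n^{\pm 1}y^{-1}$ sends $c\mapsto a_n$; after applying it I may assume $c=a_n$.  I then take
\[
L \;=\; \langle\, a_n,\ a_{n-1}a_n a_1\,\rangle.
\]
Three things are to check.  First, the Stallings graph $\core_*(L)$ has three vertices and four edges (an $a_n$-loop at $*$ together with the cycle $*\xrightarrow{a_{n-1}}v_1\xrightarrow{a_n}v_2\xrightarrow{a_1}*$) and is fully folded, so $L$ has rank $2$. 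Second, $L$ is a free factor: the Nielsen moves $a_{n-1}\mapsto a_{n-1}a_n\mapsto a_{n-1}a_na_1$ transform the standard basis into $\{a_1,\dots,a_{n-2},\,a_{n-1}a_na_1,\,a_n\}$, and $L$ is generated by its last two elements. Third, $L\cap A=1$: by the pullback description of intersections in Section~2.3, $\core_*(L\cap A)$ is the component of $*$ in the subgraph of $\core_*(L)$ obtained by deleting $a_n$-edges, and this component is the path $v_1\xleftarrow{a_{n-1}}*\xleftarrow{a_1}v_2$, a tree.

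\emph{Case 2 ($F_n\neq A\ast C$).}  Since $c$ is primitive there exists $\psi\in\aut(F_n)$ with $\psi(a_n)=c$. Setting $L_0=\langle a_n,a_{n-1}a_na_1\rangle$ and $L=\psi(L_0)$, the subgroup $L$ is automatically a rank-$2$ free factor containing $C=\psi(\langle a_n\rangle)$, and the condition $L\cap A=1$ translates into $L_0\cap\psi^{-1}(A)=1$.  The freedom in $\psi$ is the right coset $\psi\cdot\aut(F_n)_{a_n}$, where $\aut(F_n)_{a_n}$ acts on the set of rank-$(n-1)$ free factors of $F_n$ not containing $a_n$. I would choose the representative in this coset so that the translate of $\psi^{-1}(A)$ is disjoint from $L_0$, and verify this via the same pullback computation as in Case~1.

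\emph{Main obstacle.}  The genuine work is in Case~2: one must show that for every rank-$(n-1)$ free factor $A'$ that does not contain $a_n$ and is not a complement to $\langle a_n\rangle$, some element of the $\aut(F_n)_{a_n}$-orbit of $A'$ intersects the fixed rank-$2$ factor $L_0$ only in the identity. This reduces to a Stallings-graph analysis of how $\core_*(A')$ can interact with the small labelled graph $\core_*(L_0)$, and splits into sub-cases according to whether $\langle A',a_n\rangle$ equals $F_n$ or is a proper rank-$n$ subgroup; in each sub-case the flexibility provided by the stabilizer of $a_n$ is used to place $A'$ in ``general position'' relative to $L_0$.
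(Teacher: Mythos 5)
Your Case 1 is correct and complete: when $F_n=A\ast C$ you may indeed normalise $c=a_n$, and the explicit factor $L=\langle a_n,\ a_{n-1}a_na_1\rangle$ is a rank-2 free factor meeting $A=\langle a_1,\dots,a_{n-1}\rangle$ trivially, as your pullback computation shows. But Case 2 --- which is the heart of the lemma, since a rank-1 factor $C\not<A$ is generically neither inside $A$ nor a complement of $A$ --- is not proved. You state the plan (``I would choose the representative in this coset so that the translate of $\psi^{-1}(A)$ is disjoint from $L_0$'') and then explicitly defer the verification (``The genuine work is in Case 2 \dots''). The claim you would need --- that every rank-$(n-1)$ free factor $A'$ not containing $a_n$ has some $\aut(F_n)_{a_n}$-translate meeting the \emph{fixed} factor $L_0$ trivially --- is not obviously easier than the lemma itself, and no argument is given for it. So the proposal has a genuine gap: the key case is a sketch, not a proof.

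For comparison, the paper resolves the difficulty by normalising the \emph{other} factor: it takes $C=\langle a_1\rangle$ and leaves $A$ arbitrary, then builds $L$ adapted to $A$ rather than trying to move $A$ into general position relative to a fixed $L$. Concretely, by Corollary \ref{c:loops} either some $\E_{a_2}(A)$ is finite, in which case $L=\langle a_1,\ a_2^{M}a_1a_3\rangle$ with $M>\max\E_{a_2}(A)$ works because every element of $L\smallsetminus\langle a_1\rangle$ contains the forbidden subword $a_2^{M}$; or $\core_*(A)$ is a tree with loops $a_2,\dots,a_n$ attached, in which case $L=\langle a_1,\ a_2a_1^{p}a_3\rangle$ works for $p$ exceeding the diameter of $\core_*(A)$, since then $a_1^{p}\notin\sub(A)$. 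This single uniform construction covers your Case 1 and Case 2 simultaneously. If you want to salvage your approach, the missing step in your Case 2 would have to be an analysis of exactly this kind --- showing that a suitably chosen long subword cannot be read in $\core_*(\psi^{-1}(A))$ --- at which point you may as well adopt the paper's normalisation from the start.
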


\begin{proof} 
We may assume that $C=\<a_1\>$. We analyse $A$ according to the two cases in Corollary \ref{c:loops}. 
Suppose first that $\E_{a_2}(A)$ is finite, fix $M>\max\E_{a_2}(A)$ and let $L= \< a_1,\ a_2^Ma_1 a_3\>$.
Note that $L<F_n$ is a free factor, since $\<L,a_2\> = \<a_1,a_2,a_3\>$. 
A reduced word in the generators of $L$ either belongs to $C = \<a_1\>$ or else contains $a_2^M$ as a subword. 
The intersection of $C$ with $A$ is trivial, by hypothesis,  and reduced words of the latter form do not belong to $A$, by the definition of $M$,
so $L\cap A=1$.

It remains to consider the second case in Corollary \ref{c:loops}. Thus we assume now that 
$\core_*(A)$ is a tree with $n-1$ loops attached, labeled $a_2,\dots,a_n$. Observe that if $p$ is greater than the diameter of 
$\core_*(A)$, then $a_1^p\not\in \sub(A)$. It follows that no reduced word in the generators of the
rank $2$ free factor $L= \< a_1,\ a_2a_1^p a_3\>$ belongs to $A$. (Again, $L$ is a free factor because $\<L,a_2\> = \<a_1,a_2,a_3\>$.)
\end{proof}

\begin{proof}[Proof of Proposition \ref{rank i}] With Lemma \ref{1 and n-1} in hand, we compare
Lemma \ref{l:link-antip} with Proposition \ref{p:extend} to deduce that 
both $\FF_n(1)$ and $\FF_n(n-1)$ are preserved by every isometry of $\AF_n$. For $n=3$ there is nothing more
to prove, so we assume $n\ge 4$.
Let $A$ be a vertex of rank $i<n-1$
and let $V$ be a rank $(n-1)$ vertex with $A<V$.
The action of ${\rm{Aut}}(F_n)$ preserves the rank of vertices
and acts transitively on vertices of each rank, so by composing an arbitrary automorphism
$\psi\in{\rm{Isom}}(\AF_n)$ with a suitable element of ${\rm{Aut}}(F_n)$ we may assume that $\psi(V)=V$.
Then $\psi$ restricts to an isometry of $\Lk(V)\cong \AF_{n-1}$, and by induction on $n$ this 
restriction preserves the rank of vertices. 
\end{proof}

\section{Recognising Standard Apartments}\label{s:standard}

In the introduction we discussed the significance of {\em standard
  apartments}.

\begin{definition}
  A {\it standard apartment} in $\AF_n$
is the full subcomplex spanned by the free factors generated by the
non-empty proper subsets of a basis for $F_n$.
\end{definition}

For the second step
in our proof of Theorem \ref{thm1}, we must prove that every isometry of $\AF_n$ sends standard apartments to standard
apartments, i.e. the set of standard apartments is characteristic in the following sense.

\begin{definition}
We say that a collection of subcomplexes of a simplicial complex $X$ is {\it
characteristic} (or metrically distinguished) if it is preserved by the simplicial automorphism
group of $X$. 
\end{definition}

For example, for each $k$ the collection of $k$-simplices of $X$  will be characteristic.
In the previous section we proved that $\AF(i)$ is characteristic in $\AF_n$ for $i=1,\dots,n-1$. 
Our purpose in this section is to prove that the set of standard apartments is characteristic,
and a key step in the proof will be to show that the pairs of vertices $\{A,C\}$ with $\rank(A)=n-1,\
\rank(C)=1$ and $A\ast C=F_n$ is characteristic (the {\em Antipode Lemma}). Along the way, we
shall prove that various other types of subcomplexes are characteristic.

The Antipode Lemma is needed to distinguish standard apartments from {\em fake apartments} (as defined in Definition \ref{d:apartment}). Figure \ref{f:fake} illustrates two of the concerns that have to be overcome in the case $n=3$
and more elaborate fakes are discussed in Section \ref{s:fakes}.

\subsection{The Antipode Lemma}\label{s:antipode}
 
\begin{definition} A rank $n-1$ factor $\Lambda$ and a rank 1 factor $\<u\>$
are  {\em{algebraically antipodal}} if $\Lambda*\<u\>=F_n$. We write
$\Lambda\perp\<u\>$. 

$\L$ and $\<u\>$ are {\em{metrically antipodal}} in $\FF_n$ if $d(\<u\>,L)=2$
for all free factors $L$  
 with $d(\Lambda,L)=1$.
\end{definition}

\begin{remark} The condition that $\L$ and $\<u\>$ are metrically antipodal
  is equivalent to the following algebraic statement:
  $u\not\in\Lambda$ and for all free factors $L\subsetneq\Lambda$ there is a
  proper free factor of $F_n$ that contains both $L$ and $u$. We chose the more concise
  formulation in the definition because it makes clear that this
  property is invariant under isometries of $\FF_n$.
\end{remark}

\begin{theorem}[The Antipode Lemma] \label{p:antipode}
Let $\Lambda<F_n$ be a free factor of rank $n-1$ and $\<u\>$ a free factor of rank
  1. Then $\Lambda$ and $\<u\>$ are algebraically antipodal if and
  only if they are metrically antipodal.
\end{theorem}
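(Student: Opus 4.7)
For the forward direction (algebraic antipodality implies metric antipodality), suppose $\Lambda * \<u\> = F_n$. Since $\Lambda$ has corank $1$ in $F_n$, no proper free factor of $F_n$ strictly contains $\Lambda$, so any vertex $L$ with $d(\Lambda, L) = 1$ satisfies $L \subsetneq \Lambda$. Then $L * \<u\>$ is a free factor of $\Lambda * \<u\> = F_n$ of rank at most $n-1$, hence a proper free factor containing both $L$ and $\<u\>$; this gives $d(L, \<u\>) \le 2$. The hypotheses $u \notin \Lambda \supseteq L$ and $L$ nontrivial exclude distances $0$ and $1$, so $d(L, \<u\>) = 2$.

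For the converse I argue by contradiction: suppose $\Lambda$ and $\<u\>$ are metrically antipodal but $\Lambda * \<u\> \ne F_n$. If $u \in \Lambda$, then $\<u\> = \<u\> \cap \Lambda$ is a free factor of $\Lambda$, so $L := \<u\>$ gives $d(\Lambda, L) = 1$ and $d(L, \<u\>) = 0$, contradicting metric antipodality. Therefore $u \notin \Lambda$. By Hopficity of finitely generated free groups, if $\<\Lambda, u\> = F_n$, then the natural surjection $\Lambda * \<u\> \twoheadrightarrow F_n$ between rank-$n$ free groups is an isomorphism, forcing $\Lambda * \<u\> = F_n$ and contradicting our hypothesis. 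Hence $H := \<\Lambda, u\>$ is a proper rank-$n$ subgroup of $F_n$. Applying an automorphism of $F_n$, I may assume $\Lambda = \<a_1, \dots, a_{n-1}\>$; Lemma~\ref{complement} then says that $a_n^{\pm 1}$ appears at least twice in the reduced expression of $u$.

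The remaining task is to produce a proper free factor $L$ of $\Lambda$ such that no proper free factor of $F_n$ contains both $L$ and $u$. My plan is to use a fully irreducible $f \in \aut(F_n)$ and replace the pair $(\Lambda, u)$ by $(f^k(\Lambda), f^k(u))$ for large $k$: this preserves both flavours of antipodality, while Corollary~\ref{c:injrad} makes $\injrad(\core(f^k(\Lambda)))$ arbitrarily large and Corollary~\ref{l:no-tiddler} forbids short primitive elements $w'$ with $\<f^k(\Lambda), w'\> = F_n$. I then aim to choose $L = \<w\>$, a rank-$1$ free factor of $f^k(\Lambda)$, so that $\core_*(\<w, f^k(u)\>)$, computed by wedging and folding, records enough of the excursion structure of $f^k(u)$ through $a_n^{\pm 1}$ to transmit the algebraic obstruction $\<f^k(\Lambda), f^k(u)\> \ne F_n$. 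The point is that any proper free factor $V$ of $F_n$ containing $\<L, f^k(u)\>$ embeds in a rank-$(n-1)$ free factor $V'$, and $V' \cap f^k(\Lambda)$ must be a proper free factor of $f^k(\Lambda)$ containing $L$ (otherwise $V' = f^k(\Lambda)$, forcing $f^k(u) \in f^k(\Lambda)$). A rank argument then writes $V' = (V' \cap f^k(\Lambda)) * \<v\>$ with $v \notin f^k(\Lambda)$, and unpacking $f^k(u) \in V'$ in a basis of $F_n$ extending $\{v\}$ and a basis of $V' \cap f^k(\Lambda)$ should produce, via Lemma~\ref{complement}, a short primitive $w'$ algebraically antipodal to $f^k(\Lambda)$, contradicting Corollary~\ref{l:no-tiddler}. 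The main obstacle is the choice of $w$: it must be coupled to the combinatorial structure of $\core_*(H)$ so that the algebraic obstruction survives the enlargement from $V$ to $V'$, and this is precisely where the large injectivity radius of $\core(f^k(\Lambda))$ is essential, since it prevents the excursion pattern from being absorbed into a short embedded loop.
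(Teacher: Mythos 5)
Your forward direction is correct and matches the paper's Lemma \ref{l:link-antip}. The converse, however, is a plan rather than a proof, and the plan has a structural flaw. You propose to apply a fully irreducible $f\in\Aut(F_n)$ to the \emph{pair} $(\Lambda,u)$. Since the whole problem is $\Aut(F_n)$-equivariant, the set of factors $L<f^k(\Lambda)$ with $d(L,\<f^k(u)\>)>2$ is exactly the $f^k$-image of the set of valid $L$'s for $(\Lambda,u)$; transporting the configuration by an automorphism creates no new leverage. In particular, while $\injrad(\core(f^k(\Lambda)))\to\infty$, the word $f^k(u)$ grows correspondingly, so the long-versus-short contrast you want to exploit is absent, and Corollary \ref{l:no-tiddler} --- which only excludes \emph{short} complements --- gives you nothing about $f^k(u)$. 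The paper's device is different in a way that matters: it keeps $\Lambda$ and $u$ fixed and applies a fully irreducible automorphism $f:\Lambda\to\Lambda$ to a corank-one factor $L_0<\Lambda$, so that $L=f^k(L_0)$ becomes long and thin \emph{relative to the fixed short $u$}; Corollary \ref{l:no-tiddler} is then invoked with $\Lambda$ playing the role of the ambient group. Beyond this, you explicitly leave the choice of $w$ open, and the decisive step --- showing that no proper free factor contains both $L$ and $u$ --- is never carried out; the hoped-for ``short primitive $w'$'' extracted from a basis of $V'$ has no length control.

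A second, independent gap: the argument cannot be uniform in $u$. The paper splits the converse into two cases. When \emph{no conjugate} of $\<u\>$ is antipodal to $\Lambda$, it uses subfactor projections $\pi_\Lambda$ and the infinite diameter of $\Lk[\Lambda]\cong\OF_{n-1}$: choosing $[L]$ far from $\pi_\Lambda([u])$, the coarse Lipschitz property forbids a common proper free factor $B$, since $\pi_\Lambda[B]$ would be close to both. When some conjugate of $\<u\>$ \emph{is} antipodal, one normalises $u=wa_nw^{-1}$ so that $\core_*\<u\>$ is a lollipop with a single $a_n$-petal, and runs a three-case Stallings-fold analysis (via Proposition \ref{p:identify}) on $\core_*(L)\vee\core_*\<u\>$; the lollipop structure is essential there. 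Your proposal contains neither the projection argument nor a completed fold analysis, so the core of the converse is missing.
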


\begin{proof}
 It follows easily from the definitions that algebraically antipodal
 implies metrically antipodal (Lemma \ref{l:link-antip}), so we will
 assume that $\<u\>\not\perp \Lambda$ and argue that $\L$ and $\<u\>$
 are not metrically antipodal. The case $u\in\Lambda$ is trivial, so
 suppose $u\not\in\Lambda$. By applying a suitable element of
 ${\rm{Aut}}(F_n)$ we may assume $\L=\<a_1,\dots,a_{n-1}\>$.  To
 complete the proof, it suffices to exhibit a free factor $L\subset\L$
 of rank $n-2$ such that $d(L,\<u\>)>2$. Our proof will show that if
 $f:\L\to \L$ is a fully irreducible automorphism and $L_0<\L$ is any
 free factor of rank $n-2$, then $L=f^k(L_0)$ has the desired
 property, provided $k>0$ is sufficiently large.

First we consider the case where no conjugate of $u$ is algebraically
antipodal to $\Lambda$. In this case, we argue using the subfactor projection 
$\pi_\Lambda$ described in section \ref{s:subfactor}. 
Consider $\pi_\Lambda([u])$. Choose $L$ (as above or
otherwise) so that the distance between $[L]$ and $\pi_\Lambda([u])$
is large; this is possible because $Lk([\Lambda])\cong \OF_{n-1}$ has infinite diameter,
using the modified definition of $\OF_2$ if $n=3$ (see Proposition \ref{l:loxo}).
The coarse Lipschitz property of $\pi_\Lambda$ (section \ref{s:subfactor}) tells us 
any short path
between $[L]$ and $[u]$ in $\OF_n$ must pass through a conjugacy class
of factors where $\pi_{\Lambda}$ is not defined. It follows that
there does not exist a free factor $B$ that contains both
$L$ and $\<u\>$, because $\pi_\Lambda[B]$ would be well-defined in that case, 
and $\pi_\Lambda[B]$ would be a distance at most $\delta$ (the constant of section \ref{s:subfactor})
from both   $\pi_{\Lambda}([u])$ and  $\pi_\Lambda([L])$. 
($B$ is not conjugate to $\Lambda$ because $L<B$ and $u\in B 
\smallsetminus\Lambda$,
whereas distinct conjugates of $\Lambda$ intersect trivially.)
 
  It remains to consider the case where $\<u\>\not\perp\Lambda$ but some conjugate of $\<u\>$ is antipodal to
  $\Lambda$. 
  By applying an 
  automorphism that fixes $\Lambda$ we may assume that, in reduced form, $u=wa_nw^{-1}$ where $w$ is a
  word whose first letter is $a_n^{\pm 1}$. Let $L=f^k(L_0)$ be as above and assume
  that $k$ is large enough to ensure that the injectivity radius of $\core(L)$ is at least  $2 |u|$ and 
  Corollary \ref{l:no-tiddler} holds for $\ell = 2|u|$
  with $L$ in the role of $A$ and $\Lambda$ in place of $F_n$.
  We will obtain a contradiction from the
  assumption that there is a free factor $B$ of rank $n-1$ that contains both $L$ and $u$.

First we observe that if there were such a factor, then $B=\<L,u\>$ and $\core_*(B)= \core_*(L)\vee \core_*\<u\>$.
To see this, note that if the canonical map $\core_*(L)\to\core_*(B)$ were not injective, then the fundamental group of the image would be a free factor $V\subseteq B$ that strictly contained $L$. As $\rk(B)=\rk(L)+1$, 
this would imply $V=B$. 
But the edges of the graph defining $V$ are labeled by letters from $\L$, whereas   $B$ is not contained in $\L$. Thus $\core_*(L)\to\core_*(B)$ is injective. As $B$ contains a conjugate of $a_n$ but not $a_n$ itself,  $\core_*(B)$ has a loop labeled $a_n$ based at a vertex $v\neq \ast $.  And since $wa_nw^{-1}\in B$, there is path from $*$ to $v$ labeled
$w$, which begins with an $a_n$-edge. As  $B$ has rank $\rk(L)+1$, this path is disjoint from $\core_*(L)$.
Thus $\core_*(B)= \core_*(L)\vee \core_*\<u\>$.  
 
Proposition \ref{p:identify} tells us 
that if $B=\<L,u\>$ were a free factor, then by identifying two vertices in $\core_*(B)$ we could obtain a graph $\G$
that  folded to  the standard rose $R_n$. We consider three cases, depending on the
location of the two vertices being identified, and reach a contradiction in
each case.

We shall refer to $\core_*\<u\>$ as a lollipop, with stalk labeled $w$ and loop $a_n$.

\smallskip

{\it Case 1: Suppose $v_0, v_1\in\core_*(L)$.}
In this case, the image of $\core_*(L)$ in ${\G}$ defines a free factor of  rank $n-1$ that contains $L$
and is contained in $\L$, hence is equal to $\L$. And $R_n$ is obtained by folding this image with
$\core_*\<u\>$, so $F_n = \L\ast \<u\>$, contrary to the assumption that $\L\not\perp\<u\>$.

\smallskip

{\it Case 2: Suppose $v_0\in\core_*(L)$ and $v_1\in\core_* \<u\>\smallsetminus \{ *\}$.}
In this case, an arc of the stalk of $\core_*\<u\>$ that contains $v_1$ but has no edges labeled $a_n$
might fold into $\core_*(L)$ which, by construction, has injectivity radius greater than $2|u|$. If $v_0$ were a distance at least $|u|$ from the
basepoint, then after this folding we would have a fully folded graph that still contained $\core_*(L)$. If $v_0$ is a distance less than $|u|$
from the basepoint,  let $\alpha$ be the label on the arc from $\ast$ to $v_0$, let $\tilde{\beta}$ be the prefix of $w$ labeling the arc from $\ast$ to $v_1$,
and let $\beta\in\Lambda$ be the word obtained from $\tilde{\beta}$ by deleting all occurences of $a_n$. Then $\<L, \alpha\beta^{-1}\>=\Lambda$ if
$\G$ folds to $R_n$, because $\<L, \alpha\beta^{-1}\>$ is the fundamental group of the graph obtained by collapsing the edges of $\G$ labeled $a_n$. But this contradicts
Corollary \ref{l:no-tiddler}, because $|\alpha\beta^{-1}|<2|u|$.

\smallskip  

{\it Case 3: Suppose $v_0, v_1 \in\core_*\<u\>$.}
We fold $\G_1 :=\core_*\<u\>/v_0\sim v_1$. If the initial edge on the stalk of the lollipop  $\core_*\<u\>$
is not identified with the loop of the lollipop during this folding, then $\core_*(L)\vee \fold(\G_1)$ is fully
folded and we are done. Otherwise, $\fold(\G_1)$ is the wedge of two loops, one labeled $a_n$ and the 
other either labeled by a word $c$ in the letters $a_i$ with $i\neq n$,
or else  labeled $c_1c_2c_3$, where $c_1$ and $c_3$ are non-empty words of this form
and $c_2$ is a non-empty word that begins and ends $a_n^{\pm 1}$. In the former case, 
we have a contradiction from Corollary \ref{l:no-tiddler}, because
 $\<L, c\> \subsetneq\Lambda$. In the latter case, the arcs labelled $c_1$ and $c_3$ fold into
 $\core_*(L)$   and the folding stops with $\core_*(L)$ still embedded.
 \end{proof}

\subsection{Apartments, fake and standard}

The barycentric subdivision  of the boundary of the standard $k$-simplex,  $\partial\Delta_k$,
 is the geometric realisation of the poset of nonempty 
proper subsets of ${\text{\bf k}}=\{0,1,\dots,k\}$ ordered by inclusion. The
barycentre of the face opposite $i\in  {\text{\bf k}}$ is ${\text{\bf k}}\smallsetminus \{i\}$.

\begin{definition}\label{d:apartment}
An {\em apartment} in $\FF_n$ is the image of a simplicial embedding $\sigma:\partial\Delta_{n-1}\hookrightarrow\FFn$ such that 
$\rank(\sigma(S)) = |S|$ for all $S\subset {\text{\bf n-1}}$. The
apartment is {\it fake} if it is not standard.
\end{definition}

Note that an apartment is standard if and only if its rank-1 vertices
form a basis for $F_n$.
Figure \ref{f:fake} illustrates two of the ways in which fake
apartments can arise. There are more examples in Section \ref{s:fakes}.

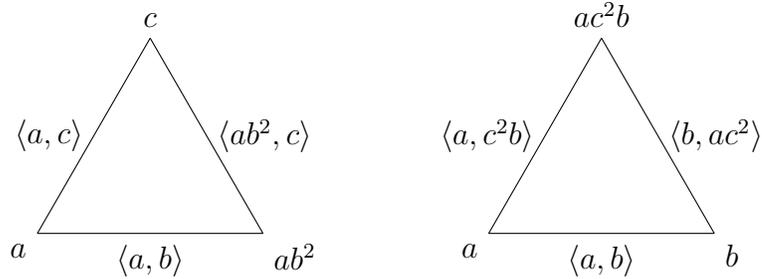
\begin{figure} [h]\label{f:fake}
\begin{tikzpicture}[scale=3]

\draw (0,0) -- (1,0);
\draw (0,0) -- (1/2,0.866);
\draw (1,0) -- (1/2,0.866);

\node[below left] at (0,0){$a$};
\node[below right] at (1,0){$ab^2$};
\node[above] at (1/2,0.866){$c$};

\node[below] at (1/2,0){$\<a,b\>$};
\node[right] at (3/4,0.433){$\<ab^2,c\>$};
\node[left] at (1/4,0.433){$\<a,c\>$};

\draw (2,0) -- (3,0);
\draw (2,0) -- (5/2,0.866);
\draw (3,0) -- (5/2,0.866);

\node[below left] at (2,0){$a$};
\node[below right] at (3,0){$b$};
\node[above] at (5/2,0.866){$ac^2b$};

\node[below] at (5/2,0){$\<a,b\>$};
\node[right] at (11/4,0.433){$\<b,ac^2\>$};
\node[left] at (9/4,0.433){$\<a,c^2b\>$};

\end{tikzpicture}
\caption{Two fake apartments in rank 3. In the first, the
  rank 2 factor $\<a,b\>$ is not generated by the adjacent rank 1
  factors. In the second, the rank 1 factors are not
  antipodal to the opposite rank 2 factors.} 
\end{figure}

\begin{lemma}\label{l:3}
An apartment in $\FF_3$ is standard if and only if each vertex is antipodal to the barycentre of the opposite face.
\end{lemma}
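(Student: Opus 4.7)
The forward implication is immediate from the definition: if the apartment is standard, its rank-$1$ vertices are $\<b_1\>, \<b_2\>, \<b_3\>$ for some basis $\{b_1, b_2, b_3\}$ of $F_3$ and its rank-$2$ vertices are $\<b_j, b_k\>$, and plainly $\<b_i\> \ast \<b_j, b_k\> = F_3$ whenever $\{i,j,k\} = \{1,2,3\}$, so each vertex is algebraically antipodal to the barycentre of its opposite face.

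For the converse, label the rank-$1$ vertices $C_i = \<c_i\>$ and the rank-$2$ vertices $\L_i$ so that $C_i < \L_j$ for $j \neq i$ and $C_i \ast \L_i = F_3$. My goal is to show that $\{c_1, c_2, c_3\}$ is a basis of $F_3$. Since $\Aut(F_3)$ acts transitively on algebraically antipodal pairs $(C, \L)$ of ranks $1$ and $2$, I first apply an automorphism to arrange $c_1 = a_1$ and $\L_1 = \<a_2, a_3\>$. Because $C_2$ and $C_3$ are rank-$1$ free factors of $F_3$ contained in the free factor $\L_1$, the fundamental fact that a free factor intersected with a subgroup is a free factor of that subgroup makes $C_2, C_3$ into rank-$1$ free factors of $\L_1$; that is, $c_2$ and $c_3$ are primitive in $\L_1$. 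A further automorphism fixing $\<a_1\>$ and preserving $\L_1$ lets me arrange $c_2 = a_2$.

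The crux is to identify $\L_2$ and $\L_3$ precisely. The subgroup $\<c_1, c_2\> = \<a_1, a_2\>$ is a rank-$2$ free factor of $F_3$; since $\<a_1, a_2\> \subseteq \L_3$, the free-factor-meets-subgroup principle shows $\<a_1, a_2\>$ is a free factor of $\L_3$, and as both have rank $2$ this forces $\L_3 = \<a_1, a_2\>$. To apply the same reasoning to $\L_2$, I need $\<c_1, c_3\> = \<a_1, c_3\>$ to be a rank-$2$ free factor of $F_3$; this is where primitivity of $c_3$ enters. Writing $\L_1 = \<c_3\> \ast \<d\>$ for some $d$, one gets $F_3 = \<a_1\> \ast \<c_3\> \ast \<d\>$, so $\{a_1, c_3, d\}$ is a basis of $F_3$ and $\<a_1, c_3\>$ is a rank-$2$ free factor. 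The previous argument then yields $\L_2 = \<a_1, c_3\>$.

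The antipodal hypothesis for $\L_3$ now reads $\<a_1, a_2\> \ast \<c_3\> = F_3$, so $\{a_1, a_2, c_3\} = \{c_1, c_2, c_3\}$ is a basis of $F_3$ and the apartment is standard. The main obstacle I anticipate is the a priori possibility that a rank-$2$ subgroup $\<c_j, c_k\>$ could sit properly inside the corresponding rank-$2$ factor $\L_\ell$ (after all, two distinct primitive elements of $F_2$ need not generate $F_2$, e.g.\ $\<a_2, a_3 a_2 a_3^{-1}\> \subsetneq \<a_2, a_3\>$). What closes this potential gap is that each $\<c_j, c_k\>$ can be upgraded to an actual rank-$2$ free factor of $F_3$ (using primitivity of one generator inside the appropriate free factor), after which the free-factor-meets-subgroup principle together with the fact that a rank-$2$ free factor of a rank-$2$ free group is the whole group finishes the argument.
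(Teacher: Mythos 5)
Your proof is correct and follows essentially the same route as the paper's: both arguments use the antipodality of one rank-$1$ vertex with its opposite face together with primitivity of the other rank-$1$ generators inside that face to show that two of the rank-$1$ generators span a genuine rank-$2$ free factor, identify that factor with the rank-$2$ vertex between them, and then invoke antipodality with the third rank-$1$ vertex to complete the basis. The only cosmetic difference is that you normalise by automorphisms where the paper writes $V=\<b,v\>$ and observes $F_3=\<a\>\ast\<b\>\ast\<v\>$ directly.
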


\begin{proof} The ``if" assertion is the non-trivial one. Suppose that the rank 1 vertices are $\<a\>,\, \<b\>,\, \<c\>$
and let $V$ be the barycentre of the face opposite $\<a\>$. Then $V=\<b,v\>$ for
some $v\in V$, and by hypothesis $F_3=V\ast\<a\>$. Thus $\<a,b\>$ is the unique free factor containing $a$ and $b$,
and it is therefore the barycentre of the face with vertices $\<a\>$ and $ \<b\>$. This is antipodal to
$\<c\>$, so $\{a,b,c\}$ is a basis for $F_3$.
\end{proof}

\begin{proposition}\label{p:standard} For $n\ge 3$,
every automorphism of $\FFn$ takes standard apartments to standard apartments.
\end{proposition}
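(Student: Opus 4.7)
The plan is to combine the Antipode Lemma (Theorem \ref{p:antipode}) with induction on $n$, using Lemma \ref{l:3} as the base case. Since $\Phi$ preserves rank (Proposition \ref{rank i}) and is a simplicial automorphism, $\Phi(\mathcal{A})$ is again the image of a rank-preserving simplicial embedding of $\partial\Delta_{n-1}$, hence an apartment. In a standard apartment, every rank-$1$ vertex $\<a_i\>$ and the opposite rank-$(n-1)$ vertex $\<a_j:j\neq i\>$ satisfy $F_n=\<a_i\>\ast\<a_j:j\neq i\>$. By Theorem \ref{p:antipode} this algebraic antipodality is equivalent to a metric property preserved by every isometry of $\FF_n$, so writing the rank-$1$ vertices of $\mathcal{A}':=\Phi(\mathcal{A})$ as $\<u_i\>$ and the opposite rank-$(n-1)$ vertices as $\Lambda_i$, we obtain $F_n=\<u_i\>\ast\Lambda_i$ for every $i$.

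It therefore suffices to show that any apartment $\mathcal{A}'$ with this antipodal property is standard, which I would prove by induction on $n$. The base case $n=3$ is Lemma \ref{l:3}. For $n\ge 4$, fix one rank-$(n-1)$ vertex, say $\Lambda_n$, and consider the subapartment $\mathcal{A}''$ of $\mathcal{A}'$ consisting of vertices strictly below $\Lambda_n$; this is an apartment in $\Lk_-(\Lambda_n)\cong\FF_{n-1}$ (Lemma \ref{l:links}), with rank-$1$ vertices $\<u_1\>,\dots,\<u_{n-1}\>$. To apply induction I first verify the antipodal hypothesis inside $\Lambda_n$: for each $j<n$, the retraction $\pi_j\colon F_n\to\<u_j\>$ arising from $F_n=\<u_j\>\ast\Lambda_j$ restricts to a retraction $\Lambda_n\to\<u_j\>$ (since $u_j\in\Lambda_n$), yielding the free factor decomposition $\Lambda_n=\<u_j\>\ast(\Lambda_j\cap\Lambda_n)$. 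The opposite vertex $V_j$ of $\<u_j\>$ in $\mathcal{A}''$ is a rank-$(n-2)$ free factor of $F_n$ contained in $\Lambda_j\cap\Lambda_n$, and since intersections with free factors are free factors, $V_j$ is a free factor of $\Lambda_j\cap\Lambda_n$ of the same rank, hence equals it. So $\Lambda_n=\<u_j\>\ast V_j$, as required.

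By induction $\mathcal{A}''$ is standard in $\FF_{n-1}$, so $\{u_1,\dots,u_{n-1}\}$ is a basis for $\Lambda_n$; together with $F_n=\<u_n\>\ast\Lambda_n$ this shows $\{u_1,\dots,u_n\}$ is a basis for $F_n$. The identification $\sigma(S)=\<u_j:j\in S\>$ for each proper nonempty $S\subset\{1,\dots,n\}$ then follows by applying induction to the subapartment inside any $\Lambda_i$ with $i\notin S$ when $|S|\le n-2$, and by the same equal-rank free factor argument applied to $\<u_j:j\in S\>\subseteq\Lambda_i$ when $|S|=n-1$. The main obstacle I expect is precisely the identification $V_j=\Lambda_j\cap\Lambda_n$: the Antipode Lemma is what converts the metric information preserved by $\Phi$ into algebraic retractions one can compute with, and without it the induction has nothing to bite on.
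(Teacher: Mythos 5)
Your overall route differs from the paper's: instead of composing with an element of $\Aut(F_n)$ so as to fix a rank-$(n-1)$ vertex $V$ and then inducting on the rigidity statement itself via $\Lk(V)\cong\AF_{n-1}$, you extract from the Antipode Lemma an intrinsic characterisation (``an apartment all of whose opposite rank-$1$/rank-$(n-1)$ pairs are algebraically antipodal is standard'') and induct on that. This is a legitimate and arguably cleaner strategy: it generalises Lemma \ref{l:3} to all $n$, and it is worth observing that the analogous characterisation genuinely fails in $\OF_n$ (Section \ref{s:fakes}), so the fact that it holds in $\AF_n$ is doing real work.

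However, the step where you verify the inductive hypothesis inside $\Lambda_n$ is wrong as written. The existence of a retraction $\Lambda_n\to\<u_j\>$ does not ``yield the free factor decomposition $\Lambda_n=\<u_j\>\ast(\Lambda_j\cap\Lambda_n)$'': the kernel of $\pi_j|_{\Lambda_n}$ is the intersection of $\Lambda_n$ with the \emph{normal closure} of $\Lambda_j$, an infinite-rank subgroup much larger than $\Lambda_j\cap\Lambda_n$, and the splitting it gives is a semidirect product, not a free product. More fundamentally, a retract of a free group need not be a free factor (for $w=a^2b^3$ the map $a\mapsto w^2$, $b\mapsto w^{-1}$ retracts $F_2$ onto the non-factor $\<w\>$), so ``retract'' cannot be upgraded to ``free factor'' without further input; your subsequent rank count for $\Lambda_j\cap\Lambda_n$ inherits the same gap. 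The conclusion you need is nevertheless true and has a short correct proof from data you already have: $V_j$ is a free factor of $F_n$ contained in $\Lambda_j$, hence a free factor of $\Lambda_j$, so $\Lambda_j=V_j\ast W$ and $F_n=\<u_j\>\ast V_j\ast W$; thus $\<u_j\>\ast V_j$ is a rank-$(n-1)$ free factor of $F_n$ contained in the rank-$(n-1)$ factor $\Lambda_n$, forcing $\Lambda_n=\<u_j\>\ast V_j$ (and, as a byproduct, $V_j=\Lambda_j\cap\Lambda_n$). With this substitution the rest of your argument goes through.
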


\begin{proof} We proceed by induction on $n$. In the light of Lemma \ref{l:3}, the Antipode Lemma 
(Theorem \ref{p:antipode})  covers the
case $n=3$.  

Assume now that $n\ge 4$ and consider a rank $n-1$ vertex $V$ of a standard apartment $\sigma$ and let $\psi$
be an automorphism of $\FFn$. By composing $\psi$ with an element of ${\rm{Aut}}(F_n)$, we may assume
that $\psi$ fixes $V$. Then $\psi$  restricts to an automorphism of $\Lk(V)\cong\FF_{n-1}$, where by induction
we know that it takes standard apartments to standard apartments. The intersection $\sigma\cap\Lk(V)$ is
such an apartment, so the image under $\psi$ of its rank 1 vertices form a basis for $V$. The Antipode Lemma
tells us that the image under $\psi$ of the remaining rank 1 vertex of $\sigma$ is antipodal to $V$. Thus the image
under $\psi$ of the vertex set of $\sigma$ is a basis for $F_n$.
\end{proof}

\noindent{\bf{Notation.}} $\Delta(b_1,\dots,b_n)$ will denote the standard apartment associated to a
basis $\{b_1,\dots,b_n\}$ of $F_n$. A {\em face of rank $k$} is the
subcomplex  $\Delta[T]$ spanned by a $k$-element subset $T\subset\{b_1,\dots,b_n\}$. 
The face opposite $\Delta[T]$ is $\Delta[T^c]$, where $T^c=\{b_1,\dots,b_n\}\smallsetminus T$.
                  
\section{Sticks and propagation: the proof of Theorem \ref{thm1}} 

In this section we complete the proof of Theorem \ref{thm1}. 

\noindent{\bf{Summary of the proof.}}
 Given an automorphism $\Phi$ of $\AF_n$, with $n\ge 3$, we now know that $\Phi$ sends
 standard apartments to standard apartments. As ${\rm{Aut}}(F_n)$ acts transitively on the set
 of standard apartments, we can compose $\Phi$ with an element of ${\rm{Aut}}(F_n)$ 
 so as to assume that $\Phi$ leaves  a standard apartment
$\Delta=\Delta(a_1,\dots,a_n)$ invariant. The stabilizer of $\Delta$ in ${\rm{Aut}}(F_n)$ 
is the group of signed permutations $W_n \cong (\Z/2)^2\rtimes{\rm{sym}}(n)$ of the corresponding basis;
its action on $\Delta$ is the full group of rank-preserving symmetries of $\Delta$. By
composing   $\Phi$ with an element of $W_n<{\rm{Aut}}(F_n)$ we may assume that 
$\Phi$ fixes $\Delta$ pointwise. We would be done if this modification forced $\Phi$ to be
the identity on the whole of $\AF_n$, but it does not. For example,
automorphisms of the form $a_i\mapsto a_i^{\pm 1}$ fix $\Delta$ but
not $\AF_n$.

Let $\lambda$ be a Nielsen transformation for the basis $\{a_1,\dots,a_n\}$,
that is $[a_i\mapsto a_ia_j,\ a_k\mapsto a_k\ (k\neq i)]$ or $[a_i\mapsto a_ja_i,\ a_k\mapsto a_k\ (k\neq i)]$.
We say that $\lambda(\Delta)$ is {\em{Nielsen adjacent}} to $\Delta$; it has a large overlap with $\Delta$.

$\AF_n$ is the union of its standard apartments and
the index-2 subgroup of ${\rm{Aut}}(F_n)$  generated 
by Nielsen transformations acts transitively on the set of standard apartments. Thus,
by propagating to neighbours throughout $\AF_n$, we would be done if  any isometry of $\AF_n$ that fixed a standard apartment pointwise had to fix the Nielsen adjacent
apartments pointwise. Although this is not the case,  we shall see that standard apartments
have {\em canonical enlargements} that make this argument work: by composing $\Phi$ with a further element of
$W_n<{\rm{Aut}}(F_n)$ we can assume that it fixes the canonical enlargement of $\Delta$ and this
forces $\Phi$ to fix the canonical enlargement of each Nielsen adjacent apartment.

The vertices of these canonical enlargements are rank-1 vertices adjacent to $\Delta$ that
we call {\em sticks} and {\em supersticks}. 

\subsection{Sticks and snops}\label{s:sticks}

\begin{definition} The {\em sticks} at a face $\Delta[b_i,b_j]$ of rank $2$  in a standard apartment
$\Delta(b_1,\dots,b_n)$ are the  rank 1 factors of the form $\<b_i^\epsilon b_j^\delta\>,\ 
\epsilon,\delta=\{\pm 1\}$. 
\end{definition}

Note that this definition depends only on $\<b_i,b_j\>$ and  $\<b_i\>,\, \<b_j\>$, not
on the rest of $\Delta(b_1,\dots,b_n)$. There are  4 sticks at each rank $2$ face, so
$\Delta(b_1,\dots,b_n)$ has $4 {n\choose 2}$ sticks in total. See
Figure \ref{f:sticks}.  

\begin{lemma}\label{stick characterization} A rank 1 free factor $C<F_n$ is a stick
of the standard apartment $\Delta(b_1,\dots,b_n)$ if and only if,  for some $b_i\neq b_j$,
$d(C,\, \<b_i,b_j\>)=1$ and $C$ is antipodal to the 
barycentres of the rank $n-1$ faces opposite $\<b_i\>$ and $\<b_j\>$.
\end{lemma}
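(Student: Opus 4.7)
The plan is to prove the two directions separately, using Lemma \ref{complement} as the main tool to translate the antipodality conditions into word-length restrictions on a generator of $C$.

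\textbf{Forward direction.} Suppose $C=\<b_i^\epsilon b_j^\delta\>$ is a stick at $\Delta[b_i,b_j]$. Then $C<\<b_i,b_j\>$, so $d(C,\<b_i,b_j\>)=1$. To see that $C$ is antipodal to $\<b_1,\dots,\widehat{b_i},\dots,b_n\>$, apply Lemma \ref{complement} (with respect to the basis $\{b_1,\dots,b_n\}$): the generator $b_i^\epsilon b_j^\delta = 1\cdot b_i^\epsilon\cdot b_j^\delta$ has the required form $x b_i^{\pm 1} y$ with $x,y\in\<b_1,\dots,\widehat{b_i},\dots,b_n\>$. The symmetric argument handles antipodality to the face opposite $\<b_j\>$.

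\textbf{Reverse direction.} Suppose $C$ satisfies the two antipodality conditions together with $d(C,\<b_i,b_j\>)=1$. Since $\rank(C)=1<2=\rank(\<b_i,b_j\>)$, the edge must go upward, so $C<\<b_i,b_j\>$ and $C=\<c\>$ for some primitive element $c\in\<b_i,b_j\>$, which I write as a reduced word in $b_i^{\pm 1},b_j^{\pm 1}$. Applying Lemma \ref{complement} to the first antipodality condition forces $c = x\, b_i^{\pm 1}\, y$ with $x,y\in\<b_1,\dots,\widehat{b_i},\dots,b_n\>$; intersecting with $\<b_i,b_j\>$ gives $x,y\in\<b_j\>$, so $c = b_j^p b_i^\epsilon b_j^q$ for some integers $p,q$ and some $\epsilon=\pm 1$. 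The symmetric application to the second antipodality condition yields $c = b_i^r b_j^\delta b_i^s$ for some $r,s$ and $\delta=\pm 1$.

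\textbf{Reconciling the normal forms.} The first shape says $c$ has exactly one syllable $b_i^\epsilon$ (as a reduced word in the basis), and the second says $c$ has exactly one syllable $b_j^\delta$. A reduced word of the form $b_j^p b_i^\epsilon b_j^q$ contains $|p|+|q|$ occurrences of $b_j^{\pm 1}$, so matching against the second shape forces $|p|+|q|=1$. Hence $c\in\{b_i^\epsilon b_j^\delta,\ b_j^\delta b_i^\epsilon\}$, and since $(b_j^\delta b_i^\epsilon)^{-1}=b_i^{-\epsilon}b_j^{-\delta}$, in either case $C=\<c\>=\<b_i^{\epsilon'} b_j^{\delta'}\>$ for some signs, which is precisely a stick at $\Delta[b_i,b_j]$.

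I don't expect any real obstacle here: the only point demanding a little care is the final reconciliation step, where one has to notice that matching the two sandwiched normal forms $b_j^p b_i^\epsilon b_j^q = b_i^r b_j^\delta b_i^s$ as reduced words forces the total syllable length to be $2$, and that the two possible cyclic orders of $b_i^\epsilon b_j^\delta$ generate the same subgroup because they are mutual inverses.
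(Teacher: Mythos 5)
Your proof is correct and takes the same route as the paper, whose entire proof is the one-line remark that the lemma follows immediately from Lemma \ref{complement}. You have simply written out that deduction in full, and the details (in particular the reconciliation of the two sandwiched normal forms) are accurate.
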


\begin{proof}  
This follows immediately from Lemma \ref{complement}.
\end{proof}

\begin{corollary}\label{c:sticks} The sets of sticks associated to standard apartments and their faces are
characteristic in $\AF_n$.
\end{corollary}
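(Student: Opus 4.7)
The corollary will follow directly by combining Lemma \ref{stick characterization} with the structural results already established. The key observation is that the characterization of sticks in Lemma \ref{stick characterization} uses only three ingredients: the rank function on vertices, the combinatorial distance $d$ in $\AF_n$, and the metric antipodality relation. Each of these is preserved by every simplicial automorphism of $\AF_n$: rank by Proposition \ref{rank i}, distance because automorphisms act as isometries on the $1$-skeleton, and metric antipodality because it is defined purely in terms of $d$ and the rank of endpoints (no algebraic input is used, which is the whole point of the Antipode Lemma).

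First I would fix an automorphism $\Phi$ of $\AF_n$ and a standard apartment $\Delta=\Delta(b_1,\dots,b_n)$. By Proposition \ref{p:standard}, $\Phi(\Delta)$ is again a standard apartment, so we may write $\Phi(\Delta)=\Delta(b_1',\dots,b_n')$. Proposition \ref{rank i} ensures that $\Phi$ permutes the rank $1$ vertices of $\Delta$, permutes the rank $2$ faces, and permutes the rank $n-1$ barycentres among themselves. Relabeling the $b_i'$ if necessary, we may assume $\Phi(\<b_i\>)=\<b_i'\>$; then $\Phi$ sends the face $\Delta[b_i,b_j]$ to $\Delta[b_i',b_j']$ and the rank $n-1$ barycentre opposite $\<b_i\>$ to the rank $n-1$ barycentre opposite $\<b_i'\>$.

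Now suppose $C$ is a stick of $\Delta$ at the rank $2$ face $\<b_i,b_j\>$. By Lemma \ref{stick characterization}, $C$ is a rank $1$ vertex with $d(C,\<b_i,b_j\>)=1$ that is metrically antipodal to the two rank $n-1$ barycentres opposite $\<b_i\>$ and $\<b_j\>$. Applying $\Phi$ and using the three preservation statements above, $\Phi(C)$ is a rank $1$ vertex with $d(\Phi(C),\<b_i',b_j'\>)=1$ that is metrically antipodal to the two corresponding rank $n-1$ barycentres of $\Phi(\Delta)$. Lemma \ref{stick characterization} applied to $\Phi(\Delta)$ then identifies $\Phi(C)$ as a stick of $\Phi(\Delta)$ at the face $\<b_i',b_j'\>=\Phi(\<b_i,b_j\>)$. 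Thus the set of sticks of $\Delta$ is mapped bijectively onto the set of sticks of $\Phi(\Delta)$, and the assignment is compatible with the rank $2$ face each stick is attached to. Since the sticks of an arbitrary face $\Delta[T]$ are by construction the union of the sticks at the rank $2$ subfaces of $\Delta[T]$, and $\Phi$ sends the rank $2$ subfaces of $\Delta[T]$ to those of $\Phi(\Delta[T])$, the set of sticks of $\Delta[T]$ is also mapped bijectively onto that of $\Phi(\Delta[T])$.

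I anticipate no genuine obstacle: the corollary is essentially bookkeeping that assembles Proposition \ref{rank i}, Proposition \ref{p:standard}, the Antipode Lemma, and Lemma \ref{stick characterization}. The only point worth flagging is the conceptual one that metric antipodality (as defined in Section \ref{s:antipode}) is phrased entirely in terms of $d$ and vertex rank and so is automatically preserved by automorphisms of $\AF_n$; everything else is a direct translation.
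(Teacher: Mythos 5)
Your proof is correct and follows exactly the route the paper takes: the paper's own proof is the one-line ``Immediate from Proposition \ref{p:standard} and the lemma,'' and your argument is just a careful unpacking of that, correctly noting that the characterization in Lemma \ref{stick characterization} uses only rank, distance, and antipodality (made automorphism-invariant by the Antipode Lemma), all of which are preserved.
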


\begin{proof}
Immediate from Proposition \ref{p:standard} and the lemma.
\end{proof}

\begin{remark} As an indication of the way in which sticks determine the geometry of $\FF_n$ in a 
neighbourhood of an apartment, note that in $\FF_3$ each of the $12$
sticks of a standard apartment $\Delta(a,b,c)$ gives rise to a
2-sphere (after gluing in disks to each apartment) made from three
apartments: for example $bc$ determines the 2-sphere
$$\Delta (a,b,c)\cup \Delta (a,bc,b)\cup \Delta (a,bc,c).$$ 
The intersection of each pair of these spheres is $\Delta (a,b,c)$.
\end{remark}

\begin{remark} [Sticks and Cubes]  Our formal proofs for $n>3$ do not rely on the following 
description of sticks in terms of cubes, but nevertheless we include the general 
case in our discussion because  it provides useful insight into the local
geometry of $\FF_n$. 

The $4 {n\choose 2}$ sticks associated to a standard apartment  
parametrize the codimension-2 faces of an $n$-cube $I^n$. 
The signed permutations of the basis associated to the apartment form a subgroup
$W_n = (\Z/2)^2\rtimes{\rm{sym}}(n)<{\rm{Aut}}(F_n)$ and the action of this on the sticks
is the restriction of the standard representation of $W_n$ as the isometry group of the cube. Figure \ref{cube}
illustrates the case $n=3$.
\end{remark}  

There are 12 sticks associated to a standard apartment (if $n=3$) or
rank-3 face (if $n>3$). When three of these sticks lie in a common
free factor of rank $2$ in such a way that any two form a basis of the
subgroup they generate, we
say that these sticks form a {\em bonded triple}. We also say that two
sticks are {\em bonded} to each other if they lie in a common bonded
triple.  There are 8 bonded triples associated to each standard
apartment (if $n=3$) or rank-3 face (if $n>3$); they parametrize the
vertices of the cube in Figure \ref{cube}.

The 12 sticks also divide into 3 classes of {\em parallel sticks}, such that no pair of sticks in
a given class belong to the same bonded triple; these correspond to the 3 classes of parallel edges in Figure \ref{cube}.
Each parallelism class divides into 2 pairs: the {\em opposite} of a given stick is the one that 
labels the edge that is parallel but has no bonds in common.

With Corollary \ref{c:sticks} in hand, the following observation is immediate from these definitions.

\begin{lemma}
Isometries of $\AF_n$ preserve bonded triples and parallelism classes of sticks, as well as pairs of opposite sticks.
\end{lemma}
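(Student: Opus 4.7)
The plan is to observe that each of the three structures---bonded triples, parallelism classes, and pairs of opposite sticks---is determined entirely by data which is preserved by every isometry of $\AF_n$: the simplicial structure together with the rank function on vertices (preserved by Proposition~\ref{rank i}), the collection of standard apartments (preserved by Proposition~\ref{p:standard}), and the set of sticks of each standard apartment or rank-3 face (preserved by Corollary~\ref{c:sticks}). Once each structure is visibly expressed in terms of these characteristic ingredients, the lemma is immediate.

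The main point requiring care is the intrinsic formulation of bonded triples, because for general rank-1 factors the subgroup $\<c_i,c_j\>$ generated by two sticks need not itself be a free factor. My plan is to characterise the condition as follows: $\{C_1,C_2,C_3\}$ is a bonded triple iff (i) there is a rank-2 vertex $V$ of $\AF_n$ with $C_i<V$ for each $i$, and (ii) for every pair $C_i,C_j$, the sticks $C_i$ and $C_j$ are rank-1 vertices of some common standard apartment of $\AF_n$. Condition (ii) is equivalent to $\<c_i,c_j\>$ being a rank-2 free factor (since $\{c_i,c_j\}$ extends to a basis of $F_n$), and combined with (i) this forces $\<c_i,c_j\>=V$ via the standard fact that two comparable rank-2 free factors of a free group must coincide. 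Both clauses are manifestly preserved by isometries, so bonded triples are characteristic.

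Finally, the three parallelism classes are the 4-element sets of sticks contained in each of the three rank-2 sub-faces of the apartment (or rank-3 face, when $n>3$); since the rank-2 sub-faces are characteristic vertices and each stick lies in a unique such sub-face, the partition is preserved. Opposite pairs within a parallelism class are those pairs $\{C,C'\}$ sharing no common bond partner---that is, no third stick lies in a bonded triple with both---so the preservation of bonded triples immediately yields the preservation of opposite pairs.
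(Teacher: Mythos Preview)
Your argument is correct and follows the same line as the paper, which simply records that the lemma is immediate from Corollary~\ref{c:sticks} together with the definitions of bonded triple, parallelism class, and opposite pair. You have merely unpacked those definitions into explicitly isometry-invariant terms; in particular your reformulation of a bonded triple via conditions (i) and (ii) is a faithful rendering of the paper's requirement that any two of the three sticks form a basis of the common rank-$2$ factor $V$, and your identification of parallel classes with the four sticks at each rank-$2$ face agrees with the cube description in Figure~\ref{cube}.
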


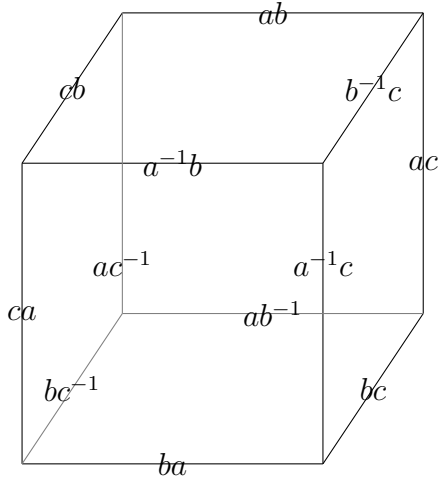
\begin{figure}[h]
\begin{tikzpicture}[scale=4]

\draw (0,0)--(1,0);
\draw (0,0)--(0,1);
\draw (0,1)--(1,1);
\draw (1,1)--(1,0);
\draw (0,1)--(1/3,3/2);
\draw (1,1)--(4/3,3/2);
\draw (1/3,3/2)--(4/3,3/2);
\draw (4/3,3/2)--(4/3,1/2);
\draw (1,0)--(4/3,1/2);
\draw[gray,thin] (0,0)--(1/3,1/2);
\draw[gray,thin] (1/3,1/2)--(4/3,1/2);
\draw[gray,thin] (1/3,1/2)--(1/3,3/2);

\node at (1/2,0) {$ba$};
\node at (1/2+1/3,3/2) {$ab$};
\node at (1/2,1){$a^{-1}b$};
\node at (1/2+1/3,1/2){$ab^{-1}$};
\node at (4/3,1){$ac$};
\node at (0,1/2){$ca$};
\node at (1,2/3){$a^{-1}c$};
\node at (1/3,2/3) {$ac^{-1}$};
\node at (7/6,1/4) {$bc$};
\node at (1/6,5/4) {$cb$};
\node at (1/6,1/4) {$bc^{-1}$};
\node at (7/6,5/4){$b^{-1}c$};

\end{tikzpicture}
\caption{The sticks associated to the standard apartment
  $\Delta = \Delta(a,b,c)$ parametrize the edges of the 3-cube. 
  Three sticks form a bonded triple
  (snop) when the three edges are adjacent to the same vertex.  The
  stabilizer of $\{a,b,c\}$ in ${\rm{Aut}}(F_3)$ is the full isometry
  group of the cube.}
\label{cube}
\end{figure}

\begin{figure}[h]
\begin{tikzpicture}[scale=4]
\draw (0,0)--(1,0);
\draw[rotate=60] (0,0)--(1,0);
\draw[shift={(1,0)},rotate=120] (1,0)--(0,0);

\draw[shift={(1/2,0)},rotate=-180/5] (0,0)--(1/2,0);
\draw[shift={(1/2,0)},rotate=-2*180/5] (0,0)--(1/2,0);
\draw[shift={(1/2,0)},rotate=-3*180/5] (0,0)--(1/2,0);
\draw[shift={(1/2,0)},rotate=-4*180/5] (0,0)--(1/2,0);

\draw[shift={(1/4,1.732/4)},rotate=60+180/5] (0,0)--(1/2,0);
\draw[shift={(1/4,1.732/4)},rotate=60+2*180/5] (0,0)--(1/2,0);
\draw[shift={(1/4,1.732/4)},rotate=60+3*180/5] (0,0)--(1/2,0);
\draw[shift={(1/4,1.732/4)},rotate=60+4*180/5] (0,0)--(1/2,0);

\draw[shift={(3/4,1.732/4)},rotate=-60+180/5] (0,0)--(1/2,0);
\draw[shift={(3/4,1.732/4)},rotate=-60+2*180/5] (0,0)--(1/2,0);
\draw[shift={(3/4,1.732/4)},rotate=-60+3*180/5] (0,0)--(1/2,0);
\draw[shift={(3/4,1.732/4)},rotate=-60+4*180/5] (0,0)--(1/2,0);

\node [below left] at (0,0) {$a$};
\node [below right] at (1,0) {$b$};
\node [above] at (1/2,1.732/2) {$c$};

\node [below] at (1/3,-1.732/4-0.05) {$ab$};
\node [above] at (1.1,0.8) {$b^{-1}c$};
\node [above] at (-0.1,0.8) {$ac$};

\end{tikzpicture}
\caption{The apartment $\Delta(a,b,c)$ in $\AF_3$ with its 12 sticks. The three
  sticks that are labeled  form a bonded triple (snop).} 
\label{f:sticks}
\end{figure}
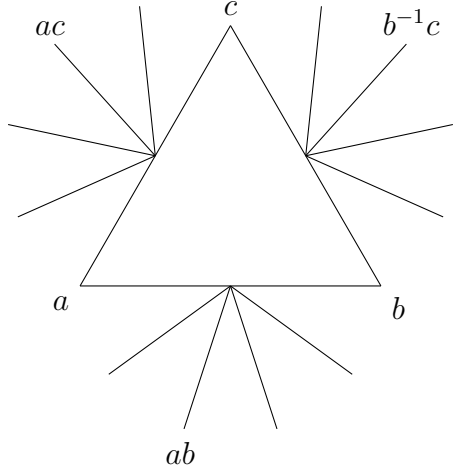

We have already noted that the sticks associated to a standard apartment $\Delta$  
parametrize the codimension-2 faces of a cube, and that in the case $n=3$ the vertices of the cube
correspond to bonded triples. In the general case, the vertices of this cube $I^n(\D)$ correspond to {\em snops},
which are defined as follows. (We shall not rely on this geometric description in our proofs.)

\begin{definition} A {\it snop}\footnote{Croatian for bundle.} is a collection $\mathcal B$ of
  sticks associated to a standard apartment $\Delta (b_1,\cdots,b_n)$
  with the following properties:
\begin{enumerate}[(1)]
\item Exactly one of the sticks associated to each rank-2 face $\Delta [b_i,b_j]$
  belongs to $\mathcal B$.
\item For every rank-3 face $\Delta [b_i,b_j,b_k]$, the 3 sticks in
  $\mathcal B$ form a {\it bonded triple}.
\end{enumerate}
\end{definition}

The following lemma is an immediate consequence of our previous results.

\begin{lemma}\label{l:snops}
Snops  are characteristic, i.e.
every isometry of $\AF_n$ takes snops to snops.
\end{lemma}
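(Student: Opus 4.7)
The plan is to unpack the definition of a snop and observe that both of its defining properties are preserved under any isometry, using only machinery already established in this section. Let $\Phi$ be an isometry of $\AF_n$ and let $\mathcal B$ be a snop associated to a standard apartment $\Delta=\Delta(b_1,\dots,b_n)$.

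First I would apply Proposition \ref{p:standard} to conclude that $\Phi(\Delta)$ is itself a standard apartment $\Delta(b_1',\dots,b_n')$, with the primed basis indexed so that $\Phi$ sends the rank-$k$ vertex spanned by any subset $\{b_i : i\in S\}$ to the corresponding vertex spanned by $\{b_i' : i\in S\}$ (this is possible because $\Phi$ preserves rank by Proposition \ref{rank i} and carries incidences to incidences). In particular every rank-2 face $\Delta[b_i,b_j]$ is mapped to $\Delta[b_i',b_j']$ and every rank-3 face $\Delta[b_i,b_j,b_k]$ to $\Delta[b_i',b_j',b_k']$. Next, Corollary \ref{c:sticks} asserts that $\Phi$ restricts to a bijection between the sticks of $\Delta$ and those of $\Phi(\Delta)$. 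The metric characterisation in Lemma \ref{stick characterization}---distance $1$ to $\Delta[b_i,b_j]$ together with antipodality to the two rank-$(n-1)$ faces opposite $\langle b_i\rangle$ and $\langle b_j\rangle$---is visibly preserved by $\Phi$, so the four sticks at $\Delta[b_i,b_j]$ are mapped to the four sticks at $\Delta[b_i',b_j']$.

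With these preparations, property (1) of a snop transfers verbatim: the unique stick of $\mathcal B$ lying at $\Delta[b_i,b_j]$ is sent to a stick at $\Delta[b_i',b_j']$, and since $\Phi$ is a bijection this image is the unique stick of $\Phi(\mathcal B)$ at that face. For property (2), I would invoke the lemma stated immediately before the definition of a snop, to the effect that isometries preserve bonded triples; it then follows that the three sticks of $\mathcal B$ lying in any rank-3 face $\Delta[b_i,b_j,b_k]$ are sent to a bonded triple in $\Delta[b_i',b_j',b_k']$, and these are precisely the three sticks of $\Phi(\mathcal B)$ there. Hence $\Phi(\mathcal B)$ satisfies (1) and (2) relative to $\Phi(\Delta)$ and is therefore a snop.

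I do not foresee any real obstacle: the entire content of the lemma is to assemble Proposition \ref{p:standard}, Corollary \ref{c:sticks}, and the preservation of bonded triples. The one small point requiring verification is that the property ``stick at a given rank-2 face of $\Delta$'' is preserved by $\Phi$, and this is immediate from the intrinsic characterisation in Lemma \ref{stick characterization}.
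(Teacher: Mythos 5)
Your proposal is correct and follows the paper's approach exactly: the paper gives no explicit proof, stating only that the lemma "is an immediate consequence of our previous results," and the results you assemble (Proposition \ref{p:standard}, Lemma \ref{stick characterization} via Corollary \ref{c:sticks}, and the preservation of bonded triples) are precisely the intended ones. Your write-up simply makes the intended one-line argument explicit.
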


There are $2^n$ snops associated to a standard apartment $\Delta$. 
The 1-skeleton of the cube $I^n(\Delta)$ can be constructed by joining two snops with an edge if they share all
but $(n-1)$ of their sticks. (Distinct snops differ by at least $(n-1)$ sticks.)

The following proposition can be proved by analysing the faithful
action of the stabiliser of $\Delta(b_1,\dots,b_n)$ on the cube
$I^n(\Delta)$, arguing that if an isometry of the cube fixes
sufficiently many codimenson-2 faces then it must be the identity. We
leave the details of this proof to the reader and give a different
proof that adapts better to the case of $\OF_n$ considered in the next
section.

\noindent{\bf{Notation.}} The pointwise stabilizer in ${\rm{Aut}}(F_n)$ of the standard apartment 
$\Delta(a_1,\dots,a_n)$ is $(\Z/2)^n = \<\e_1,\dots,\e_n\>$ where $\e_i$ is the
involution that sends $a_i$ to $a_i^{-1}$ and fixes $a_j$ if ${j\neq i}$.

To be clear, when we say that an isometry {\em fixes} a subcomplex, we mean that it does so pointwise.

\begin{prop}\label{p:fix-sticks} If $\Phi\in{\rm{Isom}}(\FF_n)$ fixes $\Delta=\Delta(a_1,\dots,a_n)$,
then there exists $\theta\in \<\e_1,\dots,\e_n\>$ such that $\theta\circ\Phi$ fixes $\Delta$ and all of its sticks.
\end{prop}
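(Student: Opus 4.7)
The strategy is to use preservation of snops (Lemma \ref{l:snops}) to normalise $\Phi$ within the pointwise stabiliser $(\Z/2)^n=\<\e_1,\dots,\e_n\>$ of $\Delta$, and then to use preservation of opposite pairs together with preservation of bonds to pin down $\theta\Phi$ as the identity on every stick.

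I first claim that $\<\e_1,\dots,\e_n\>$ acts simply transitively on the $2^n$ snops of $\Delta$. Injectivity of the orbit map $\theta\mapsto\theta(\mathcal{B}_0)$ for a fixed snop $\mathcal{B}_0$: if $\theta=\prod\e_i^{\alpha_i}$ fixes $\mathcal{B}_0$ setwise then, since $\theta$ preserves each face of $\Delta$ and $\mathcal{B}_0$ contains exactly one stick per face, $\theta$ fixes the unique stick $\<a_i^{\epsilon}a_j^{\delta}\>$ of $\mathcal{B}_0$ at each face $\Delta[a_i,a_j]$. As $\theta$ acts on this face as $\e_i^{\alpha_i}\e_j^{\alpha_j}$, sending $\<a_i^{\epsilon}a_j^{\delta}\>$ to the subgroup generated by $a_i^{(-1)^{\alpha_i}\epsilon}a_j^{(-1)^{\alpha_j}\delta}$, and the four sticks at the face are pairwise distinct as cyclic subgroups, we must have $\alpha_i=\alpha_j=0$. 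Ranging over all pairs gives $\theta=1$, and by cardinality the action is simply transitive.

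By Lemma \ref{l:snops}, $\Phi$ permutes snops, so I may choose $\theta\in\<\e_1,\dots,\e_n\>$ with $\theta\Phi(\mathcal{B}_0)=\mathcal{B}_0$. Since $\theta\Phi$ preserves each rank-2 face of $\Delta$ and $\mathcal{B}_0$ has exactly one stick per face, $\theta\Phi$ fixes every stick of $\mathcal{B}_0$ individually. It remains to show $\theta\Phi$ fixes \emph{every} stick. Fix a face $\Delta[a_i,a_j]$ and let $\sigma_{ij}$ denote the induced permutation of its four sticks; we already know $\sigma_{ij}$ fixes the stick $s_0\in\mathcal{B}_0$ at this face. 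I claim that $\sigma_{ij}$ lies in the Klein four subgroup $V_4\subset\mathrm{Sym}(4)$ realised by $\<\e_i,\e_j\>$ acting on the face; since $V_4$ acts regularly on the four sticks, fixing $s_0$ then forces $\sigma_{ij}=\mathrm{id}$.

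The subgroup $V_4$ is the common stabiliser of the following two distinct $2$-by-$2$ partitions of the four sticks, both of which $\sigma_{ij}$ preserves: firstly the opposite-pair partition $\bigl\{\{a_ia_j,a_i^{-1}a_j^{-1}\},\,\{a_ia_j^{-1},a_i^{-1}a_j\}\bigr\}$, preserved by the unnamed Lemma following Corollary \ref{c:sticks}; and secondly the $a_i$-sign partition $\bigl\{\{a_ia_j,a_ia_j^{-1}\},\,\{a_i^{-1}a_j,a_i^{-1}a_j^{-1}\}\bigr\}$, which coincides with the partition of sticks at $\Delta[a_i,a_j]$ recording bonds with sticks at $\Delta[a_i,a_k]$ for any fixed $k\neq i,j$ (available because $n\ge 3$). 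The coincidence is established by a short folding argument: $\<a_i^{\epsilon}a_j^{\delta}\>$ and $\<a_i^{\epsilon'}a_k^{\delta'}\>$ are bonded iff $\epsilon=\epsilon'$, because the wedge of the two petals labelled by these words folds its initial $a_i$-edges at the basepoint precisely when $\epsilon=\epsilon'$, in which case the resulting rank-2 free factor contains $\<a_j^{-\delta}a_k^{\delta'}\>$ (a stick at $\Delta[a_j,a_k]$), whereas otherwise the wedge is already fully folded and a direct trace through its three-vertex graph shows that no loop of the form $a_j^{\alpha}a_k^{\beta}$ exists. Since $\Phi$ preserves both opposite pairs and bonds, $\sigma_{ij}$ preserves both partitions, so $\sigma_{ij}\in V_4$. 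The main obstacle is this folding verification; once it is secured, the remainder is bookkeeping with results already in the paper.
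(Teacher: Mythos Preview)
Your argument is correct and constitutes a genuinely different proof from the one the paper gives. The paper argues by induction on $n$: it first establishes the rank-$3$ case via Lemma~\ref{l1} (using the cube picture of Figure~\ref{cube} to see that fixing three well-placed sticks forces the whole cube isometry to be trivial), and then passes to higher rank by restricting $\Phi$ to $\Lk(\<a_1,\dots,a_{n-1}\>)\cong\AF_{n-1}$, invoking the inductive hypothesis there, and mopping up the sticks involving $a_n$ with further applications of Lemma~\ref{l1}.

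Your route avoids induction entirely. You use the simply transitive action of $(\Z/2)^n$ on the $2^n$ snops to normalise so that $\theta\Phi$ fixes one snop (hence one stick per face), and then pin down the residual permutation $\sigma_{ij}$ on each face by showing it preserves two distinct $2+2$ partitions of the four sticks---the opposite-pair partition and the $a_i$-sign partition coming from bonds with a neighbouring face---whose common stabiliser in $\mathrm{Sym}(4)$ is exactly the regular $V_4$. This is essentially the ``cube'' proof that the paper alludes to just before the statement of Proposition~\ref{p:fix-sticks} but does not write out; your two partitions are what encode the hypercube adjacency among codimension-$2$ faces. The folding verification you give for the bond criterion (bonded iff the $a_i$-signs agree) is correct and is the only point where any real work is needed.

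What each approach buys: your argument is cleaner and more uniform for $\AF_n$, and makes the role of the cube $I^n(\Delta)$ explicit. The paper's inductive proof is chosen, as it says, because it transports directly to $\OF_n$, where each face carries only two sticks rather than four and the cube combinatorics degenerate; there the analogue of your $V_4$ argument is unavailable, whereas the inductive scheme with Lemma~\ref{o:l1} replacing Lemma~\ref{l1} goes through with minimal change.
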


We require a lemma. 

\begin{lemma}\label{l1}
If an isometry $\Phi$ of $\FF_3$ fixes the standard apartment $\Delta=\Delta(a,b,c)$ and a stick at $\Delta[a,b]$
then exactly one of $\{\Phi,\ \e_c\circ\Phi\}$ fixes $\Delta$ and all of its sticks.
\end{lemma}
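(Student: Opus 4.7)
My plan is to analyse how $\Phi$ permutes the $12$ sticks of $\Delta$, using three structural facts. Since $\Phi$ fixes $\Delta$ pointwise, it preserves each rank-$2$ face of $\Delta$, and so preserves each of the three \emph{parallelism classes} of sticks (the four sticks at each rank-$2$ face). By Corollary~\ref{c:sticks} and the lemma preceding Proposition~\ref{p:fix-sticks}, $\Phi$ also permutes snops (bonded triples) and preserves the opposite-pair involution on each parallelism class.

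The first step is a reduction to a normal form. The subgroup $\<\e_a,\e_b\>\cong(\Z/2)^2$ fixes $\Delta$ pointwise and acts freely and transitively on the four sticks at $\Delta[a,b]$ (a direct check: $\e_a\<ab\>=\<a^{-1}b\>$, $\e_b\<ab\>=\<ab^{-1}\>$, $\e_a\e_b\<ab\>=\<ba\>$). Because $\e_a$ and $\e_b$ commute with $\e_c$, replacing $\Phi$ by its conjugate by a suitable element of $\<\e_a,\e_b\>$ does not affect the statement of the lemma, so I may assume that $\Phi$ fixes the specific stick $\<ab\>$. Its opposite $\<ba\>$ is then automatically fixed.

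The heart of the argument is a short case split according to how $\Phi$ acts on the two snops containing $\<ab\>$, namely
\[
S_1=\{\<ab\>,\<ac\>,\<b^{-1}c\>\}\ \text{ and }\ S_2=\{\<ab\>,\<ac^{-1}\>,\<cb\>\}.
\]
Either $\Phi$ fixes each of these or swaps them. In either case, parallelism preservation forces the images of the remaining sticks of $S_1\cup S_2$; opposite-pair preservation at $\Delta[a,c]$ and $\Delta[b,c]$ then propagates to fix (respectively, swap in matched pairs) the sticks $\<ca\>,\<a^{-1}c\>,\<bc^{-1}\>,\<bc\>$; and finally the two remaining sticks $\<a^{-1}b\>,\<ab^{-1}\>$ are pinned down by snops (e.g.\ $\{\<a^{-1}b\>,\<ca\>,\<cb\>\}$) all of whose other members are already determined. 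The fixing case gives $\Phi|_{\text{sticks}}=\mathrm{id}$; a short direct comparison shows that the permutation arising in the swapping case is exactly the restriction of $\e_c$ to sticks, so $\e_c\circ\Phi$ fixes every stick.

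The two cases are mutually exclusive because $\e_c$ acts nontrivially on sticks at $\Delta[a,c]$, and this yields the ``exactly one'' conclusion; both $\Phi$ and $\e_c\circ\Phi$ fix $\Delta$ pointwise since $\e_c$ does. I expect the main work to be bookkeeping rather than a conceptual obstacle --- one has to track a dozen sticks while repeatedly invoking the parallelism, opposite-pair, and snop conditions --- since all of the genuine geometric input (that sticks, snops, parallelism classes and opposite pairs are characteristic) has already been supplied in Section~\ref{s:standard}.
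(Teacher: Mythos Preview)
Your proposal is correct and follows essentially the same approach as the paper. The only difference is presentational: the paper phrases the case split in terms of the two sticks at $\Delta[b,c]$ bonded to $\<ab\>$ (i.e.\ the $\Delta[b,c]$-components of your $S_1,S_2$), composes with $\e_c$ immediately in the swap case, and then finishes with a one-line appeal to the cube picture (an isometry of the cube fixing three edges of a face is the identity), whereas you carry out that last step explicitly via opposites and snops.
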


\begin{proof}
The sticks at $\Delta[b,c]$ bonded to $\<ab\>$ are $\<b^{-1}c\>$ and $\<cb\>$, so if $\Phi$ 
fixes $\<ab\>$  then it must either exchange or
fix these sticks. Composing with $\e_c$ if necessary, we may assume that it fixes them. The
action of $\Phi$ as
an isometry of the cube in Figure \ref{cube} then fixes three edges of the top face. The only such isometry
is the identity.
\end{proof}

\noindent{\bf{Proof of Proposition \ref{p:fix-sticks}.}}  We shall proceed by induction. Suppose $n=3$
and consider a standard apartment $\Delta=\Delta(a,b,c)$ fixed by $\Phi$. If $\Phi$ does not fix the
stick $\<ab\>$ then we can compose $\Phi$ with an element of $\<\e_a, \e_b\>$ to arrange that it does. Then
Lemma \ref{l1} tells us that, composing with $\e_c$ if necessary, we may assume that $\Phi$ fixes 
all of the sticks of $\Delta$.

We now assume $n>3$ and consider a standard apartment $\Delta=\Delta(a_1,\dots,a_n)$ fixed by $\Phi$. 
Let ${\rm{Aut}}(F_{n-1}) \hookrightarrow{\rm{Aut}}(F_{n})$ be the subgroup fixing $a_n$ and acting in the
standard way on $\{a_1,\dots,a_{n-1}\}$. Consider the barycentre $V=\<a_1,\dots,a_{n-1}\>$ of the face
opposite $\<a_n\>$. We have $\Lk(V)\cong \AF_{n-1}$, where the isomorphism is ${\rm{Aut}}(F_{n-1})$-equivariant.
By induction, there exists $\theta\in\<\e_1,\dots,\e_{n-1}\>$ such that $\theta\circ\Phi$ fixes $\Delta$
and all of the sticks of $\Delta [a_1,\dots,a_{n-1}]$. Applying
Lemma \ref{l1} to $\Delta[a_1,a_2,a_n]$, we deduce 
that by further composing with $\e_n$ if necessary, we may assume that $\Phi$ fixes $\Delta$,
the sticks of $\Delta [a_1,\dots,a_{n-1}]$ and  the sticks of $\Delta[a_1,a_2,a_n]$. The remaining sticks
are based at $\Delta[a_i,a_n]\subset \Delta[a_1,a_i,a_n]$ with $i>2$. The sticks at $\Delta[a_1,a_i]\subset \Delta[a_1,a_i,a_n]$
are fixed by $\Phi$, as is $\<a_1a_n\>$. Moreover the latter is not fixed by $\e_n\circ\Phi$. So 
Lemma \ref{l1} tells us that $\Phi$ must fix all the sticks of $\Delta[a_1,a_i,a_n]$. This completes the induction.
\qed

\subsection{Supersticks and the end of the proof}

We obtain a more rigid framework of rank-1 vertices in the neighbourhood of an apartment by adding 
{\em supersticks} to sticks. In rank 3, the supersticks associated to an apartment are at distance 2 from
the apartment, but from  $n=4$ onwards they are adjacent to the barycentres of the rank-3 faces of the apartment.

\begin{definition} The {\em supersticks} associated to a standard apartment $\Delta (a_1,a_2,a_3)$ (if $n=3$)
or a rank 3 face  $\Delta [a_1,a_2,a_3]$ (if $n>3$) are the 24 rank 1 factors $\<a_i^{\delta_i} a_j^{\delta_j}a_k^{\delta_k}\>$
with $\{i,j,k\} = \{1,2,3\}$ and $\delta_i=\pm 1$.   
\end{definition}

\begin{lemma}\label{superstick characterization}  A rank 1 free factor of $F_3$ is a superstick
of the standard apartment $\Delta(a,b,c)$ if and only if it is antipodal to each of the rank 2 vertices of $\Delta(a,b,c)$.

For $n>3$, a rank 1 free factor $V<F_n$ is a superstick of the rank 3 face $\Delta[a,b,c]$ if and only if
$d(V,\<a,b,c\>)=1$ and $V$ is antipodal  in
$\Lk_{-}\<a,b,c\> \cong \AF_3$ to each of the rank 2 vertices of $\Delta(a,b,c)$.
\end{lemma}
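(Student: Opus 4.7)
The plan is to prove the $n=3$ case directly using Lemma \ref{complement}, and then to reduce the case $n>3$ to the $n=3$ case via the isomorphism $\Lk_-\<a,b,c\>\cong \AF_3$ from Lemma \ref{l:links}.

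For $n=3$, the rank 2 vertices of $\Delta(a,b,c)$ are $\<a,b\>$, $\<a,c\>$, and $\<b,c\>$. Given a rank 1 free factor $\<u\>$, I apply Lemma \ref{complement} with each of $a,b,c$ in turn in the role of ``$a_n$''. Since elements of $\<a,b\>$ do not involve $c$, the expression $xc^{\pm 1}y$ with $x,y\in\<a,b\>$ is already freely reduced; hence $\<u\>\perp\<a,b\>$ is equivalent to the reduced form of $u$ containing exactly one occurrence of $c^{\pm 1}$. The analogous statements hold for $\<a,c\>$ and $\<b,c\>$. Conjoining all three conditions forces $|u|=3$ with each of $a,b,c$ appearing exactly once up to sign, i.e.~$u$ is an ordering of $a^{\delta_1}b^{\delta_2}c^{\delta_3}$ with $\delta_i\in\{\pm 1\}$. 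This is precisely the definition of a superstick of $\Delta(a,b,c)$, and conversely any such word $u$ satisfies all three antipodality conditions by direct inspection via Lemma \ref{complement}.

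For $n>3$, since $V$ has rank 1 and $\<a,b,c\>$ has rank 3, the condition $d(V,\<a,b,c\>)=1$ forces $V<\<a,b,c\>$, and $V$ is then automatically a free factor of $\<a,b,c\>$ (a free factor of $F_n$ contained in a free factor $A$ is a free factor of $A$, as recalled at the start of Section~2). Under the isomorphism $\Lk_-\<a,b,c\>\cong \AF_3$, the rank 2 vertices of $\Delta[a,b,c]$ correspond to the rank 2 vertices of $\Delta(a,b,c)$ inside $\<a,b,c\>\cong F_3$, and antipodality inside the link corresponds to algebraic antipodality inside $\<a,b,c\>$. Applying the $n=3$ case within $\<a,b,c\>$ therefore identifies $V$ as a superstick of $\Delta[a,b,c]$, and vice versa.

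There is no substantial obstacle. The only point requiring real care is the reading of Lemma \ref{complement} that underlies the $n=3$ argument: because $x,y\in L_{n-1}$ cannot involve $a_n$, the product $xa_n^{\pm 1}y$ is already freely reduced, so $u=xa_n^{\pm 1}y$ for some $x,y\in L_{n-1}$ is equivalent to the reduced form of $u$ containing exactly one letter $a_n^{\pm 1}$. Once this equivalence is in place, the rest is routine, with the definition of superstick set up precisely to match the conjunction of the three antipodality conditions.
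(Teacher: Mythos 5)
Your proof is correct and takes essentially the same route as the paper, whose entire proof is ``This follows immediately from Lemma \ref{complement}''; you have simply spelled out the intended deduction (one occurrence of each basis letter in the reduced form of $u$, hence $|u|=3$, plus the reduction of the $n>3$ case to $n=3$ inside $\<a,b,c\>$).
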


\begin{proof}  
This follows immediately from Lemma \ref{complement}.
\end{proof}

\begin{corollary}\label{cc:sticks} The sets of supersticks associated to standard apartments and their
rank-3  faces are
characteristic in $\AF_n$.
\end{corollary}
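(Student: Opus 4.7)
The plan is to deduce the corollary by combining the standard-apartment rigidity already in hand with the intrinsic characterisation of supersticks just proved. First I would invoke Proposition \ref{p:standard}: every isometry $\Phi$ of $\AF_n$ carries a standard apartment $\Delta(a_1,\dots,a_n)$ to a standard apartment $\Delta(b_1,\dots,b_n)$. By Proposition \ref{rank i}, $\Phi$ is rank-preserving on vertices, so it sends the rank-1 vertices of $\Delta(a_1,\dots,a_n)$ bijectively to those of $\Delta(b_1,\dots,b_n)$. Since a rank-3 face of a standard apartment is the full subcomplex spanned by the free factors generated by nonempty subsets of a 3-element subset of the basis, and this subcomplex is determined by its three rank-1 vertices, $\Phi$ sends rank-3 faces of $\Delta(a_1,\dots,a_n)$ to rank-3 faces of $\Delta(b_1,\dots,b_n)$. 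Likewise it respects the rank-2 vertices lying inside any such face.

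With this bookkeeping in place, the characterisation in Lemma \ref{superstick characterization} can be transported along $\Phi$. For $n=3$ the superstick condition asks only that a rank-1 vertex be antipodal to each of the three rank-2 vertices of $\Delta(a,b,c)$, and algebraic antipodality is an intrinsic metric notion by the Antipode Lemma (Theorem \ref{p:antipode}); hence $\Phi$ permutes the 24 supersticks of $\Delta(a,b,c)$ onto the 24 supersticks of the image apartment.

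For $n>3$ the characterisation additionally requires $d(V,\<a,b,c\>)=1$, which is directly metric, together with antipodality of $V$ to each rank-2 vertex of $\Delta(a,b,c)$ taken inside the link $\Lk_-\<a,b,c\>\cong\AF_3$. By Lemma \ref{l:links} this link is canonically identified with a copy of $\AF_3$, and $\Phi$ restricts to an isometry between such links (it sends the rank-3 factor $\<a,b,c\>$ to its counterpart and is rank-preserving, so it permutes the vertices of the link accordingly). Applying the $n=3$ case of the characterisation inside the link, together with the Antipode Lemma in $\AF_3$, shows that the superstick property is preserved by $\Phi$. Since there is no nontrivial computation to perform, there is no real obstacle here; the only thing to check carefully is the compatibility of $\Phi$ with the identification $\Lk_-\<a,b,c\>\cong\AF_3$, and this is exactly the content of Lemma \ref{l:links}.
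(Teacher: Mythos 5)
Your proof is correct and follows exactly the route the paper intends: the paper leaves this corollary as an immediate consequence of Proposition \ref{p:standard} and Lemma \ref{superstick characterization} (mirroring the proof of Corollary \ref{c:sticks}), and your argument simply spells out that deduction, including the correct use of the Antipode Lemma and of the identification $\Lk_-\<a,b,c\>\cong\AF_3$ from Lemma \ref{l:links} for the case $n>3$.
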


We need one last lemma.

\begin{lemma}\label{l2} 
If an isometry $\Phi$ of $\FF_n$ fixes a standard apartment $\Delta=\Delta(a,b,c)$ (if $n=3$)
or rank-3 face $\Delta=\Delta[a,b,c]$ (if $n>3$) and it fixes the sticks of $\Delta$,
then it  also fixes all of the supersticks of $\Delta$. 
\end{lemma}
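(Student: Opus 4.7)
The plan is to characterize each superstick $\<w\>$ as the unique rank-$1$ vertex of $\AF_n$ contained in two rank-$2$ free factors that $\Phi$ already fixes by hypothesis.

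Write $\<w\> = \<a^{\delta_1}b^{\delta_2}c^{\delta_3}\>$ with $\delta_i \in \{\pm 1\}$ and introduce
\[
U_1 := \<a,\; b^{\delta_2}c^{\delta_3}\>, \qquad U_2 := \<a^{\delta_1}b^{\delta_2},\; c\>.
\]
A folding check (e.g.\ $\{a,\, b^{\delta_2}c^{\delta_3},\, c\}$ is a basis of $\<a,b,c\>$) shows that $U_1$ and $U_2$ are rank-$2$ free factors of $\<a,b,c\>$, and hence of $F_n$. I claim $U_1$ is the \emph{unique} rank-$2$ free factor of $F_n$ containing both the rank-$1$ vertex $\<a\>$ of $\Delta$ and the stick $\<b^{\delta_2}c^{\delta_3}\>$ at $\Delta[b,c]$: any rank-$2$ subgroup containing two distinct rank-$1$ free factors $X, Y$ contains $X\ast Y$, and equality holds whenever $X\ast Y$ already has rank $2$. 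The same reasoning pins $U_2$ down from the stick $\<a^{\delta_1}b^{\delta_2}\>$ at $\Delta[a,b]$ and the rank-$1$ vertex $\<c\>$. Since $\Phi$ fixes every vertex and every stick of $\Delta$, it fixes $U_1$ and $U_2$.

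The heart of the argument is the identification $U_1 \cap U_2 = \<w\>$, which I would establish via the pullback construction of Section 2.3. Applying the signed-permutation element of $W_n \le \Aut(F_n)$ that sends $a \mapsto a^{\delta_1}$, $b \mapsto b^{\delta_2}$, $c \mapsto c^{\delta_3}$ reduces to the case $\delta_1 = \delta_2 = \delta_3 = 1$. Now $\core_*(U_1)$ is a based graph with an $a$-loop wedged to an arc $\ast \xrightarrow{b} v \xrightarrow{c} \ast$, and $\core_*(U_2)$ is symmetrically an arc $\ast' \xrightarrow{a} v' \xrightarrow{b} \ast'$ wedged to a $c$-loop. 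The pullback has $4$ vertices and exactly $3$ edges (one for each label $a, b, c$), forming a triangle through the basepoint with successive edge-labels $a, b, c$, together with a single isolated vertex $(v, v')$. The basepoint component of the pullback is $\core_*(U_1 \cap U_2)$, giving $U_1 \cap U_2 = \<abc\> = \<w\>$.

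To finish, any rank-$1$ free factor of $F_n$ contained in $\<w\>$ must be generated by $w^k$ for some $k\in\Z$, and primitivity in $F_n$ rules out $|k|\neq 1$, so $\<w\>$ is the unique rank-$1$ vertex of $\AF_n$ adjacent to both $U_1$ and $U_2$. Combined with rank preservation (Proposition \ref{rank i}) and the fact that $\Phi$ fixes $U_1$ and $U_2$, this forces $\Phi(\<w\>) = \<w\>$. When $n > 3$, the same discussion takes place inside $\Lk_-\<a,b,c\>\cong\AF_3$, where by definition the supersticks of $\Delta[a,b,c]$ correspond to the supersticks of the standard apartment of that link. The only genuinely computational step in the plan is the pullback identification $U_1 \cap U_2 = \<w\>$; everything else is structural bookkeeping built on previously established results.
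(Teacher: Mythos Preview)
Your proof is correct and follows essentially the same approach as the paper: the paper's $M_1=\<a,bc\>$ and $M_2=\<c,ab\>$ are exactly your $U_1,U_2$ in the case $\delta_1=\delta_2=\delta_3=1$, and the paper likewise identifies the superstick as the unique rank-$1$ factor in $M_1\cap M_2$. Your write-up is more explicit (the pullback computation and the reduction via signed permutations), whereas the paper simply asserts $M_1\cap M_2=\<abc\>$ and says ``the general case is similar''; one small point to tighten is your uniqueness claim for $U_1$, which really uses that both $U_1$ and the candidate $H$ are \emph{free factors} of rank $2$ (a rank-$2$ free factor containing a rank-$2$ free factor equals it), not merely rank-$2$ subgroups.
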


\begin{proof}
Consider first the superstick $\<abc\>$. As $M_1=\<a, bc\><F_n$ is the unique factor of rank 2 adjacent 
to both $\<a\>$ and $\<bc\>$, it must be fixed by $\Phi$. Likewise $M_2=\<c, ab\>$ must be fixed. 
The unique rank-1 factor adjacent to $M_1$ and $M_2$ is $\<abc\>= M_1\cap M_2$, so it too must be fixed by $\Phi$.
The general case is similar.
\end{proof}

\noindent{\bf{End of the Proof of Theorem \ref{thm1}}.} We refer the reader to the
summary of the proof given at the beginning of this section.
Given an automorphism $\Phi$ of $\AF_n$, with $n\ge 3$, we compose
it with an element of ${\rm{Aut}}(F_n)$ so as to assume that it leaves  a standard apartment
$\Delta=\Delta(a_1,\dots,a_n)$ invariant. We use Proposition \ref{p:fix-sticks} to compose $\Phi$ with a further
element of ${\rm{Aut}}(F_n)$ so that it fixes $\Delta$ and all of its sticks. Lemma \ref{l2} then
tells us that $\Phi$ fixes the supersticks of  $\Delta$. We will be done if we can argue that
this adjusted $\Phi$ fixes every standard apartment that is Nielsen adjacent to $\Delta$ and fixes all the sticks
(and hence supersticks) of such an apartment. 

Without loss of generality we may assume that the Nielsen transformation is $\lambda: a_1\mapsto a_1a_2$.
Consider  $\Delta_\lambda = \Delta(a_1a_2,a_2,\dots,a_n)$. The first point to observe is that 
every rank 1 vertex of $\Delta_\lambda$ is a vertex or stick of $\Delta$, and hence is fixed by $\Phi$.
Each vertex of $\Delta_\lambda$ is uniquely determined by its adjacent rank 1 vertices, so 
$\Phi$ must fix the whole of $\Delta_\lambda$.
The second point to observe is that every stick of $\Delta_\lambda$ is a vertex, stick or superstick of $\Delta$, with
the exception of the
sticks at $\Delta [a_1,\, a_1a_2]$. And since these last sticks are distinguished from one another by the 
sticks of $\Delta [a_1,\, a_1a_2, a_3]$ with which they form bonded triples, they too must be fixed.
\qed

\section{$\OF_n$ is rigid: Proof of Theorem \ref{thm2}}

Our proof of Theorem \ref{thm2} follows  the same outline of proof as Theorem \ref{thm1} but there are some
additional difficulties to be overcome in the case of $\OF_n$, particularly with regard to the recognition of 
standard apartments.  

We will typically write $[A]$ for the conjugacy class of a free factor $A<F_n$ but for rank-1 factors abbreviate $[\<u\>]$
to $[u]$, and often write $[a,b]$ for rank-2 factors.

\subsection{Step 1: Distinguishing the ranks of vertices.}\label{s:ostep1}

At various stages in the proof of Theorem \ref{thm1} we used  the isomorphism
$\Lk(A)\cong \FF_{n-1}$ for vertices of rank $n-1$ to facilitate induction arguments.
Lemma \ref{l:links} assures us that such arguments remain valid in $\OF_n$.

The following lemma can be established by choosing $L$ exactly
as in the proof of Lemma \ref{l:ranky}.

\begin{lemma} \label{l:oranky}
If $A<F_n$ is a free factor of rank $n-1$ and $C$ is a free factor of rank $1$, no conjugate of
which is contained in $A$, then
there exists a free factor $L$ of rank $2$ with $C<L$ such that no conjugate of $L$ intersects $A$ non-trivially.
\end{lemma}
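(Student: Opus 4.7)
The plan is to take $L$ exactly as in the proof of Lemma \ref{l:ranky}: after applying a suitable automorphism we may suppose $C=\<a_1\>$, and Corollary \ref{c:loops} splits the analysis into two cases. In the first, $\E_{a_2}(A)$ is finite and we set $L=\<a_1,y\>$ with $y=a_2^M a_1 a_3$ for some $M>\max\E_{a_2}(A)$. In the second, $\core_*(A)$ is a rose with loops labeled $a_2,\dots,a_n$ and we set $L=\<a_1,y\>$ with $y=a_2 a_1^p a_3$ for some $p$ larger than the diameter of $\core_*(A)$. The task beyond Lemma \ref{l:ranky} is to upgrade $L\cap A=1$ to the statement that $L^g\cap A=1$ for every $g\in F_n$, equivalently, that no non-trivial element of $L$ is conjugate into $A$.

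By the second bullet of Section \ref{s:2.2}, $w\in F_n$ is conjugate into the malnormal free factor $A$ if and only if its cyclically reduced form labels an oriented loop in $\core(A)$. So I would classify the cyclically reduced forms of non-trivial elements of $L$. Using the free basis $\{a_1,y\}$, first cyclically reduce $w$ within $L$; this produces either a power of $a_1$, a power of $y$, or an alternating word $a_1^{n_0}y^{m_1}\cdots y^{m_k}$ with all interior exponents non-zero and, at any cyclic seam between two $y$-blocks, matching signs of the adjacent exponents. Because the first and last letters of $y^{\pm 1}$ lie in $\{a_2^{\pm 1},a_3^{\pm 1}\}$, hence avoid $\{a_1^{\pm 1}\}$ and, in the same-sign case, avoid each other's inverses, no further cancellation occurs when this $L$-cyclically reduced word is expanded in the basis $\{a_1,\dots,a_n\}$. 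Hence the cyclically reduced form of $w$ in $F_n$ is either a power of $a_1$ or contains $a_2^{\pm M}$ (resp.\ $a_1^{\pm p}$) as a subword.

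It remains to dispose of both possibilities. If the cyclically reduced form is $a_1^k$ with $k\neq 0$ and $w$ is conjugate into $A$, then $(g^{-1}a_1g)^k\in A$ for some $g$. Since free factors are closed under root extraction---acting on the Bass--Serre tree of a splitting $F_n=A\ast B$, an element whose non-trivial power fixes the $A$-vertex must itself fix it, because edge stabilizers are trivial---we conclude $g^{-1}a_1g\in A$, contradicting the hypothesis that no conjugate of $C$ lies in $A$. If instead the cyclically reduced form contains $a_2^{\pm M}$ (resp.\ $a_1^{\pm p}$) as a subword and labels a loop in $\core(A)$, then $a_2^M\in\sub(A)$ (resp.\ $a_1^p\in\sub(A)$), contradicting the choice of $M$ (resp.\ the choice of $p$, since in the second case any reduced $a_1$-edge path in $\core_*(A)$ is forced into its tree part and so has length at most $\mathrm{diam}(\core_*(A))<p$).

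The main delicacy I anticipate is the cyclic-reduction bookkeeping in the middle paragraph: one must verify that $L$-cyclic reduction already delivers an $F_n$-cyclically reduced word that retains the long subword. This rests on the deliberate construction of $y$, whose extremal letters under both $y$ and $y^{-1}$ avoid collision with the $a_1$-blocks and, after $L$-reduction, also with each other across the cyclic seam. Granted this, the strengthening from $L\cap A=1$ to $L^g\cap A=1$ for all $g$ is immediate.
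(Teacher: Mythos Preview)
Your approach is exactly the paper's: it simply says to choose $L$ as in Lemma \ref{l:ranky} and leaves the verification that no \emph{conjugate} of $L$ meets $A$ to the reader, which is precisely the bookkeeping you carry out. One small slip: in the second case $\core_*(A)$ is a \emph{tree} with loops labeled $a_2,\dots,a_n$ (Corollary \ref{c:loops}), not a rose; your later phrase ``forced into its tree part'' shows you are in fact arguing correctly for that description, so the argument stands.
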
 

\begin{prop} For $n\ge 3$, 
every isometry of $\OF_n$ preserves the set of vertices of rank $i$,
for  $i=1,2,\cdots,n-1$.
\end{prop}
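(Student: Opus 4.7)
The plan is to follow the three-step template of Proposition \ref{rank i}, using the $\OF_n$-analogs of its ingredients already assembled: Lemmas \ref{l:ole2}, \ref{l:links}, \ref{l:lk1}, and \ref{l:oranky}.

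First, I would show that $\OF_n(1)\cup\OF_n(n-1)$ is characteristic by distinguishing intermediate-rank vertices via the join structure of their links. For $[A]$ of rank $i$ with $1<i<n-1$, the link decomposes as a join $\Lk[A]=\Lk_-[A]*\Lk_+[A]$: given $[B]\in\Lk_-[A]$ with $B^w<A$ and $[D]\in\Lk_+[A]$ with $A^u<D$, the chain $B^{wu}<A^u<D$ shows $d([B],[D])=1$, so $\Lk[A]$ has diameter at most $2$. In contrast, for $[V]\in\OF_n(n-1)$ we have $\Lk[V]=\Lk_-[V]\cong \OF_{n-1}$ by Lemma \ref{l:links}, which has infinite diameter (by Bestvina--Feighn for $n-1\ge 3$, or via the Farey graph when $n-1=2$); and for $[C]\in\OF_n(1)$, Lemma \ref{l:lk1} identifies $\Lk_{\OF_n}([C])$ with $\Lk_{\AF_n}(C)$, which contains vertices at distance $>2$ by the same explicit construction used in Lemma \ref{1 and n-1}.

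Second, I would distinguish $\OF_n(1)$ from $\OF_n(n-1)$ via an asymmetric metric property of antipodal pairs. The $\OF_n$-analog of Lemma \ref{l:link-antip} holds by the same proof: if $A*C=F_n$, then for any neighbor $[L]$ of $[A]$, choosing $L<A$ as representative, the free factor $L*C$ is proper (rank $\le n-1$) and contains both $L$ and $C$, so $d([L],[C])\le 2$. The $\OF_n$-analog of Proposition \ref{p:extend} follows directly by combining Lemmas \ref{l:oranky} and \ref{l:ole2}: given rank-$1$ $[C]$ and rank-$(n-1)$ $[A]$ with $d([A],[C])>1$, Lemma \ref{l:oranky} produces a rank-$2$ factor $L\supset C$ no conjugate of which meets $A$ nontrivially, so $d([C],[L])=1$ while $d([L],[A])>2$. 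Applied to a pair $([A],[C])$ with $A*C=F_n$ (which, after composing with a suitable element of $\Out(F_n)$, can be arranged freely), these asymmetric properties rule out any isometry that swaps rank $1$ with rank $n-1$.

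Finally, intermediate ranks (relevant only when $n\ge 5$) are handled by induction on $n$. Given an isometry $\Psi$ of $\OF_n$ and a vertex $[A]$ of rank $i<n-1$, choose a rank-$(n-1)$ vertex $[V]$ with $A$ conjugate into $V$; since $\Out(F_n)$ acts transitively on rank-$(n-1)$ classes while preserving rank, we may compose $\Psi$ with an element of $\Out(F_n)$ to arrange $\Psi([V])=[V]$. Then $\Psi$ restricts to an isometry of $\Lk_-[V]\cong\OF_{n-1}$ by Lemma \ref{l:links}, and induction delivers rank preservation. The main subtlety, parallel to the $\FF_n$ case, will be the logical bookkeeping in step two: the metric invariant ``there exists $[W]$ with $d([V],[W])=3$ such that every neighbor of $[V]$ lies within distance $2$ of $[W]$'' should characterize rank-$(n-1)$ vertices among $\OF_n(1)\cup\OF_n(n-1)$, and the case where the witness $\Psi([W])$ happens to be rank $1$ rather than rank $n-1$ may require an additional construction of a far-away rank-$2$ neighbor (via the fully irreducible dynamics of Proposition \ref{l:loxo}) or an $\Out(F_n)$-normalization to reduce to the case already handled.
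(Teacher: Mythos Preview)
Your proposal is correct and follows essentially the same three-step template as the paper, invoking exactly the same lemmas (\ref{l:ole2}, \ref{l:links}, \ref{l:lk1}, \ref{l:oranky}) at the corresponding points. The bookkeeping concern you flag in the last paragraph---ruling out a rank-$1$ witness $[W]$ for a rank-$1$ vertex $[C]$---is left equally implicit in the paper and is easily dispatched without fully irreducible dynamics: extend $c$ to a basis, let $b_i$ be a basis letter appearing in the cyclically reduced form of $w$, and take $L$ to be the rank-$(n-1)$ factor generated by the other basis elements; then $c\in L$ while no conjugate of $w$ lies in $L$, so $d([L],[W])>2$ by Lemma~\ref{l:ole2}.
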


\begin{proof} The proof is a straightforward adaptation of the proof of Proposition \ref{rank i}.
To distinguish  vertices of rank $1$ or $n-1$ from those of rank $i$ with $1<i<n-1$, we 
prove that the former are not joins, and we do this by showing that they have diameter greater than $2$.
For $n=3$ there is nothing to prove, so we assume $n\geq 4$ and proceed by induction.
The link of a vertex of rank $n-1$ is isomorphic to $\OF_{n-1}$, which has infinite diameter (alternatively,
as in Lemma \ref{1 and n-1}, one can see easily that it has diameter at least $3$). For vertices of rank 1,
Lemma \ref{l:lk1} tells us that $\Lk_{\OF_n}([C])\cong \Lk_{\AF_n}(C)$, so the proof for $\FF_n$ applies directly.

The argument for  distinguishing  vertices of rank $n-1$ from vertices of rank $1$ also 
follows the case of $\AF_n$:
the proof of Lemma \ref{l:link-antip} shows that for every vertex $[A]\in\OF_n$ of rank $n-1$
there exist vertices $[C]$ of rank $1$ such that $d([A], [L]) =1$ implies $d([C], [L]) =2$, and
Lemmas \ref{l:oranky} and Lemma \ref{l:ole2} tell us this statement becomes false
if we reverse the roles of $A$ and $C$.

The inductive argument in the final paragraph of Section \ref{s:rank} remains
valid in the setting of $\OF_n$.
\end{proof}

\subsection{The Antipode Lemma}

\begin{definition} A rank $n-1$ vertex $[\Lambda]\in \OF_n$ and a rank 1 vertex $[u]\in \OF_n $
are  {\em{algebraically antipodal}} if there are factors $\Lambda_0\in [\Lambda]$
and $\<u^\gamma\>\in [u]$ such that  $\Lambda_0*\<u^\gamma\>=F_n$. We write
$[\Lambda]\perp [u]$. 

$[\L]$ and $[u]$ are {\em{metrically antipodal}} in $\OF_n$ if $d([u], [L])=2$
for all free factors $L$  
 with $d([\Lambda],[L])=1$.
\end{definition} 

\begin{theorem}[The Antipode Lemma]\label{O:antipode}
$[\Lambda]\perp [u]$  if and
  only if $[\L]$ and $[u]$ are {{metrically antipodal}} 
\end{theorem}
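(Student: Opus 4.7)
The plan is to adapt the proof of the $\FF_n$ Antipode Lemma (Theorem \ref{p:antipode}). The key simplification in the $\OF_n$ setting is that algebraic antipodality already quantifies over all conjugates of $u$, so the delicate sub-case of the $\FF_n$ proof, where some but not all conjugates of $u$ are antipodal to $\Lambda$, does not arise.

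\textbf{Forward direction.} Suppose $[\Lambda]\perp[u]$; after conjugating a chosen representative we may assume $\Lambda*\<u\>=F_n$. Passing to $F_n^{\rm ab}=\Z^n$, the image of any $u^\gamma$ equals that of $u$, which is nontrivial modulo the image of $\Lambda$, so no conjugate of $u$ lies in $\Lambda$. Now if $d_\O([\Lambda],[L])=1$, conjugate $L$ so that $L<\Lambda$; then $L$ is a free factor of $\Lambda$, and $V:=L*\<u\>$ is a proper free factor of $F_n$, giving a length-$2$ path $[L]-[V]-[u]$. The cases $d_\O([L],[u])\in\{0,1\}$ are excluded because no conjugate of $u$ lies in $L\subset\Lambda$, so $d_\O([L],[u])=2$.

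\textbf{Backward direction (contrapositive).} Assume $[\Lambda]\not\perp[u]$; I will construct $L$ with $d_\O([\Lambda],[L])=1$ and $d_\O([L],[u])\ne 2$. There are two cases.

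\emph{Case A: some conjugate $u^\gamma$ lies in $\Lambda$.} Since any intersection of free factors of $F_n$ is a free factor, $\<u^\gamma\>=\<u^\gamma\>\cap\Lambda$ is a free factor of $\Lambda$. Extend it to a rank-$(n-2)$ free factor $L$ of $\Lambda$ (when $n=3$, take $L=\<u^\gamma\>$). Then $L$ is a free factor of $F_n$ with $d_\O([L],[\Lambda])=1$ and $d_\O([L],[u])=1\ne 2$.

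\emph{Case B: no conjugate of $u$ lies in $\Lambda$.} Combined with our hypothesis, no conjugate of $u$ is antipodal to $\Lambda$, so $\pi_\Lambda[u]$ is defined (Section \ref{s:subfactor}). Using $\Lk_{\OF_n}([\Lambda])\cong\OF_{n-1}$ together with the infinite diameter of $\OF_{n-1}$ (Bestvina-Feighn), I take $L=f^k(L_0)<\Lambda$ of rank $n-2$, with $f$ a fully irreducible automorphism of $\Lambda$, $L_0$ any initial rank-$(n-2)$ free factor, and $k$ so large that $d_{\OF_{n-1}}([L],\pi_\Lambda[u])>2\delta$. If $d_\O([L],[u])\le 2$, then (as ranks force $d\ne 0,1$) there is $[B]$ with $d_\O([L],[B])=d_\O([B],[u])=1$; up to conjugation $L<B$ and $u^\gamma\in B$, and malnormality of $\Lambda$ excludes $[B]=[\Lambda]$. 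If $\pi_\Lambda[B]$ is defined, the coarse Lipschitz property forces $d_{\OF_{n-1}}([L],\pi_\Lambda[u])\le 2\delta$, a contradiction.

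\textbf{Main obstacle.} The principal technical difficulty is the remaining sub-case of Case B in which $\pi_\Lambda[B]$ is undefined, i.e., $B$ contains a primitive $v$ with $\Lambda*\<v^w\>=F_n$ for some $w$. In this situation $B$ has rank $n-1$ and contains $L=f^k(L_0)$, a conjugate of $u$, and a conjugate of the antipodal element $v$. The plan is to import the folding analysis (Cases 1–3) of the proof of Theorem \ref{p:antipode}, applied to $\core_*(B)=\core_*(L)\vee\core_*\<u^\gamma\>$, using that $\injrad(\core(L))\to\infty$ (Corollary \ref{c:injrad}) and Corollary \ref{l:no-tiddler} to obtain a contradiction once $k$ is sufficiently large, thereby completing the proof.
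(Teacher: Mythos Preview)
Your forward direction and Case~A are correct and match the paper. Case~B is also correct up to the point you flag as the ``main obstacle'', and that obstacle is a phantom arising from a misreading of Section~\ref{s:subfactor}. The condition for $\pi_\Lambda$ to be undefined on $[B]$ is that the conjugacy class $[B]$ contains a representative $B^w$ with $\Lambda\ast B^w=F_n$; it is about $B$ itself being antipodal to $\Lambda$, not about $B$ containing some primitive $v$ with $\<v^w\>\perp\Lambda$. You have already shown that any midpoint $B$ on a path $[L]$--$[B]$--$[u]$ must have rank $n-1$ (the sub-case $B\subsetneq L^w$ is ruled out because it would force a conjugate of $u$ into $\Lambda$). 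Since $\rank B + \rank\Lambda = 2(n-1)>n$ for $n\ge 3$, no conjugate of $B$ can satisfy $\Lambda\ast B^w=F_n$, so $\pi_\Lambda[B]$ is automatically defined. Your coarse-Lipschitz estimate then yields the contradiction and the proof is complete. This is exactly the paper's argument: it simply cites the second paragraph of the proof of Theorem~\ref{p:antipode}, which is the subfactor-projection step you have reproduced.

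As an aside, your proposed rescue via the folding analysis would not have worked as stated. That analysis in Theorem~\ref{p:antipode} depends on $u$ itself having a conjugate antipodal to $\Lambda$, so that after normalisation $u=wa_nw^{-1}$ and $\core_*\<u\>$ is a lollipop whose loop is the single letter $a_n$; the three cases hinge on this shape. In your hypothetical sub-case no conjugate of $u$ is antipodal, so the cyclically reduced form of $u$ contains at least two occurrences of $a_n^{\pm 1}$ and the lollipop has the wrong shape; the case analysis does not transfer. Fortunately it is not needed.
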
 
 
\begin{proof} As was the case for $\AF_n$, it is easy to see that if $[\Lambda]\perp [u]$
then $[\L]$ and $[u]$ are {{metrically antipodal}}, and it is obvious that if $u$ is conjugate into $\Lambda$
then $[\L]$ and $[u]$ are not {{metrically antipodal}}. So what we must argue is that if no conjugate of
$u$ is contained in $\Lambda$ and  no conjugate of
$u$ is antipodal to $\Lambda$, then there is a free factor $L<\Lambda$ such that $d([\<u\>], [L])>2$.
This is what we proved  in the second paragraph of the proof of Theorem \ref{p:antipode}.
\end{proof}

\subsection{Step 2: Recognising Standard Apartments} 

The reader should compare the following definition to Definition \ref{d:apartment}. The more cumbersome
definition here reflects the fact that  in $\OF_n$ {\em apartments are not uniquely determined by their rank 1 vertices}.
This will cause us considerable difficulty, as will the fact that standard apartments  are difficult to characterise using the Antipode Lemma alone; see Example \ref{ex:bad} and Section \ref{s:fakes}.

\begin{definition}\label{d:O-apartment}
An {\em apartment} in $\OF_n$ is the image of a simplicial embedding 
$\sigma:\partial\Delta_{n-1}\hookrightarrow\OF_n$ such that 
$\rank(\sigma(S)) = |S|$ for all $S\subset {\text{\bf n-1}}$. The apartment is {\em standard} if 
it is the image under $\AF_n\to \OF_n$ of a standard apartment in $\AF_n$.
We shall maintain the notation $\Delta(a_1,\dots,a_n)$ for the standard apartment
associated to the basis $\{a_1,\dots,a_n\}$ and the notation $\Delta[T]$ for its faces;
if $|T|=k+1$ then $\Delta[T]$ is a {\em standard rank-$k$ face}.
\end{definition}

\begin{definition} [Sticks, supersticks, bonded triples] We define the
sticks, supersticks and bonded triples for standard faces in $\OF_n$  to be the images of the sticks, supersticks
and bonded triples in $\AF_n$.
For a standard apartment $\Delta=\Delta(b_1,\dots,b_n)$, the {\em{sticks of $\Delta$ at the rank-2 face 
$\Delta[b_i,b_j]$}}
are the rank 1 vertices of the form $[b_i^\epsilon b_j^\delta]$ (of which there are only two, because $b_i^\epsilon b_j^\delta$ and $ b_j^\delta b_i^\epsilon$ are conjugate and $[x]=[x^{-1}]$).
\end{definition}

\begin{remark}\label{r:sticks}
(1) Considerable care is needed with this definition: the  {\em ``sticks of $\Delta$ at the face $\Delta[b_i,b_j]$" }
depend on 
$\Delta$ and not just $\Delta[b_i,b_j]$ and its neighbours $[b_i], [b_j]$.
 Indeed, if one drops the reference to $\Delta$ then there are infinitely many sticks at
$\Delta[b_i,b_j]$. To see this note, for example, that for any $u\in \<a_1, a_2\>$, the triple $[ua_1u^{-1}, a_2 ],\, [ua_1u^{-1}],\, [a_2]$ is identical to
$[a_1,a_2],\, [a_1],\, [a_2]$, but the sticks of $\Delta (ua_1u^{-1}, a_2,\dots,a_n)$ at $[ua_1u^{-1}, a_2 ]=[a_1,a_2]$ are $[ua_1u^{-1}a_2]$ and $[ua_1^{-1}u^{-1}a_2]$,
whereas the sticks of $\Delta (a_1, a_2,\dots,a_n)$ at $[ua_1u^{-1}, a_2 ]=[a_1,a_2]$ are $[a_1a_2]$ and $[a_1^{-1}a_2]$.
\smallskip

(2) As $\Delta=\Delta(b_1,\dots,b_n)$ has only two sticks at $\Delta[b_i,b_j]$,  
it has $2 {n\choose 2}$ sticks in total. There are
8 supersticks associated to each 
standard apartment (if $n=3)$ or rank 3 face (if $n>3$). 
\smallskip

(3) It is no longer useful to discuss
which pairs of  sticks are bonded, because any pair of sticks associated to a rank 3 face will be bonded,
but it remains true and useful that any two sticks in a bonded triple uniquely define the third.
\smallskip

Passing to conjugacy classes  $A\mapsto [A]$ preserves the relation of being algebraically antipodal, 
so sticks of an apartment remain antipodal to the barycentres of opposite faces. 
But at this stage we do not have a metric
characterisation of sticks   (as in Lemma \ref{stick characterization})  
because we do not yet know that
isometries of $\OF_n$ take standard apartments to standard apartments. 
\end{remark}

\begin{figure}[h]
\begin{tikzpicture}[scale=4]
\draw (0,0)--(1,0);
\draw[rotate=60] (0,0)--(1,0);
\draw[shift={(1,0)},rotate=120] (1,0)--(0,0);

\draw[shift={(1/2,0)},rotate=-2*180/5] (0,0)--(1/2,0);
\draw[shift={(1/2,0)},rotate=-3*180/5] (0,0)--(1/2,0);

\draw[shift={(1/4,1.732/4)},rotate=60+2*180/5] (0,0)--(1/2,0);
\draw[shift={(1/4,1.732/4)},rotate=60+3*180/5] (0,0)--(1/2,0);

\draw[shift={(3/4,1.732/4)},rotate=-60+2*180/5] (0,0)--(1/2,0);
\draw[shift={(3/4,1.732/4)},rotate=-60+3*180/5] (0,0)--(1/2,0);

\node [below left] at (0,0) {$a$};
\node [below right] at (1,0) {$b$};
\node [above] at (1/2,1.732/2) {$c$};

\node [below] at (1/3,-1.732/4-0.05) {$ab$};
\node [above] at (1.1,0.8) {$bc$};
\node [above] at (-0.1,0.8) {$ac$};

\node [below] at (2/3,-1.732/4-0.05) {$a^{-1}b$};
\node [below] at (1.3,0.55) {$b^{-1}c$};
\node [below] at (-0.3,0.55) {$a^{-1}c$};

\end{tikzpicture}
\caption{The apartment $\Delta(a,b,c)$ in $\OF_3$ with its 6 sticks.} 
\label{sticks}
\end{figure}
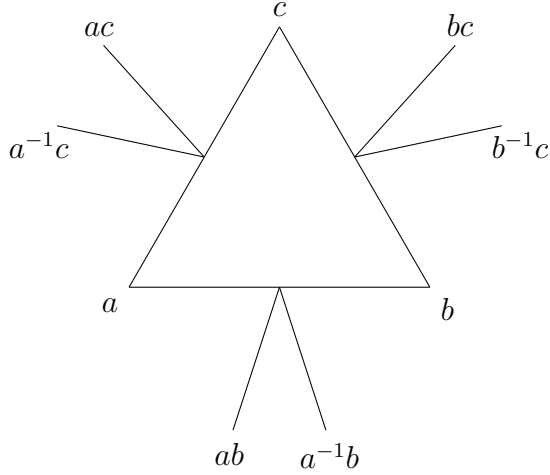

\begin{example}\label{ex:bonds} $\Delta(a, b,c )$ has four bonded triples (snops) in $\OF_3$
\begin{itemize} 
\item $ab, b^{-1}c, ac$,
\item $ab, bc, a^{-1}c$,  
\item $a^{-1}b, bc, ac$,
\item $a^{-1}b, b^{-1}c, a^{-1} c$.
\end{itemize}
For any pair of sticks chosen from two rank 2 faces, there is a
unique stick at the third face that  forms a bonded triple (snop) with that pair.

The eight supersticks of $\Delta(a, b,c )$ in $\OF_3$ are
\begin{itemize} 
\item  $abc,\ abc^{-1},\  ab^{-1}c,\  ab^{-1}c^{-1},\  acb,\ acb^{-1},\  ac^{-1}b,\ ac^{-1}b^{-1}c.$
\end{itemize}
\end{example}

\begin{example}\label{ex:bad}
We describe an example of a fake (i.e. non-standard) apartment of $\OF_3$ in
which all pairs of opposite vertices are antipodal.  

Starting with the standard apartment $\Delta(a,b,c)$, we replace $[a,b]$ by
$[a, \gamma b \gamma^{-1}]$ with $\gamma=baca^{-1}$ to obtain the face $\Delta'$.
 The graph $\core \<a, \gamma b \gamma^{-1}\>$
consists of two loops labeled $a,b$ joined by an arc labeled $baca^{-1}$. To
see that $[a, \gamma b \gamma^{-1}]$ is antipodal to $[c]$, we glue a loop labeled $c$ to one of the
endpoints of the edge of $\core \<a, \gamma b \gamma^{-1}\>$ labeled $c$ and 
 fold to obtain the rose $R_3$. To see that the apartment $\Delta'$ is fake, observe that it has
 no sticks at $[a, \gamma b \gamma^{-1}]$: more precisely, 
 there are no rank 1 factors $[u]$ adjacent to $[a, \gamma b \gamma^{-1}]$
 that are antipodal to both $[a,c]$ and $[b,c]$. Indeed, any cyclically reduced word in the conjugacy class $[u]$
 must label a loop in  $\core \<a, \gamma b \gamma^{-1}\>$ that is not $a^{\pm 1}$
or $b^{\pm 1}$, and the label on any such loop contains more than one occurence of both $a$ and $b$, so is not antipodal
to $[a,c]$ or $[b,c]$.
\end{example}

Fortunately, 
the problem identified in this example is the only new obstruction to recognising standard apartments in rank $3$.

\begin{prop}\label{apartments}
Let $\Delta$ be an apartment in $\OF_3$ and assume
\begin{enumerate}[(1)]
\item opposite vertices of $\Delta$ are antipodal, and
\item $\Delta$ has ``a potential stick" at each rank 2 vertex, i.e. there is an adjacent rank 1 vertex
that is antipodal to the other two rank 2 vertices of $\Delta$.
\end{enumerate}
Then $\Delta$ is a standard apartment.
\end{prop}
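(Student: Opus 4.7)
The plan is to apply an element of $\Out(F_3)$ to bring $\Delta$ into a nearly standard form, then use the potential stick condition to kill the remaining twists. By condition~(1), $[z]\perp[Z]$ gives a free factorization $Z_0 * \<z_0\> = F_3$, so I apply an outer automorphism to assume $[Z] = [\<a_1, a_2\>]$ and $[z] = [a_3]$. Since $[y]$ is adjacent to $[Z]$, it has a primitive representative $\bar y \in \<a_1, a_2\>$ (primitive in $\<a_1,a_2\>$ by malnormality plus the fact that intersections of free factors are free factors), and after a further outer automorphism supported in $\Aut(\<a_1, a_2\>)$ (extended to fix $a_3$), I may assume $[y] = [a_2]$. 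Choose a primitive representative $\bar x \in \<a_1, a_2\>$ of $[x]$.

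Lemma~\ref{lift}(2) then gives $[X] = [\<a_2^\gamma, a_3\>]$ for some $\gamma \in F_3$, and after extending $\bar x$ to a basis of $\<a_1, a_2\>$ and then to a basis of $F_3$, Lemma~\ref{lift}(2) likewise gives $[Y] = [\<\bar x^\delta, a_3\>]$ for some $\delta \in F_3$. It remains to show that $\gamma, \delta$ can be taken trivial and $\bar x = a_1$ (after one last normalization inside $\Aut(\<a_1, a_2\>)$ fixing $a_2$).

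This last step uses condition~(2). The potential stick $[u]$ at $[Z]$ has a primitive representative $\bar u \in \<a_1, a_2\>$ and satisfies $[u]\perp[X]$. A free factorization $\<\bar u^g\> * X_0 = F_3$ together with Lemma~\ref{complement}, applied after conjugating $X_0$ to $\<a_1, a_2\>$ in a suitable basis of $F_3$, forces $\bar u^g$ to have the form $x v^{\pm 1} y$ with $x, y \in X_0$ and $v$ a complement generator. Since $\bar u \in \<a_1, a_2\>$, comparing both descriptions via a Stallings-folding analysis---paralleling the three-case split in the proof of the Antipode Lemma (Theorem~\ref{p:antipode}) according to how the two vertices identified during folding lie relative to $\core_*(X_0)$---forces $\gamma$ to be trivial modulo $\<a_2, a_3\>$, giving $[X] = [\<a_2, a_3\>]$. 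Symmetric arguments using the potential sticks at $[X]$ and $[Y]$, combined with condition~(1) at the pairs $[x]\perp[X]$ and $[y]\perp[Y]$, force $\delta$ trivial and $\bar x \sim a_1$ in $\<a_1, a_2\>$, so that $\Delta = \Delta(a_1, a_2, a_3)$.

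The hard part is the folding analysis that converts the existence of a potential stick into a constraint on the twist $\gamma$. Example~\ref{ex:bad} shows that condition~(1) alone permits twisted rank-2 vertices of the form $[\<a_2^\gamma,a_3\>]$, and the absence of a potential stick at such a twisted vertex is precisely what condition~(2) rules out; making this quantitative via the fold-theoretic case analysis is the central technical obstacle.
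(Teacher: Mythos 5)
Your normalization of $\Delta$ (placing $[Z]=[\<a_1,a_2\>]$, $[z]=[a_3]$, $[y]=[a_2]$, and invoking Lemma \ref{lift}(2) to write the remaining rank-2 vertices as $[\<a_2^\gamma,a_3\>]$ and $[\<\bar x^\delta,a_3\>]$) matches the paper's set-up, and you have correctly identified that condition (2) is what must kill the conjugating words $\gamma,\delta$. But there is a genuine gap precisely where you locate ``the central technical obstacle'': the step converting the existence of a potential stick into the conclusion $\gamma\in\<a_2,a_3\>$ is never carried out, only asserted to follow from an unspecified ``Stallings-folding analysis paralleling the three-case split'' in the proof of Theorem \ref{p:antipode}. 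Since Example \ref{ex:bad} shows that condition (1) alone is insufficient, this step is the entire content of the proposition, and deferring it leaves the proof incomplete.

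Beyond being unexecuted, the route you propose for that step is doubtful. You run the potential-stick argument at the \emph{standard} vertex $[Z]=[\<a_1,a_2\>]$, asking for a primitive class $\bar u\in\<a_1,a_2\>$ antipodal to $[X]$ and $[Y]$, and claim this constrains the twist $\gamma$ sitting in the \emph{adjacent} vertex $[X]=[\<a_2^\gamma,a_3\>]$. That would require a non-existence statement over all primitive $\bar u\in\<a_1,a_2\>$ whenever $\gamma$ is nontrivial, which is nothing like a finite check; moreover, twisted rank-2 factors do admit antipodes of simple form (in Example \ref{ex:bad} the twisted factor $[a,\gamma b\gamma^{-1}]$ is antipodal to $[c]$), and the obstruction exhibited there lives at the twisted vertex itself, not at its neighbours. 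The paper's argument instead uses the potential stick \emph{at} the twisted vertex $V=[b,\gamma a\gamma^{-1}]$: such a stick is conjugate into $V$ and antipodal to $[b,c]$, so by Lemma \ref{complement} its cyclic word contains exactly one $a^{\pm1}$ and labels a tight loop in the dumbbell graph $\core\<b,\gamma a\gamma^{-1}\>$; enumerating such loops gives only $a^{\pm1}$ (excluded because $[a]$ is not antipodal to the third rank-2 vertex) and $a^{\pm1}\gamma^{-1}b^p\gamma$, which has a single $a^{\pm1}$ only when $\gamma$ contains no $a$. This elementary loop count --- not a folding case analysis --- is the missing ingredient; the same count at the last rank-2 vertex $[\<a,\delta c\delta^{-1}\>]$, run against both $[b,c]$ and $[a,b]$, forces $\delta$ to be a power of $b$ and completes the proof.
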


\begin{proof} Let $\{a,b,c\}$ be a basis for $F_3$.
We may assume that $\Delta$ has opposing vertices $[a]$ and $[b,c]$.
By applying an automorphism of $F_3$ that fixes $a$ and leaves $\<b,c\>$
invariant, we can assume that one of the rank 1 vertices adjacent to $[b,c]$ is $[b]$.
The rank 2 vertex $V$ between $[a]$ and $[b]$ is then $[b, \gamma a \gamma^{-1}]$ for some 
$\gamma\in F_3$. If $[u]$ is a potential stick of $\Delta$ at $[b, \gamma a \gamma^{-1}]$, then it is antipodal
to $[b,c]$ and hence the cyclically reduced form of $u$ contains exactly one occurence of $a$, by Lemma \ref{complement}. 
This word labels a tight (i.e. locally-injective) loop in $\core\<b, \gamma a \gamma^{-1}\>$.
The only tight loops
with a single occurence of $a$ in their label, besides  $a^{\pm 1}$, are the loops labeled $a^{\pm 1}\gamma^{-1}b^p\gamma$
with $p\neq 0$, and these only qualify if there is no occurence of $a$ in $\gamma$. 
The loops  $a^{\pm 1}$ can be excluded as potential sticks because they are not antipodal to the rank-2 vertex opposite
$[b]$, since that already contains a conjugate of $a$. Thus the existence of a potential 
stick at $V$ forces $\gamma\in\<b,c\>$,
and after applying the automorphism that fixes $b$ and $c$ and sends $a\mapsto \gamma^{-1}a \gamma$ we may assume
$V=[a,b]$. 

Consider now the  rank 1 vertex of $\Delta$ opposite $V$; call it $[x]$. Since $[x]$ is antipodal to $[a,b]$, the cyclically reduced word 
conjugate to $x$ contains exactly one $c^{\pm 1}$, and since
$x$ is conjugate into $\<b,c\>$ we may assume (by conjugating and replacing $x$ with $x^{-1}$)
that $x=b^mc$ for some $m$. After applying the
automorphism that fixes $a,b$ and sends $c\mapsto b^{-m}c$, we have
$x=c$. Then $\Delta$ has 5 of its vertices in common with the standard
apartment $\Delta(a,b,c)$, and the last one is the conjugacy class of a factor of the form
$H=\<a, \delta c \delta^{-1}\>$. The labeled graph $\core(H)$ has
loops $a$ and $c$ connected by an arc $\delta$. Repeating the argument used to analyse $V$,
we see that $H$ can only contain a rank 1 factor antipodal to $[b,c]$ if $\delta$ contains no $c$, and it
can only contain a rank 1 factor antipodal to $[a,b]$ if $\delta$ contains no $a$. Thus  
$\delta=b^q$, and  the automorphism of $F_3$ that fixes $a$ and $b$ and
sends $c\mapsto b^{-q} c b^q$ will map $\Delta$ to the  standard apartment $\Delta[a,b,c]$.
\end{proof}

\begin{corollary}\label{n=3 good}
Isometries of $\OF_3$ take standard apartments to standard apartments.
\end{corollary}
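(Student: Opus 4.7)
The plan is to deduce the corollary directly from Proposition \ref{apartments}, using the Antipode Lemma (Theorem \ref{O:antipode}) to transfer the hypotheses across an arbitrary isometry. The key observation is that both hypotheses of Proposition \ref{apartments}, namely (1) opposite vertices being antipodal and (2) the existence of a potential stick at each rank 2 vertex, are purely metric conditions in $\OF_3$.

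First I would verify that if $\Phi$ is an isometry of $\OF_3$ and $\Delta$ is a standard apartment, then $\Phi(\Delta)$ is at least an apartment in the sense of Definition \ref{d:O-apartment}. This is immediate from the rank-preservation result proved in Section \ref{s:ostep1}: $\Phi$ sends rank $i$ vertices to rank $i$ vertices, and it preserves simplicial structure, so the image of $\sigma:\partial\Delta_{n-1}\hookrightarrow\OF_3$ is again a rank-respecting simplicial embedding of the boundary of the 2-simplex.

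Next I would check condition (1). For the standard apartment $\Delta=\Delta(a,b,c)$, each rank 1 vertex is algebraically antipodal to the opposite rank 2 vertex (for example $F_3=\<a\>\ast\<b,c\>$), so opposite vertices are metrically antipodal by the Antipode Lemma. Since metric antipodality is defined purely in terms of distances in $\OF_3$, it is preserved by the isometry $\Phi$, so opposite vertices of $\Phi(\Delta)$ are metrically antipodal, and hence algebraically antipodal by the other direction of Theorem \ref{O:antipode}.

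Finally I would check condition (2). For each rank 2 vertex $V$ of $\Delta(a,b,c)$, there is a stick at $V$: e.g.\ at $V=[a,b]$ the class $[ab]$ is adjacent to $[a,b]$ and antipodal to both $[a,c]$ and $[b,c]$ (by Lemma \ref{complement}). Adjacency is a metric condition ($d=1$) and antipodality is metric by the Antipode Lemma, so the image under $\Phi$ of any such stick is a rank 1 vertex adjacent to $\Phi(V)$ and metrically antipodal to the images of the other two rank 2 vertices; hence it is a potential stick of $\Phi(\Delta)$ at $\Phi(V)$. With both hypotheses of Proposition \ref{apartments} verified, we conclude that $\Phi(\Delta)$ is standard. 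Since the argument uses only general properties of $\Phi$ and of standard apartments, no step presents a genuine obstacle: the substantive work was already done in proving the Antipode Lemma and Proposition \ref{apartments}, and this corollary is essentially a repackaging.
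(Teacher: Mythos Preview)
Your proposal is correct and takes essentially the same approach as the paper: combine rank preservation (Section \ref{s:ostep1}), the Antipode Lemma (Theorem \ref{O:antipode}), and the characterisation in Proposition \ref{apartments}. The paper's own proof is just a one-sentence summary of exactly these ingredients, and your version simply spells out the verification of conditions (1) and (2) in more detail.
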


\begin{proof} 
In Step 1 (section \ref{s:ostep1})
we proved that isometries of $\OF_3$ preserve rank,  in the Antipode Lemma we proved that 
they send antipodal pairs to antipodal pairs, and in Proposition \ref{apartments} we characterised standard
apartments in terms of these invariants.
\end{proof}

The last lemma we need before concluding that isometries preserve standard apartments is the following. 
The fake apartments described in Section \ref{s:fakes} illustrate the need for condition (3) in this lemma.

\begin{lemma} \label{l:build-up}
Let $n\ge 3$. An apartment $\Delta$ in $\OF_n$ is standard if and only if it
satisfies the following conditions:
\begin{enumerate}
\item Each rank $(n-1)$ face of $\Delta$ is standard.
\item Every rank 1 vertex of $\Delta$ is antipodal to the barycentre of the opposite face.
\item Adjacent to each rank $(n-1)$ vertex $V$ of $\Delta$, there is a rank $1$ vertex that
is antipodal to every  rank $(n-1)$ vertex  of $\Delta$ other than $V$.
\end{enumerate}
\end{lemma}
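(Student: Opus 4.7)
The ``only if'' direction is immediate: a standard apartment $\Delta(b_1,\dots,b_n)$ satisfies (1) trivially, (2) by algebraic antipodality of a basis, and (3) via the rank-1 vertex $[b_{j_1}b_{j_2}\cdots b_{j_{n-1}}]$ with $\{j_1,\dots,j_{n-1}\}=\{1,\dots,n\}\setminus\{i\}$, which is adjacent to $V_i=\langle b_j:j\neq i\rangle$ and antipodal to every other $V_k$ by Lemma \ref{complement}.

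For the ``if'' direction, the case $n=3$ is Proposition \ref{apartments}: its hypotheses are exactly (2) and (3), and the conclusion that $\Delta$ is standard also forces (1) a posteriori. For $n\ge 4$, suppose $\Delta\subset\OF_n$ satisfies (1)--(3). \emph{Normalize.} Pick a rank-1 vertex $[c_n]$ of $\Delta$ with opposite rank-$(n-1)$ vertex $V_n$. By (1) applied to the face opposite $[c_n]$, after replacing the rank-1 representatives of that face by suitable conjugates we may take $V_n=\langle c_1,\dots,c_{n-1}\rangle$, where $[c_1],\dots,[c_{n-1}]$ are the other rank-1 vertices of the face. By (2) applied to $[c_n]$, further conjugate $c_n$ so that $\{c_1,\dots,c_n\}$ is a basis of $F_n$.

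\emph{Normal form for the remaining barycentres.} For each $i<n$, (1) provides a representative $V_i^*$ of $[V_i]$ with basis $\{c_j^{\gamma_j}:j\neq i\}$. The rank-$(n-2)$ sub-face $\Delta[c_j:j\neq i,n]$ sits inside both the face opposite $[c_n]$ (where its barycentre is $[\langle c_j:j\neq i,n\rangle]$) and the face opposite $[c_i]$ (where it is $[\langle c_j^{\gamma_j}:j\neq i,n\rangle]$). Equating these descriptions and invoking malnormality of the free factor $H_i:=\langle c_j:j\neq i,n\rangle$ (distinct conjugates of a free factor intersect trivially), we may conjugate $V_i^*$ and modify its basis so that $H_i\subset V_i^*$ and $V_i^*=\langle H_i,c_n^{\xi_i}\rangle$ for some $\xi_i\in F_n$.

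\emph{Pin down $\xi_i$ via condition (3).} By malnormality of $\langle H_i,c_n\rangle$, we have $[V_i^*]=[\langle c_j:j\neq i\rangle]$ if and only if $\xi_i\in\langle H_i,c_n\rangle=\langle c_j:j\neq i\rangle$, for then $V_i^*=\langle H_i,c_n\rangle^{\xi_i}$ is a conjugate of $\langle c_j:j\neq i\rangle$. Apply (3) to $V_i$ to obtain a rank-1 vertex $[d_i]$ adjacent to $V_i^*$ and antipodal to every $V_k$ with $k\neq i$. From $[d_i]\perp[V_n]$ and Lemma \ref{complement}, the cyclically reduced form of $d_i$ in the basis $(c_1,\dots,c_n)$ contains exactly one occurrence of $c_n^{\pm 1}$; writing $d_i$ in the basis $\{c_j:j\neq i,n\}\cup\{c_n^{\xi_i}\}$ of $V_i^*$ and substituting $c_n^{\xi_i}=\xi_i^{-1}c_n\xi_i$, a count of $c_n$-occurrences forces $\xi_i$ to have no $c_n$, so $\xi_i\in V_n$. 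Now apply (3) to $V_n$ to obtain $[d_n]$ adjacent to $V_n$ (so $d_n$ is a word in $c_1,\dots,c_{n-1}$) and antipodal to $V_i^*$; an analogous count, tracking $c_i$-occurrences this time via the Antipode Lemma (Theorem \ref{O:antipode}) and the normal form of $V_i^*$, rules out $c_i$ from $\xi_i$. Hence $\xi_i\in H_i$ and $[V_i]=[\langle c_j:j\neq i\rangle]$. Carrying this out for every $i<n$ identifies $\Delta$ with the image of the standard apartment $\Delta(c_1,\dots,c_n)\subset\AF_n$ in $\OF_n$.

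The main obstacle is the second invocation of condition (3): extracting the absence of $c_i$ in $\xi_i$ from the antipodality of $[d_n]$ to $V_i^*$. This delicate combinatorial step, using the normal form $V_i^*=\langle H_i,c_n^{\xi_i}\rangle$ together with Stallings-graph or Whitehead-style analysis via the Antipode Lemma, is precisely what distinguishes standard apartments from the fake apartments constructed in Section \ref{s:fakes}.
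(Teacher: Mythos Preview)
Your ``only if'' direction, the reduction of $n=3$ to Proposition \ref{apartments}, and the normalization of one codimension-1 face to $\Delta[c_1,\dots,c_{n-1}]$ with the opposite vertex $[c_n]$ all match the paper. Your first use of condition (3), showing that $\xi_i$ contains no $c_n$, is also essentially the paper's argument.

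The gap is your second invocation of (3), which you yourself flag as the ``main obstacle''. The claim that antipodality of some $[d_n]\in\Lk[V_n]$ to $[V_i^*]$ forces $\xi_i$ to avoid $c_i$ is false as stated: for instance, with $V_1^*=\langle c_2,\dots,c_{n-1},\,c_1^{-1}c_nc_1\rangle$ one has $\langle V_1^*,\,c_1c_2\cdots c_{n-1}\rangle=F_n$, so $[c_1c_2\cdots c_{n-1}]$ is antipodal to $[V_1^*]$ even though $\xi_1=c_1$. The point is that such a $V_1^*$ cannot occur in an apartment satisfying (1), but this is exactly what your argument has not used.

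The paper avoids this step entirely by exploiting a structural constraint you skipped: the rank-2 vertex $\sigma(\{j,n\})$ lies in \emph{every} face $Y_i$ with $i\neq j,n$, so comparing core graphs of $[c_j,c_n^{\xi_i}]$ and $[c_j,c_n^{\xi_k}]$ (after normalizing $\xi_i$ to begin with $c_n^{\pm 1}$ and not end with $c_n^{\pm 1}$) forces $\xi_i=\xi_k$ for all $i,k<n$. With a single common conjugator $\gamma$, one only needs $\gamma\in\langle c_1,\dots,c_{n-1}\rangle$ (which your first use of (3) already gives), and then the automorphism fixing $c_1,\dots,c_{n-1}$ and sending $c_n\mapsto\gamma c_n\gamma^{-1}$ carries $\Delta$ to $\Delta(c_1,\dots,c_n)$. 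There is no need to eliminate $c_i$ from $\gamma$; that is absorbed by the automorphism. This also pins down the intermediate-rank vertices of $\Delta$ simultaneously, whereas your approach, even if it succeeded in proving $[V_i]=[\langle c_j:j\neq i\rangle]$ for each $i$, would still owe an argument for those.
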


\begin{proof} First note that standard apartments satisfy these conditions: for (3), a suitable
rank $1$ factor adjacent to $[a_1,\dots,a_{n-1}]\in \Delta(a_1,\dots,a_n)$  is $[a_1\dots a_{n-1}]$.

For the converse,
Proposition \ref{apartments} covers the case $n=3$,  
so we suppose $n>3$. Condition (1) lets us assume that there is a basis $\{a_1,\dots,a_n\}$ of $F$
such that one of the codimension-1 faces of $\Delta$ is the standard $\Delta[a_1,\dots,a_{n-1}]$.
Condition (2) says that the rank 1 vertex opposite this face is $[x]$ where $x$ is antipodal to $\<a_1,\dots,a_{n-1}\>$.
The action of ${\rm{Aut}}(F_n)$ (through
${\rm{Out}}(F_n)$) preserves conditions (1), (2) and (3), so we are free to
apply an automorphism to ensure that $x=a_n$. 

Consider the codimension 1 face $Y_1$ of $\Delta$ opposite $[a_1]$.  By condition (1),
this is standard, so the barycentre of the face is $[V_1]$ where $V_1$ is generated by $\<a_2,\dots,a_{n-1}\>$ and 
a conjugate of $a_n$, say $a_n^{\gamma_1}$.
We can assume that $\gamma_1\in \<a_1,\dots,a_n\>$ is a word that does
not end in $a_n^{\pm 1}$ and (if nontrivial) starts with $a_n^{\pm 1}$
-- it is the label on the
bridge of $\core(V_1)$ connecting the rose with petals $a_2,\dots,a_{n-1}$ to the loop labeled $a_n$.
For $j=2,\dots,n-1$, the barycentre of the edge of $Y_1$
joining $[a_j]$ to $[a_n]$ is $[a_j, a_n^{\gamma_1}]$. Because of our
assumptions on $\gamma_1$, the core graph of $[a_j, a_n^{\gamma_1}]$
consists of loops labeled $a_j$ and $a_n$ with the bridge connecting
them with the label precisely $\gamma_1$.

Similar considerations apply to face $Y_i$ opposite $[a_i]$ for $i=2,\dots,n-1$ and
we define $V_i$ and $\gamma_i$ accordingly. For example, $V_2=\<a_1,a_3,\dots,a_{n-1}, a_n^{\gamma_2}\>$
and for $j=1,3,\dots,n-1$, the barycentre edge of $Y_2$ joining
$[a_j]$ to $[a_n]$ is $[a_j, a_n^{\gamma_2}]$. 

The edge  joining $[a_3]$ to $[a_n]$ in $Y_1$ is, of course, the same as the edge joining them in $Y_2$, so 
$[a_3, a_n^{\gamma_1}] = [a_3, a_n^{\gamma_2}]$. Comparing core graphs, we conclude that 
$\gamma_1=\gamma_2$ since
both are the label on the bridge. 
Proceeding in this manner, we conclude that $\gamma_i=\gamma_j$ for all $i,j\in\{1,\dots,n-1\}$.
If this common conjugator  $\gamma$ lies $ \<a_1,\dots,a_{n-1}\>$, then
 the automorphism that fixes $a_i$ for $i<n$ and conjugates $a_n$ by  $\gamma^{-1}$
will map $\Delta$ to the standard apartment $\Delta(a_1,\dots,a_n)$, so $\Delta$ is standard. 

To complete the proof, we argue that if $\gamma\not\in\<a_1,\dots,a_{n-1}\>$ then $\Delta$ would not satisfy condition (3). 
If a rank 1 vertex $[u]$ is adjacent to $[V_1]$, there is a reduced loop in $\core(V_1)$ labeled $u$.
The key point to note is that each reduced  loop in $\core(V_1)$ either lies in the rose with labels $a_2,\dots,a_{n-1}$, or 
runs only around the loop labeled $a_n$, or else traverses the bridge labeled $\gamma$ twice. In the first
case $[u]$ is not antipodal to $[a_1,\dots,a_{n-1}]\in\Delta$, in the second case it is not 
antipodal to $[V_2]$, and in the last case every conjugate of $u$
contains at least 3 occurrences of the letter $a_n$, so  $[u]$
is not antipodal to $[a_1,\dots,a_{n-1}]$, by Lemma \ref{complement}. 
\end{proof}

\begin{prop}\label{O:standards} For $n\ge 3$, every isometry of $\OF_n$ takes standard apartments to 
standard apartments. 
\end{prop}

\begin{proof} Same as Corollary \ref{n=3 good}.
\end{proof}

\begin{corollary}\label{o:sticks}
For $n\ge 3$, every isometry $\Psi$  of $\OF_n$ takes the sticks of a standard apartment $\Delta$
to the sticks of $\Psi(\Delta)$.
\end{corollary}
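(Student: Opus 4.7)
The plan is to give an intrinsic characterization of sticks in $\OF_n$ in terms of adjacency and the antipode relation, then invoke Proposition \ref{O:standards}. This mirrors the $\AF_n$ strategy (Lemma \ref{stick characterization} $\Rightarrow$ Corollary \ref{c:sticks}), and Remark \ref{r:sticks} explicitly flags that such a characterization becomes available precisely once we know that isometries of $\OF_n$ preserve standard apartments.

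The characterization I propose is the $\OF_n$-analogue of Lemma \ref{stick characterization}: a rank-$1$ vertex $[u]\in\OF_n$ is a stick of $\Delta=\Delta(a_1,\dots,a_n)$ at the rank-$2$ face $V=[a_i,a_j]$ if and only if $[u]$ is adjacent to $V$ and $[u]$ is antipodal in $\OF_n$ to both of the rank-$(n{-}1)$ vertices of $\Delta$ that lie opposite $[a_i]$ and $[a_j]$. The forward direction is a direct computation: for $[u]=[a_i^\epsilon a_j^\delta]$, the wedge $\core_*\<a_1,\dots,\hat a_i,\dots,a_n\>\vee\core_*\<a_i^\epsilon a_j^\delta\>$ folds to $R_n$, giving algebraic antipodality and hence antipodality in $\OF_n$. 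For the converse, adjacency of $[u]$ to $V$ means the cyclically reduced word representing $[u]$ uses only the letters $a_i^{\pm 1},a_j^{\pm 1}$. The conjugacy-class form of Lemma \ref{complement} — namely $[u]\perp[\<a_1,\dots,\hat a_k,\dots,a_n\>]$ in $\OF_n$ iff some cyclic rotation of the reduced word of $u$ has the shape $x a_k^{\pm 1} y$ with $x,y\in\<a_1,\dots,\hat a_k,\dots,a_n\>$ — then forces this cyclic word to contain exactly one $a_i^{\pm 1}$ and exactly one $a_j^{\pm 1}$. Hence $[u]$ equals $[a_i^\epsilon a_j^\delta]$ cyclically, so $[u]$ is a stick at $V$.

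With the characterization in hand, the corollary is immediate. An isometry $\Psi$ preserves rank (Section \ref{s:ostep1}), adjacency, and the antipode relation (Theorem \ref{O:antipode}), and it sends the standard apartment $\Delta$ to the standard apartment $\Psi(\Delta)$ by Proposition \ref{O:standards}; as a simplicial isomorphism between apartments, it sends $V$ to a rank-$2$ face of $\Psi(\Delta)$ and the two rank-$(n{-}1)$ vertices of $\Delta$ opposite $[a_i]$ and $[a_j]$ to the corresponding opposite vertices of $\Psi(\Delta)$. All three ingredients of the characterization are thereby preserved, so $\Psi$ sends each stick of $\Delta$ at $V$ to a stick of $\Psi(\Delta)$ at $\Psi(V)$.

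The key subtlety, and the reason a naive local characterization fails, is the warning in Remark \ref{r:sticks}: the sticks of $\Delta$ at $V$ genuinely depend on the whole apartment $\Delta$ and not just on $V$ together with its two rank-$1$ neighbours $[a_i],[a_j]$. Consequently the intrinsic characterization must refer to specific opposite rank-$(n{-}1)$ vertices sitting inside $\Delta$, and the reason $\Psi$ transports these correctly is that $\Psi$ is not merely an isometry of $\OF_n$ but a simplicial isomorphism from $\Delta$ onto $\Psi(\Delta)$, so the combinatorial notion of ``opposite face'' is transported along with the apartment.
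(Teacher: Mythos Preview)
Your proof is correct and follows essentially the same template as the paper: give an intrinsic characterisation of sticks in terms of adjacency and antipodality, then note that isometries preserve all the ingredients (rank, adjacency, antipodality via Theorem~\ref{O:antipode}, and standard apartments via Proposition~\ref{O:standards}).

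The one genuine difference is the specific characterisation chosen. You characterise a stick at $\Delta[a_i,a_j]$ by requiring antipodality in $\OF_n$ to the two rank-$(n{-}1)$ barycentres of $\Delta$ opposite $[a_i]$ and $[a_j]$; this is the direct translation of Lemma~\ref{stick characterization} from $\AF_n$. The paper instead works locally in rank-$3$ faces: a stick at $\Delta[a,b]$ is characterised as a rank-$1$ vertex adjacent to $[a,b]$ that, for every rank-$3$ face $\Delta[a,b,c]$, is antipodal to $[a,c]$ and $[b,c]$ inside $\Lk_-([a,b,c])\cong\OF_3$. Both characterisations reduce to the same counting argument via Lemma~\ref{complement} (exactly one occurrence of each of $a_i^{\pm 1}$ and $a_j^{\pm 1}$ in the cyclically reduced word), so the content is the same. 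Your global version is slightly more economical; the paper's local version has the virtue of only invoking the Antipode Lemma in $\OF_3$ (via restriction to links) rather than in $\OF_n$, and it fits the inductive pattern used elsewhere in the paper.
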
 

\begin{proof} It follows from Lemma \ref{complement} that
the sticks of $\Delta$ at 
a rank $2$ face $\Delta[a,b]$ are the unique rank 1 vertices $V$ adjacent to $[a,b]$
with the property that  for every rank $3$ face $\Delta[a,b,c]$, the vertex
$V$ is antipodal to $[a,c]$ and $[b,c]$ in $\Lk_-([a,b,c])\cong\OF_3$.
And $\Psi$ transports this condition to the sticks of the
standard apartment $\Psi(\Delta)$.
\end{proof}

Similarly,  following Lemma
\ref{superstick characterization} we have:

\begin{corollary}\label{o:ssticks}
For $n\ge 3$, every  isometry $\Psi$  of $\OF_n$ takes the supersticks of a standard apartment $\Delta$
to the supersticks of $\Psi(\Delta)$. 
\end{corollary}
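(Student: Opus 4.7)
The plan is to mirror the proof of Corollary \ref{o:sticks} by first establishing a metric characterisation of supersticks in $\OF_n$ analogous to Lemma \ref{superstick characterization}, and then to observe that every ingredient of this characterisation is preserved by isometries of $\OF_n$.

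First I would settle the case $n=3$ by proving the following: a rank 1 vertex $[V]\in \OF_3$ is a superstick of a standard apartment $\Delta(a,b,c)$ if and only if $[V]$ is algebraically antipodal to each of the three rank-2 vertices $[a,b]$, $[b,c]$, $[a,c]$. The forward implication is a direct algebraic check: for $u=a^{\epsilon}b^{\delta}c^{\eta}$ one verifies that $\<a,b\>\ast\<u\>=F_3$ (since $c^{\pm 1}$ lies in the right-hand side) and similarly for the other pairs. For the converse, write $[V]=[\<u\>]$ with $u$ cyclically reduced. Applying Lemma \ref{complement} once for each of the three antipode conditions (after the appropriate change of basis in each case), one concludes that the cyclic word $u$ contains exactly one occurrence of each of $a^{\pm 1},b^{\pm 1},c^{\pm 1}$. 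Hence $u$ is a cyclic rearrangement of a word $a^{\epsilon}b^{\delta}c^{\eta}$, and $[V]$ is one of the 8 supersticks.

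For $n>3$, the analogous characterisation is: $[V]$ is a superstick of the rank-3 face $\Delta[a,b,c]$ if and only if $d([V],[\<a,b,c\>])=1$ and, under the isomorphism $\Lk_-([\<a,b,c\>])\cong \OF_3$ supplied by Lemma \ref{l:links}, the class $[V]$ is antipodal to each of the three rank-2 vertices of the standard apartment in $\OF_3$ which is the image of $\Delta[a,b,c]$. The condition $d([V],[\<a,b,c\>])=1$ forces a conjugate of $V$ to lie in $\<a,b,c\>$, whereupon the $n=3$ case applies verbatim inside the link.

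Given this characterisation, the corollary follows as Corollary \ref{o:sticks} did: by Proposition \ref{O:standards} the image $\Psi(\Delta)$ is again a standard apartment, and its rank-3 faces are the images of rank-3 faces of $\Delta$; isometries preserve rank (Step 1), adjacency (by definition), and the antipode relation (Antipode Lemma \ref{O:antipode}), and they restrict to isometries of links. Composing these observations, $\Psi$ sends each superstick of $\Delta$ to a vertex satisfying the metric characterisation of a superstick of $\Psi(\Delta)$, as required. The only step of substance, rather than an obstacle, is the $n=3$ check that the antipode-based characterisation picks out exactly the 8 supersticks, which rests on Lemma \ref{complement}.
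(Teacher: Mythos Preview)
Your proposal is correct and follows essentially the same approach as the paper, which simply says ``Similarly, following Lemma \ref{superstick characterization}'' and leaves the details to the reader; you have correctly unpacked those details by adapting the antipode-based characterisation of supersticks to $\OF_n$ (via Lemma \ref{complement} for $n=3$ and via $\Lk_-\cong\OF_3$ for $n>3$) and then invoking Proposition \ref{O:standards} and the Antipode Lemma, just as in the proof of Corollary \ref{o:sticks}.
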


\subsection{The endgame}

The sum of our previous results tells us that for $n\ge 3$, every isometry of $\OF_n$ maps
standard apartments to standard apartments, respecting their sets of sticks, supersticks and bonded triples
(triples of sticks contained a common factor of rank $2$).
We shall deduce Theorem \ref{thm2} by following the final steps in the proof of Theorem \ref{thm1}; only
minor adjustments are needed, except for the issue resolved in Lemma \ref{o:l2}.

It will be convenient to consider the action of ${\rm{Aut}}(F_n)$ on $\OF_n$ (with the inner automorphisms
acting trivially), as the subgroups ${\rm{Aut}}(F_{n-1})\hookrightarrow {\rm{Aut}}(F_n)$ fixing basis elements
appear in the proof.
The pointwise stabilizer in ${\rm{Out}}(F_n)$ of the standard apartment 
$\Delta(a_1,\dots,a_n)$ is $(\Z/2)^n = \<\e_1,\dots,\e_n\>$ where $\e_i$ is the
involution that sends $a_i$ to $a_i^{-1}$ and fixes $a_{j\neq i}$. The diagonal element
$\iota = \e_1\dots\e_n$ will play a special role, related to  the following observation.

\begin{lemma}\label{l:iota-sticks}
$\iota$ acts trivially on the set of sticks associated to the standard apartment $\Delta(a_1,\dots,a_n)$ in $\OF_n$,
but it acts without fixed points on the set of supersticks.
\end{lemma}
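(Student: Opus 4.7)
The plan is direct: both statements reduce to simple calculations about which length-$2$ and length-$3$ cyclically reduced words in $\{a_1^{\pm 1},\dots,a_n^{\pm 1}\}$ define the same rank-$1$ conjugacy class, together with the observation that $\iota$ acts by inverting every basis letter, so it preserves the sequence of letters appearing in a word while flipping all their exponents. Throughout I will use the basic fact that for a cyclically reduced word $w$ the conjugacy class $[\<w\>]\in\OF_n$ corresponds to the cyclic word $w$ up to inversion, i.e.\ $[\<w\>]$ is represented by precisely the cyclic rotations of $w$ and of $w^{-1}$.

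For the first assertion, a stick at $\Delta[a_i,a_j]$ has the form $[a_i^{\varepsilon} a_j^{\delta}]$ with $\varepsilon,\delta\in\{\pm 1\}$. Applying $\iota$ produces $[a_i^{-\varepsilon} a_j^{-\delta}]$. I would observe that
$a_i^{-\varepsilon} a_j^{-\delta} = (a_j^{\delta} a_i^{\varepsilon})^{-1}$
and that $a_j^{\delta}a_i^{\varepsilon}$ is a cyclic rotation of $a_i^{\varepsilon}a_j^{\delta}$; combining this with $[\<u\>]=[\<u^{-1}\>]$ immediately gives $\iota([a_i^{\varepsilon}a_j^{\delta}])=[a_i^{\varepsilon}a_j^{\delta}]$. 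So every stick is fixed.

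For the second assertion, a superstick is a class $[w]$ with $w=a_i^{\delta_i}a_j^{\delta_j}a_k^{\delta_k}$, where $\{i,j,k\}\subset\{1,\dots,n\}$ is a triple of distinct indices. The class $[w]$ is represented by exactly six words: the three cyclic rotations of $w$, all of which display the letters $a_i,a_j,a_k$ in the \emph{cyclic order} $(i,j,k)$ with the same signs $(\delta_i,\delta_j,\delta_k)$, together with the three cyclic rotations of $w^{-1}$, which display the letters in the \emph{opposite cyclic order} $(k,j,i)$ with all signs negated. Now $\iota(w)=a_i^{-\delta_i}a_j^{-\delta_j}a_k^{-\delta_k}$ displays the letters in the same cyclic order $(i,j,k)$ as $w$ but with all signs negated. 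It can therefore equal a rotation of $w$ only if every $\delta_\ell=0$ (impossible), and it can equal a rotation of $w^{-1}$ only if the cyclic orders $(i,j,k)$ and $(k,j,i)$ coincide, which they do not for three distinct letters. Hence $[\iota(w)]\neq[w]$, so $\iota$ has no fixed supersticks.

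I do not anticipate any real obstacle; the only place to be careful is the bookkeeping that identifies which two cyclically reduced words represent the same rank-$1$ conjugacy class, and in particular the role of the involution $[u]=[u^{-1}]$, which explains why $\iota$ is forced to fix sticks (length $2$, where inversion and a cyclic rotation conspire to cancel the sign flip) but not supersticks (length $3$, where cyclic rotation alone preserves signs and inversion reverses the cyclic order of the three letters).
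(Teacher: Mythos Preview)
Your argument is correct. The paper states this lemma without proof, treating it as an immediate observation, and your direct computation with cyclic words is exactly the natural way to verify it: for sticks the sign flip from $\iota$ is undone by combining inversion with a single cyclic rotation, while for supersticks the two available moves (cyclic rotation, which preserves both the cyclic order of the three letters and their signs; and inversion, which reverses the cyclic order and negates all signs) can never produce the effect of $\iota$ (same cyclic order, all signs negated).
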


\begin{prop}\label{o:fix-sticks} If $\Phi\in{\rm{Isom}}(\OF_n)$ fixes $\Delta=\Delta(a_1,\dots,a_n)$,
then there exists $\theta\in \<\e_1,\dots,\e_n\>$ such that $\theta\circ\Phi$ fixes $\Delta$ and all of its sticks.
\end{prop}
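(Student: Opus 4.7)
The plan is to follow the structure of the proof of Proposition~\ref{p:fix-sticks}, making appropriate adjustments for the fact that in $\OF_n$ each rank-2 face of a standard apartment carries only two sticks (rather than four in $\AF_n$) and $\<\e_1,\dots,\e_n\>$ acts on sticks through its quotient by $\<\iota\>$ (Lemma~\ref{l:iota-sticks}). Throughout, $\Phi$ permutes sticks, supersticks and bonded triples of $\Delta$ by Corollaries~\ref{o:sticks},~\ref{o:ssticks} and the remark about bonded triples being stick-triples lying in a common rank-2 factor.

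The key preliminary step is an $\OF_n$-version of Lemma~\ref{l1}: if $\Phi$ fixes a standard apartment $\Delta(a,b,c)$ in $\OF_3$ (or a rank-3 face $\Delta[a,b,c]$ of a standard apartment, interpreted in $\Lk_-$) and $\Phi$ fixes one of the two sticks at $\Delta[a,b]$, then either $\Phi$ fixes all six sticks of $\Delta$ or $\e_c\circ\Phi$ does. To verify this, note first that $\e_c$ fixes both sticks at $\Delta[a,b]$ while swapping the two sticks at each of $\Delta[a,c]$ and $\Delta[b,c]$ (using $[ac^{-1}] = [a^{-1}c]$, etc.). The sticks $[bc], [b^{-1}c]$ are either fixed or swapped by $\Phi$, so after precomposing with $\e_c$ if necessary we may assume both are fixed. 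The four bonded triples of $\Delta$ listed in Example~\ref{ex:bonds} then force the remaining sticks to be fixed: $(ab, bc, a^{-1}c)$ and $(ab, b^{-1}c, ac)$ pin down $[a^{-1}c]$ and $[ac]$, after which $(a^{-1}b, bc, ac)$ pins down $[a^{-1}b]$.

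The base case $n=3$ of the proposition is now immediate: composing $\Phi$ with $\e_1$ (or $\e_2$) arranges that $\Phi$ fixes the stick $[a_1 a_2]$, and the preliminary step finishes. For the inductive step with $n\ge 4$, the argument mirrors the $\AF_n$ proof. Using the $\Aut(F_{n-1})$-equivariant isomorphism $\Lk_-[\<a_1,\dots,a_{n-1}\>] \cong \OF_{n-1}$ from Lemma~\ref{l:links}, the inductive hypothesis produces $\theta_1 \in \<\e_1,\dots,\e_{n-1}\>$ so that $\theta_1\Phi$ fixes $\Delta[a_1,\dots,a_{n-1}]$ together with all of its sticks. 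Applying the preliminary step inside the rank-3 face $\Delta[a_1, a_2, a_n]$ and composing with $\e_n$ if necessary, we arrange that $\theta_1\Phi$ also fixes the sticks at $\Delta[a_1, a_n]$ and $\Delta[a_2, a_n]$. Finally, for each $i > 2$, within the rank-3 face $\Delta[a_1, a_i, a_n]$ the sticks at $\Delta[a_1, a_i]$ and at $\Delta[a_1, a_n]$ are already fixed, so the bonded-triple relations of that face force the sticks at $\Delta[a_i, a_n]$ to be fixed as well.

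The main point requiring care is the preliminary step: one must check that the bonded-triple constraints of Example~\ref{ex:bonds} really do propagate the fixing of one stick at $\Delta[a,b]$ and both sticks at $\Delta[b,c]$ to all of $\Delta$. This is a finite combinatorial check, analogous to (but replacing) the cube-isometry argument used in the $\AF_n$ case, and it is the essential place where the smaller number of sticks in $\OF_n$ is compensated for by the rigidity of the bonded-triple structure.
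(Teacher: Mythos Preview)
Your proof is correct and follows essentially the same approach as the paper: the paper states that the inductive proof of Proposition~\ref{p:fix-sticks} applies verbatim once one has the $\OF_3$ analogue of Lemma~\ref{l1} (which is your preliminary step, recorded in the paper as Lemma~\ref{o:l1}), and your bonded-triple verification is exactly the argument the paper gives for that lemma. The only minor difference is that in the final step you propagate fixing to the sticks at $\Delta[a_i,a_n]$ directly via bonded triples, whereas the paper invokes the ``exactly one'' clause of the lemma; both work.
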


The inductive proof of Proposition \ref{p:fix-sticks} applies verbatim to this proposition (replacing 
$\AF_{n-1}$ with $\OF_{n-1}$) once we have
the following analogue of Lemma \ref{l1} in hand.
 
\begin{lemma}\label{o:l1}
If an isometry $\Phi$ of $\OF_3$ fixes the standard apartment $\Delta=\Delta(a,b,c)$ and the sticks at $\Delta[a,b]$,
then one exactly one of $\{\Phi , \e_c\circ\Phi\}$ fixes $\Delta$ and all of its sticks.
\end{lemma}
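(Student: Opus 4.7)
The plan is to analyse the action of $\Phi$ on the four sticks of $\Delta$ at the faces $\Delta[b,c]$ and $\Delta[a,c]$, using that $\Phi$ must permute bonded triples. First I would note why bonded triples are preserved: by definition, a bonded triple is a collection of three sticks of a standard apartment that lie in a common rank-$2$ free factor, i.e., that share a common rank-$2$ upper neighbour in $\OF_3$. Since $\Phi$ is rank-preserving, sends standard apartments to standard apartments (Proposition \ref{O:standards}), sends sticks to sticks (Corollary \ref{o:sticks}), and preserves incidence, it permutes the bonded triples of $\Delta$. From Example \ref{ex:bonds} the bonded triples of $\Delta(a,b,c)$ containing $[ab]$ are $T_1=\{[ab],[b^{-1}c],[ac]\}$ and $T_2=\{[ab],[bc],[a^{-1}c]\}$, and since $[ab]$ is fixed, $\Phi$ either fixes both $T_1,T_2$ setwise or swaps them.

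In the first case I would argue that $\Phi$ fixes every stick of $\Delta$. Indeed, $\Phi$ preserves the pair of sticks $\{[bc],[b^{-1}c]\}$ at the fixed face $[b,c]$; the intersection of this pair with $T_1$ is the singleton $\{[b^{-1}c]\}$, so $\Phi([b^{-1}c])=[b^{-1}c]$, and consequently $\Phi([bc])=[bc]$. Combined with $T_1$ being setwise fixed this forces $\Phi([ac])=[ac]$, and the same reasoning on $T_2$ yields $\Phi([a^{-1}c])=[a^{-1}c]$. In the second case, $\Phi$ sends $T_1$ to $T_2$; a parallel argument, using that $\Phi$ preserves the set of sticks at $[b,c]$, shows that $\Phi$ is forced to swap $[bc]\leftrightarrow[b^{-1}c]$ and $[ac]\leftrightarrow[a^{-1}c]$, while still fixing the sticks at $[a,b]$.

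To finish, I would verify that $\e_c$ realises precisely the swap in the second case while fixing $\Delta$ pointwise: $\e_c$ fixes each of $[a],[b],[c],[a,b],[a,c],[b,c]$ (since $\e_c$ preserves each relevant factor setwise), fixes both sticks $[ab],[a^{-1}b]$ at $[a,b]$, and maps $[bc]$ to $[bc^{-1}]=[b^{-1}c]$ (conjugating by $b$) and $[ac]$ to $[a^{-1}c]$. Hence $\e_c\circ\Phi$ falls into the first case exactly when $\Phi$ falls into the second, and vice versa. Since the two cases give genuinely different actions on sticks, exactly one of $\{\Phi,\e_c\circ\Phi\}$ fixes $\Delta$ together with all of its sticks.

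The main (mild) obstacle is the preservation of bonded triples by $\Phi$, but as indicated above this reduces to an intrinsic description (a common rank-$2$ upper neighbour) that is manifestly invariant under any isometry of $\OF_3$ that already respects standard apartments and their sticks.
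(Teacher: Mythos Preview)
Your argument is correct and follows essentially the same route as the paper's proof: both reduce to the fact that $\Phi$ permutes the two sticks at each rank-$2$ face and permutes bonded triples, then use that any two sticks in a bonded triple determine the third (Remark~\ref{r:sticks}(3)) together with the observation that $\e_c$ swaps the sticks at $[b,c]$ and $[a,c]$ while fixing those at $[a,b]$. The only cosmetic difference is ordering: the paper first composes with $\e_c$ (if needed) to fix the sticks at $[b,c]$ and then invokes bonded triples to pin down the sticks at $[a,c]$, whereas you first split into two cases via the bonded triples through $[ab]$ and then identify $\e_c$ as the map interchanging the cases.
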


\begin{proof}  If $\Phi$ exchanges the two sticks at $[b,c]$ then we compose with  
 $\e_c$ so  that it fixes them. It must then fix the sticks at $[a,c]$,  because they are contained in
 bonded triples where the other two sticks are fixed, and each pair of sticks in a triple uniquely determines
 the third stick (see Example \ref{ex:bonds}).
\end{proof}
 
At this stage in the proof of Theorem \ref{thm1} we argued (Lemma \ref{l2})
that if an isometry $\Phi$ of $\FF_n$ fixes a standard apartment $\Delta$ and its sticks,
then it  also fixes all of the supersticks of that apartment.  This is not true in the case of $\OF_n$; it
has to be adjusted as follows. 

Note that since $\iota$ acts freely on the supersticks of 
$\Delta(a_1,\dots,a_n)$, the word ``one" in the following statement means ``exactly one".

\begin{lemma}\label{o:l2} For $n\ge 3$, 
if an isometry $\Phi$ of $\OF_n$ fixes the standard apartment $\Delta(a_1,\dots,a_n)$
and its sticks, then one of $\Phi$ and $\iota\circ\Phi$ fixes the apartment, its sticks and its supersticks. 
\end{lemma}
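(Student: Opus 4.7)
The plan is to reduce to the case $n=3$, where the eight supersticks of $\Delta=\Delta(a,b,c)$ carry an intrinsic $3$-cube structure that forces $\Phi$ to act either trivially or as $\iota$, and then to propagate this dichotomy by induction.

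For $n=3$, I would first identify a natural collection of rank-$2$ factors near $\Delta$.  Call a rank-$2$ free factor $[M]\in\OF_3$ a \emph{small factor at $\Delta$} if it is adjacent to a rank-$1$ vertex $[v]$ of $\Delta$ and to a stick $[w]$ in the rank-$2$ face of $\Delta$ opposite $[v]$, and if it contains a superstick of $\Delta$.  A direct enumeration via core graphs shows these are precisely the twelve factors $[v,\sigma]$ with $v\in\{a,b,c\}$ and $\sigma$ a cyclically reduced length-$2$ word in the remaining two letters; each has core graph a wedge of a loop labeled by $v$ and a bigon labeled by $\sigma$.  Since $\Phi$ fixes the vertices and sticks of $\Delta$, permutes its supersticks, and respects all adjacencies, $\Phi$ permutes the twelve small factors.

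Next I would encode the incidences as a graph $G$ on the eight supersticks, placing an edge for each small factor (joining the two supersticks it contains).  Parametrize supersticks by $(\e_a,\e_b,\e_c)\in\{\pm 1\}^3$ via the bijection $(\e_a,\e_b,\e_c)\mapsto[a^{\e_a}b^{\e_b}c^{\e_c}]$ (with the letters always ordered $a,b,c$), under which $\iota$ acts by negating all three signs.  A case-by-case check shows that the two supersticks in the factor $[v,\sigma]$ differ precisely in the $\e_v$-coordinate, so $G\cong Q_3$ with its three parallel classes of edges labeled by $a,b,c$.  Because $\Phi$ fixes each of $[a],[b],[c]$, its action on $Q_3$ preserves each parallel class and therefore lies in the subgroup $(\Z/2)^3$ of coordinate-wise sign flips $(\delta_a,\delta_b,\delta_c)$.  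The stick $[bc]$ is contained in exactly the two small factors $[a,bc]$ and $[a,cb]$, which correspond in $Q_3$ to the two $\e_a$-edges with $(\e_b,\e_c)\in\{(+,+),(-,-)\}$; since $\Phi$ fixes $[bc]$, it must preserve this pair, forcing $\delta_b=\delta_c$.  The symmetric constraints from $[ac]$ and $[ab]$ give $\delta_a=\delta_c$ and $\delta_a=\delta_b$, so $\delta_a=\delta_b=\delta_c$ and $\Phi$ acts on supersticks as either the identity or $\iota$.

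For $n>3$ I would induct on $n$, applying the statement in $\Lk_-([a_1,\ldots,a_{n-1}])\cong\OF_{n-1}$ and, after possibly replacing $\Phi$ by $\iota\circ\Phi$, arranging that $\Phi$ fixes every superstick of $\Delta$ that avoids $a_n$.  For each remaining rank-$3$ face $\Delta[a_i,a_j,a_n]$, I would propagate via the bridging rank-$3$ free factor $[a_ia_j,a_k,a_n]$ for some $k\in\{1,\ldots,n-1\}\setminus\{i,j\}$: it is adjacent to the fixed vertices $[a_k],[a_n]$ and to the fixed stick $[a_ia_j]$, and its core graph (a bigon labeled $a_ia_j$ together with loops $a_k$ and $a_n$ at the basepoint) contains both the already-fixed superstick $[a_ia_ja_k]$ and the superstick $[a_ia_ja_n]$.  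Analysing $\Phi$'s action on the pair $\{[a_ia_j,a_k,a_n],[a_ja_i,a_k,a_n]\}$, one checks that the companion $[a_ja_i,a_k,a_n]$ does not contain $[a_ia_ja_k]$, and that $[a_ia_j,a_k,a_n]$ does not contain $\iota[a_ia_ja_n]=[a_ia_na_j]$; this rules out both the swap and the local $\iota$-action on $\Delta[a_i,a_j,a_n]$, forcing the identity choice there.  The main obstacle throughout is the $n=3$ analysis---identifying the twelve small factors and verifying that their incidences yield exactly $Q_3$; once this is in hand, the constraint from the sticks is direct and the bridging argument for $n>3$ follows the same template.
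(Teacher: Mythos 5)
Your $n=3$ argument is correct, and it is essentially the paper's own argument seen globally: your twelve ``small factors'' are exactly the midpoint factors $[M],[M']$ that the paper compares (e.g.\ $[a_1a_2,a_3]$ and $[a_2a_1,a_3]$), and the crucial computation in both cases is the same, namely that a rank-$2$ factor adjacent to a vertex $[v]$ of $\Delta$ and to a stick of the opposite face carries a superstick only when the bridge in its core graph is trivial. Organising these factors as the edge set of a $3$-cube on the eight supersticks, with parallel classes labelled by $a,b,c$ and each stick selecting an antipodal pair of edges in its class, is a clean and correct way to package the paper's case-by-case comparison of shared supersticks, and it yields the dichotomy ``identity or $\iota$'' directly.

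The gap is in the propagation step for $n>3$. You treat $W=[a_ia_j,a_k,a_n]$ as if $\Phi(W)$ were confined to the pair $\{W,[a_ja_i,a_k,a_n]\}$, but the adjacencies you list --- to $[a_k]$, $[a_n]$ and the stick $[a_ia_j]$ --- do not characterise that pair. There are infinitely many rank-$3$ factors with those adjacencies (join the three islands of the core graph by bridges), and even among the bridge-free ones there are four, since the $a_k$- and $a_n$-loops may be attached independently to either vertex of the $a_ia_j$-bigon. The dangerous candidate is $W''=[a_ia_j,\,a_k,\,a_ia_na_i^{-1}]$: it is adjacent to $[a_k]$, $[a_n]$, $[a_ia_j]$ \emph{and} to the already-fixed superstick $[a_ia_ja_k]$, and the supersticks of $\Delta[a_i,a_j,a_n]$ it carries are exactly $[a_ia_na_j]=\iota[a_ia_ja_n]$ and $[a_ia_n^{-1}a_j]=\iota[a_ia_ja_n^{-1}]$. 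Thus the scenario ``$\Phi(W)=W''$ and $\Phi$ acts as the local $\iota$ on the supersticks of $\Delta[a_i,a_j,a_n]$'' is consistent with everything you check; your argument excludes only the swap with $[a_ja_i,a_k,a_n]$. To close the gap you must pin $W$ down before comparing supersticks. This is what the paper does: first the rank-$2$ midpoint $[a_ia_j,a_k]$ is fixed (it is the unique rank-$2$ factor adjacent to $[a_ia_j]$, $[a_k]$ and the fixed superstick $[a_ia_ja_k]$; cf.\ Addendum \ref{addend}, or your own $n=3$ analysis applied in $\Lk_-([a_i,a_j,a_k])$), and then $W$ is the unique rank-$3$ vertex adjacent to $[a_ia_j,a_k]$, $[a_n]$ and the stick $[a_ka_n]$ --- adjacency to the midpoint forces the bigon and the $a_k$-loop to be wedged at the correct vertex, and adjacency to $[a_ka_n]$ forces the $a_n$-loop to share that vertex by Lemma \ref{fig-8}. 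Once $\Phi(W)=W$ is established, your comparison of the supersticks carried by $W$ does complete the induction.
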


\begin{proof} The vertices $[M_i]$ and $[M_i']$ appearing in this proof should be regarded as {\em midpoints}
between the rank 1 vertices of $\Delta=\Delta(a_1,\dots,a_n)$ and the sticks of $\Delta$; these midpoints come in pairs.

Consider first the superstick $[a_1a_2a_3]$. The rank 2 vertices $[M]$ adjacent to both $[a_1a_2]$ and $[a_3]$
in $\OF_n$ are of the form $[a_1a_2, a_3^\gamma]$ or $[a_2a_1, a_3^\gamma]$, where $\gamma$
is the label on the arc in $\core(M)$ connecting the loop labeled $a_1a_2$
or  $a_2a_1$ to the loop labeled $a_3$.  
The key point to observe is that if $\gamma\neq 1$ then $\core(M)$ does not contain a loop labeled by a superstick
$a_i^{\delta_1}a_j^{\delta_2}a_k^{\delta_3}$ with $|\delta_1|=|\delta_2|=|\delta_3|=1$. Thus the only rank 2 vertices
$[M]\in\OF_n$ at distance $1$ from $[a_1a_2]$ and $[a_3]$ and a superstick of $\Delta$ are 
$[M_1]:=[a_1a_2, a_3]$ and $M_1':=[a_2a_1, a_3]$. Likewise, 
the only rank 2 vertices
$[M]\in\OF_n$ at distance $1$ from $[a_2a_3]$ and $[a_1]$ and a superstick of $\Delta$ are 
$[M_2]:=[a_2a_3, a_1]$ and $M_2':=[a_3a_2, a_1]$. 

The two supersticks carried by $M_1$ are $[a_1a_2a_3]$ and $[a_1a_2a_3^{-1}]$, while 
$M_1'$ carries  $[a_1a_3^{\pm 1}a_2]$ and  $M_2$ carries  $[a_1(a_2a_3)^{\pm 1}]$ and  $M_2'$ carries  
$[a_1 (a_3a_2)^{\pm 1}]$. Thus $M_1$ and $M_2$ have a single superstick in common, as do $M_1'$ and $M_2'$,
and no other combination does.  

As $\Phi$ fixes $[a_1a_2]$ and $[a_3]$ and takes supersticks to supersticks, it must fix both of $M_1$ and $M_1'$
or interchange them. Likewise it must fix both of $M_2$ and $M_2'$
or interchange them. And if it interchanges $M_1$ and $M_1'$ then it must also interchange $M_2$ and $M_2'$,
since $M_1$ and $M_2$ have a superstick in common, whereas $M_1$ and $M_2'$ do not.
The action of $\iota$ fixes $\Delta$ and its sticks while interchanging $M_1$ and $M_1'$ and  interchanging
$M_2$ and $M_2'$. So by composing with $\iota$ if necessary, we may assume that $\Phi$ fixes 
each of $M_1, M_1',M_2, M_2'$. It must then also fix the common supersticks that pairs of these factors support,
and the remaining supersticks that they carry must then also be fixed. Thus $\Phi$ (possibly adjusted by $\iota$)
must fix all six of the sticks listed above. The remaining supersticks of $\Delta[a_1,a_2,a_3]$ are
$[a_1a_2^{-1}a_3]$ and $[a_1a_2^{-1}a_3^{-1}]$. These too must be fixed because the latter is supported in common
with $[a_1a_3a_2]$ on a midpoint graph between $[a_1]$ and $[a_3a_2]$, whereas the former is not.

At this point we are done in the case $n=3$, but to 
complete the proof of the lemma in the general case
we must argue that because $\Phi$ fixes the supersticks associated to
one rank-3 face, it fixes the supersticks on all rank 3 faces. The argument given above shows that
$\Phi$ either fixes all or none of the supersticks at a rank 3 face, so it will be enough to prove that 
$\Phi$ fixes one of the supersticks at an adjacent face; we focus on $[a_1a_2a_4]$.

Observe that $V=[a_1a_2, a_3,a_4]$ is the unique rank-3 vertex adjacent to $M_1,\, [a_3],\, [a_4],\, [a_3a_4]$,
all of which we know to be fixed by $\Phi$, and $\core(V)$ is the wedge of loops labeled $a_3, a_4, a_1a_2$.
The only superstick of $\Delta[a_1, a_2, a_4]$ carried by this graph is $[a_1a_2a_4]$; in other words
this is the only such superstick that is a distance $1$ from $V$. Thus the isometry $\Phi$ must fix $[a_1a_2a_4]$. 
\end{proof}

The proof of the following observation is contained in the preceding proof.

\begin{addendum}\label{addend} If $\Phi$ fixes $\Delta(a_1,\dots,a_n)$, all of its sticks, and all of its substicks, 
then, for all 
distinct triples $i,j,k\in\{1,\dots,n\}$,
it also fixes each of the rank 2 vertices $[M]$ adjacent to both $[a_i]$ and $[a_j,a_k]$ 
\end{addendum}

We need one last lemma.
 
\begin{lemma}\label{one off} 
Let $\D_1=\Delta(b_1,b_2,\cdots,b_n)$ be a standard apartment that contains all
the vertices of $\D_0=\Delta(a_1,a_2,\cdots,a_n)$ except for
$\<a_2,a_3,\cdots,a_n\>$.
\begin{enumerate}[(1)]
\item If $n>3$ then $\D_1=\D_0$.
\item If $n=3$ then $\D_1=\D_0(a_1,a_2^{a_1^k},a_3)$ for some $k\in\Z$.
\end{enumerate}
\end{lemma}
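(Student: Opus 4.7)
The plan is to identify each $b_i$ explicitly in terms of $a_i$ using the shared rank-$2$ (and rank-$(n-2)$) vertices of $\D_0$ and $\D_1$, and then to close by Lemma \ref{lift} when $n\geq 4$ and by a direct computation when $n=3$.

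First I would normalise the basis of $\D_1$: since the rank-$1$ vertex sets of $\D_0$ and $\D_1$ coincide, after permuting the $b_i$ and possibly inverting some of them (operations which leave $\D_1$ unchanged), we may assume $[b_i]=[a_i]$ and $b_i$ is conjugate to $a_i$ (not to $a_i^{-1}$). Conjugating the whole basis by a single element of $F_n$ -- an inner automorphism that alters neither $\D_1$ nor the hypothesis -- we then arrange $b_1=a_1$. For each $i\geq 2$ the vertex $[a_1,b_i]$ of $\D_1$ has rank $2\neq n-1$, so it lies in $\D_0$; being adjacent to both $[a_1]$ and $[a_i]$, it must equal $[a_1,a_i]$. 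Hence $\langle a_1,b_i\rangle$ is conjugate to $\langle a_1,a_i\rangle$ by some $g\in F_n$, and since $ga_1g^{-1}\in \langle a_1,a_i\rangle\cap g\langle a_1,a_i\rangle g^{-1}$ is non-trivial, malnormality of the free factor $\langle a_1,a_i\rangle$ forces $g\in\langle a_1,a_i\rangle$. Consequently $\langle a_1,b_i\rangle=\langle a_1,a_i\rangle$, so $\{a_1,b_i\}$ is a basis of $\langle a_1,a_i\rangle\cong F_2$, and Lemma \ref{complement} (applied inside this $F_2$) writes $b_i=a_1^{k_i}a_i^{\pm 1}a_1^{l_i}$. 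The constraint that $b_i$ be conjugate to $a_i$ rather than $a_i^{-1}$ then pins the form down to $b_i=a_1^{k_i}a_ia_1^{-k_i}$ for some $k_i\in\Z$.

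For $n\geq 4$ I would finish via Lemma \ref{lift}(1). The rank-$(n-2)$ vertices $[b_2,\ldots,b_{n-1}]$ and $[b_3,\ldots,b_n]$ are shared with $\D_0$ (as $n-2\neq n-1$), so they equal $[a_2,\ldots,a_{n-1}]$ and $[a_3,\ldots,a_n]$ respectively. Hence the rank-$(n-1)$ free factor $\langle b_2,\ldots,b_n\rangle$ contains conjugates of both $\langle a_2,\ldots,a_{n-1}\rangle$ and $\langle a_3,\ldots,a_n\rangle$, and Lemma \ref{lift}(1) identifies it as a conjugate of $\langle a_2,\ldots,a_n\rangle$. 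This gives $[b_2,\ldots,b_n]=[a_2,\ldots,a_n]$, so $\D_1=\D_0$.

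For $n=3$ one simply plugs in: with $b_2=a_1^{k_2}a_2a_1^{-k_2}$ and $b_3=a_1^{k_3}a_3a_1^{-k_3}$, conjugating the whole basis by $a_1^{-k_3}$ yields the equivalent basis $(a_1,a_2^{a_1^{k_2-k_3}},a_3)$, so $\D_1=\Delta(a_1,a_2^{a_1^k},a_3)$ with $k=k_2-k_3$ (up to sign, depending on the convention used for $a_2^{a_1^k}$). The one slightly delicate step in the whole argument is the malnormality-based promotion of conjugacy of $\langle a_1,b_i\rangle$ and $\langle a_1,a_i\rangle$ to genuine equality; everything downstream -- the application of Lemma \ref{complement}, and then of Lemma \ref{lift}(1) or a direct calculation -- is straightforward.
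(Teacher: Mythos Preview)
Your proof is correct apart from one small slip: the clause ``the vertex $[a_1,b_i]$ of $\D_1$ has rank $2\neq n-1$, so it lies in $\D_0$'' fails as written when $n=3$, since then $2=n-1$. The fix is immediate: at most one vertex of $\D_1$ lies outside $\D_0$, and $[b_2,b_3]$ is forced to be that vertex (it contains both $[a_2]$ and $[a_3]$, which no rank-$2$ vertex of $\D_0\smallsetminus\{[a_2,a_3]\}$ does), so $[b_1,b_2]$ and $[b_1,b_3]$ do lie in $\D_0$ and your argument goes through.

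For $n\ge 4$ your route coincides with the paper's: both reduce to Lemma~\ref{lift}(1) by observing that the rank-$(n-1)$ vertex $[b_2,\dots,b_n]$ contains conjugates of $\<a_2,\dots,a_{n-1}\>$ and $\<a_3,\dots,a_n\>$; your preliminary normalisation $b_i=a_1^{k_i}a_ia_1^{-k_i}$ is correct but unnecessary for this case. For $n=3$ your argument genuinely diverges from the paper's. The paper applies Lemma~\ref{lift}(2) to write the missing vertex as $[\,a_2^\gamma,a_3\,]$ and then invokes a sticks argument (in the spirit of Example~\ref{ex:bad}) to force $\gamma\in\<a_1\>$. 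You instead exploit the shared rank-$2$ vertices $[a_1,a_i]$, promote conjugacy to equality via malnormality, and read off $b_i=a_1^{k_i}a_ia_1^{-k_i}$ directly from Lemma~\ref{complement}. Your approach is more elementary---it avoids the sticks machinery entirely---at the cost of working with the rank-$2$ rather than the rank-$(n-1)$ structure; the paper's argument is shorter because it treats both cases uniformly through Lemma~\ref{lift}.
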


\begin{proof} The rank $n-1$ factor $V$ that $\D_1$ has in place of $\<a_2,\cdots,a_n\>$
 contains, up to conjugation, both
$\<a_2,\cdots,a_{n-1}\>$ and $\<a_3,\cdots,a_n\>$. For $n>3$ this implies
that $V$ is conjugate to $\<a_2,\cdots,a_n\>$, by Lemma \ref{lift}.

If $n=3$ then $V$ must have the form $V=\<a_2^\gamma,a_3\>$, whose core graph has
two loops, labeled $a_2$ and $a_3$ connected by an arc labeled
$\gamma$. 
Arguing with the existence of sticks (as in Example \ref{ex:bad}) we see that $\gamma$ must be a power of $a_1$.
\end{proof}

\noindent{\bf{End of the Proof of Theorem \ref{thm2}}.}  
Given an automorphism $\Phi$ of $\OF_n$, with $n\ge 3$, we compose
it with an element of ${\rm{Aut}}(F_n)$ so as to assume that it leaves  a standard apartment
$\Delta=\Delta(a_1,\dots,a_n)$ invariant. We use Proposition \ref{o:fix-sticks} to compose $\Phi$ with a further
element of ${\rm{Aut}}(F_n)$ so that it fixes $\Delta$ and all of its sticks. Lemma \ref{o:l2} then
tells us that, after composing with $\iota$
if necessary, $\Phi$ fixes the supersticks of  $\Delta$. We will be done if we can argue that
this adjusted $\Phi$ fixes every standard apartment that is Nielsen adjacent to $\Delta$ and fixes all the sticks
and supersticks of such an apartment. 

Without loss of generality we may assume that the Nielsen transformation is $a_1\mapsto a_1a_2$.
Consider  $\Delta_\lambda = \Delta(a_1a_2,a_2,\dots,a_n\}$.  
Every rank 1 vertex of $\Delta_\lambda$ is a vertex or stick of $\Delta$, and all of the faces that
do not include the vertex $[a_1a_2]$ are fixed as they lie in $\Delta$. 
Proceeding by induction on the rank we may 
assume that every vertex except $V=\<a_1a_2,a_3,\cdots,a_n\>$ is fixed.  
It then follows from Lemma \ref{one off} that $V$ is also fixed.
All the sticks of this apartment except for one (namely
$a_1a_2a_2$) are either vertices, sticks or supersticks of
$\Delta$, so all of the sticks are fixed. It follows from Lemma \ref{o:l2} that 
$\Phi$ fixes all of the supersticks of $\Delta_\lambda$ or none of them (because $\iota$
acts without fixed points on the set of supersticks). But there is one
that we know it does fix, namely $[a_1a_2^2a_3]$, because this is the only superstick of $\Delta_\lambda$
that is carried by the rank 2 vertex $[a_2, a_1a_3]$, and this is one of the midpoint vertices $[M]$ that
Addendum \ref{addend} tells us is fixed by $\Phi$. This completes the proof.
\qed

\section{Fakery in every rank}\label{s:fakes}

In this section we underscore the subtlety of recognising  standard apartments by describing a family of fake apartments in $\AF_n$ and $\OF_n$. This family shows that there exist fake apartments in $\OF_n$ with the property that each of their rank $(n-1)$ faces is standard and each of their rank 1 vertices is antipodal to the barycentre of the opposite face.

We consider a family of rank $n$
subgroups $H<F_n$ for which $\core_*(H)$ is obtained from the rose for
$\<a_1,\dots,a_{n-1}\>$ by connecting it to a loop labelled $a_n$ with a bridge labelled by a word of a particular form.
The words that we want are defined recursively: 
$$
W_0 := a_n  \text{ and }\  W_{k+1}:=W_k a_{k+1} W_k^{-1}.
$$
For example, 
$ W_2 = (a_na_1a_n^{-1})\, a_2 (a_na_1^{-1}a_n^{-1})$. Define
$$ H := \<a_1,\dots, a_{n-1}, W_{n-1}a_nW_{n-1}^{-1}\>.$$

\begin{lemma}\label{l:fakery}
For  
$j\le n-1$, the subgroup $V_j<F_n$ generated by $W_{n-1}a_nW_{n-1}^{-1}$ 
and $A_j=\{a_i \mid i\le n-1, \, i\neq j\}$ 
is a rank $(n-1)$ free factor antipodal to $\<W_{j-1} a_j W_{j-1}^{-1}\>$.
\end{lemma}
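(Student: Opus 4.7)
Extend the recursion by setting $W_n := W_{n-1} a_n W_{n-1}^{-1}$ and let $B_j := A_j \cup \{W_n, W_j\}$. The defining recursion gives $W_{j-1} a_j W_{j-1}^{-1} = W_j$, so the rank-$1$ factor claimed to be antipodal to $V_j$ is $\<W_j\>$. The plan is to prove both parts of the lemma at once by showing that $B_j$ is a basis of $F_n$: the first $n-1$ elements of $B_j$ are the given generators of $V_j$, so once $B_j$ is known to be a basis, those elements form a basis of $V_j$ (making it a free factor of rank $n-1$), and adjoining the last element realizes $V_j \ast \<W_j\> = F_n$, which is precisely antipodality.

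Since $|B_j|=n$, and since any $n$-element generating set of $F_n$ is a basis (a classical consequence of Hopfianness of $F_n$), it suffices to exhibit $a_1,\ldots,a_n$ as elements of $\<B_j\>$. The key observation is that the recursion $W_{k+1}=W_k a_{k+1} W_k^{-1}$ is \emph{reversible}: given $W_k$ and $a_{k+1}$ one can form $W_{k+1}$, and given $W_k$ and $W_{k+1}$ one can solve $a_{k+1} = W_k^{-1} W_{k+1} W_k$. I would exploit this in two sweeps. \emph{Upward sweep:} starting from $W_j\in B_j$ and using $a_{j+1},\ldots,a_{n-1}\in A_j$, inductively form $W_{j+1},\ldots,W_{n-1}$ inside $\<B_j\>$; combining $W_{n-1}$ with $W_n\in B_j$ yields $a_n = W_{n-1}^{-1} W_n W_{n-1}\in\<B_j\>$. \emph{Downward sweep:} starting from $W_0=a_n$ (now available) and using $a_1,\ldots,a_{j-1}\in A_j$, inductively form $W_1,\ldots,W_{j-1}$ inside $\<B_j\>$; combining $W_{j-1}$ with $W_j\in B_j$ yields $a_j = W_{j-1}^{-1} W_j W_{j-1}\in\<B_j\>$. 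So every $a_i$ lies in $\<B_j\>$, completing the proof.

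There is no genuine obstacle here; the argument is forced by the recursive definition of the $W_k$. The only bookkeeping concerns the extremal cases: if $j=1$ the downward sweep is empty and one recovers $a_1$ directly from $W_1 = W_j$ and $a_n=W_0$; if $j=n-1$ the upward sweep is empty and one recovers $a_n$ from $W_n$ and $W_{n-1}=W_j$. In both cases the argument goes through unchanged.
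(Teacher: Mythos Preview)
Your proof is correct and takes a genuinely different route from the paper's. The paper argues graphically: it draws $\core_*(V_j)$ as a rose on $A_j$ joined by a bridge labeled $W_{n-1}$ to a loop labeled $a_n$, attaches the lollipop $\core_*(W_{j-1}a_jW_{j-1}^{-1})$ at the basepoint, and then tracks an explicit sequence of Stallings folds that collapses the whole picture to the rose $R_n$. Your argument is purely algebraic: you notice that the recursion makes $W_{j-1}a_jW_{j-1}^{-1}=W_j$ and that it is reversible, so the set $B_j=A_j\cup\{W_n,W_j\}$ generates $F_n$ and hence (by Hopfianness) is a basis, from which both conclusions drop out immediately. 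Your approach is shorter, avoids the somewhat delicate bookkeeping of the folding sequence, and makes transparent why the specific form of the $W_k$ is exactly what is needed. The paper's approach has the virtue of fitting the core-graph toolkit used throughout the article and of making the geometry of the fake apartment visible, but for this particular lemma your argument is the cleaner one.
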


\begin{proof} We shall refer to the arc of $\core_*(V_k)$ joining the basepoint $*$ to the loop labeled $a_n$
as the {\em bridge}; it is labeled $W_{n-1}$. Let $p$ be the vertex on the bridge that is the terminus of the
path from $*$ labeled by the prefix $W_{j-1}\prec W_{n-1}$. We attach the lollipop $\core_*(W_{j-1} a_j W_{j-1}^{-1})$
to $\core_*(V_k)$ at $*$ and start folding. The  stalk of the lollipop folds entirely into 
$\core_*(V_k)$, at which point we have the graph obtained from $\core_*(V_j)$  by attaching
a loop labeled $a_j$ at $p$. 
The edge $e$ immediately beyond $p$ then folds around this loop and the folding continues as the arc
beyond $e$ that is
labeled $W_{j-1}^{-1}$ folds into the section of the bridge joining $p$ to $*$ -- at this point the folded graph is the wedge of the rose for $A_j$ and
two   lollipops, one with  stalk labeled $W_{j-1}$ and petal $a_j$, 
 and one with stalk $a_{j+1} W_j^{-1} a_{j+2} W_j\dots$ and petal
$a_n$. The initial edge on the stalk of the second lollipop folds into the rose, then the arc labeled $W_j^{-1}$ traces around the first lollipop, then the edge labeled $a_{j+2}$ folds into the rose, {\em etc.}

The folding continues until the entire stalk of the second
lollipop has folded into the wedge of the rose and the first lollipop. At this stage, the loop labeled $a_n$ shares its
vertex with the rose for $A_j$, and the stalk of the first lollipop folds into the rank-$(n-1)$ rose that they form.
Thus we obtain the rose $R_n$.
\end{proof}

\begin{proposition} The proper subsets of $\{a_1,\dots,a_{n-1},\, W_{n-1}a_nW_{n-1}^{-1}\}$ generate
free factors of $F_n$, and the subcomplex
 $\Delta < \AF_n$ that they
 span is an apartment with the following properties:
\begin{enumerate} 
\item every codimension-1 face is standard; 
\item the apartment is fake.
\end{enumerate}
The image of $\Delta$ in $\OF_n$ is also fake, and
\begin{enumerate} 
\item[(3)] the barycentre of each codimension-1 face is antipodal to the rank 1 factor opposite it. 
\end{enumerate} 
\end{proposition}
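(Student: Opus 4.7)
The plan is to extract most of the proposition directly from Lemma \ref{l:fakery} and to verify fakeness by examining Stallings graphs; the hard part is fakeness in $\OF_n$.

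First, observe that each proper non-empty subset of $\{a_1,\dots,a_{n-1},W_{n-1}a_nW_{n-1}^{-1}\}$ generates a free factor of $F_n$: subsets of $\{a_1,\dots,a_{n-1}\}$ yield the obvious standard free factors, and subsets containing $W_{n-1}a_nW_{n-1}^{-1}$ yield free factors of one of the $V_j$ of Lemma \ref{l:fakery}, which is itself a rank-$(n-1)$ free factor. So $\Delta$ is an apartment. For claim (1), the codim-1 face $\Delta[a_1,\dots,a_{n-1}]$ is visibly standard, and for each $j\in\{1,\dots,n-1\}$, Lemma \ref{l:fakery} gives $V_j*\langle W_{j-1}a_jW_{j-1}^{-1}\rangle=F_n$, so $S_j\cup\{W_{j-1}a_jW_{j-1}^{-1}\}$ is a basis of $F_n$ and the corresponding face is a standard face of the associated standard apartment. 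For claim (3), since $W_{j-1}a_jW_{j-1}^{-1}$ is conjugate to $a_j$, the same lemma yields $[V_j]\perp[a_j]$ in $\OF_n$; the face $\Delta[a_1,\dots,a_{n-1}]$ is trivially antipodal to the opposite vertex $[W_{n-1}a_nW_{n-1}^{-1}]=[a_n]$.

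For claim (2), it suffices to show $H\ne F_n$. Since $W_{n-1}$ starts with $a_n$ and ends with $a_n^{-1}$, the middle pair $a_n^{-1}\!\cdot\! a_n$ of $W_{n-1}a_nW_{n-1}^{-1}$ cancels, and the resulting word is conjugate to $a_n$, so its cyclic reduction is $a_n$. In particular, the Stallings graph $\core_*(\langle W_{n-1}a_nW_{n-1}^{-1}\rangle)$ is a lollipop with loop labelled $a_n$ and with a stalk of length $2^n-2$ whose first edge (from the basepoint) is labelled $a_n$. Wedging this lollipop at the basepoint with the rose $R_{n-1}$ for $\langle a_1,\dots,a_{n-1}\rangle$ introduces no new folds, because $R_{n-1}$ has no $a_n$-edge and the stalk is a reduced word. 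Hence $\core_*(H)$ is fully folded with $2^n-1>1$ vertices, so it is not $R_n$ and $H\ne F_n$. Therefore $\{a_1,\dots,a_{n-1},W_{n-1}a_nW_{n-1}^{-1}\}$ is not a basis of $F_n$, and $\Delta$ is not standard in $\AF_n$.

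The main obstacle is to show the image of $\Delta$ in $\OF_n$ is also fake, since the rank-$1$ vertices of $\Delta$ in $\OF_n$ agree with those of the standard apartment $\Delta(a_1,\dots,a_n)$ (as $W_{n-1}a_nW_{n-1}^{-1}$ is conjugate to $a_n$). The plan is to apply Proposition \ref{apartments} when $n=3$ or Lemma \ref{l:build-up} when $n\ge 4$ and show that the potential-stick / condition-(3) requirement fails at each $V_j$ with $j<n$: no rank-$1$ vertex adjacent to $V_j$ is antipodal to all the other rank-$(n-1)$ vertices of $\Delta$. The crux is that any primitive element $u$ of $V_j=\langle a_i:i<n,\,i\ne j\rangle * \langle a_n^{W_{n-1}}\rangle$ whose $F_n$-cyclic form contains exactly one $a_n^{\pm1}$ must, because of the rigid cancellation pattern imposed by the bridge $W_{n-1}$, be conjugate in $F_n$ to $a_n$ itself; but $[a_n]$ already lies in each $V_{j'}$ with $j'<n$, so it cannot be antipodal to $[V_{j'}]$, and condition (3) fails.
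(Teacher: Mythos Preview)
Your treatment of items (1), (2) and (3) matches the paper's: both draw (1) and (3) directly from Lemma~\ref{l:fakery}, and your verification that $H\ne F_n$ by counting vertices in $\core_*(H)$ is a clean way to get (2). (The paper does not argue (2) separately; it lets fakeness in $\OF_n$ imply fakeness in $\AF_n$.)

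Where you diverge is in proving fakeness in $\OF_n$. The paper gives a short structural argument: in any \emph{standard} apartment, representatives $A,B$ of two distinct codimension-$1$ barycentres satisfying $A\cap B\ne 1$ must generate $F_n$; but here the given lifts $V_1,\dots,V_{n-1},\langle a_1,\dots,a_{n-1}\rangle$ all sit inside $H\ne F_n$ while meeting pairwise nontrivially, so the image cannot be standard. Your route instead shows that the hypothesis (3) of Lemma~\ref{l:build-up} (or the stick condition of Proposition~\ref{apartments} when $n=3$) fails at each $[V_j]$: since $\core(V_j)$ is a rose for $\{a_i:i<n,\ i\ne j\}$ joined by the bridge $U$ (the reduced stalk, which carries $2^{n-1}-1\ge 1$ letters $a_n^{\pm1}$) to a single $a_n$-loop, every immersed loop either avoids $a_n$, is a power of the $a_n$-loop, or crosses the bridge at least twice and so carries at least $2^n-2\ge 2$ occurrences of $a_n$; hence the only $[u]$ adjacent to $[V_j]$ and antipodal to $[\langle a_1,\dots,a_{n-1}\rangle]$ is $[a_n]$, which is not antipodal to $[V_{j'}]$ for $j'\ne j$. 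This is correct, and in fact recapitulates the last paragraph of the proof of Lemma~\ref{l:build-up}. The paper's generation argument is shorter and avoids re-running that analysis; your argument has the virtue of pinpointing exactly which hypothesis of Lemma~\ref{l:build-up} the example violates, which is the purpose of Section~\ref{s:fakes}. You should spell out the loop trichotomy explicitly rather than invoking a ``rigid cancellation pattern'', and note that $\core_*(V_j)$ really is the wedge described (no extra folding occurs since the stalk begins with $a_n$ and $a_n\notin A_j$).
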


\begin{proof} Lemma \ref{l:fakery} assures us that each subset of cardinality $k<n$ generates a free factor of rank $k$, so 
$\Delta$ is indeed an apartment and (1) holds. 
The lemma also tells us that, in $\OF_n$, the codimension-1 face opposite the vertex $[a_j]$ is antipodal to $[a_j]=[W_{j-1} a_j W_{j-1}^{-1}]$, so (3) holds.

In a standard apartment of $\OF_n$, if $[A], [B]$ are the barycentres of distinct codimension-1 faces and $A,B$ are
representatives  with $A\cap B\neq 1$, then $A\cup B$ will generate $F_n$. But in the image of
$\Delta$, such representatives will only generate
$H \neq F_n$, so the apartment is fake.
\end{proof}

\bibliographystyle{alpha}
\bibliography{refBB} 

\end{document}